\documentclass{article}

\usepackage[utf8]{inputenc}
\usepackage[T1]{fontenc}
\usepackage{hyperref, url, booktabs, enumitem}
\usepackage{amsfonts, nicefrac, microtype, lipsum, graphicx}
\usepackage{amsmath, amssymb, amsthm}
\usepackage[all]{xy}
\SelectTips{cm}{12}

\newtheorem{lemma}{Lemma}
\newtheorem{prop}{Proposition}
\newtheorem{theorem}{Theorem}
\newtheorem*{theorem*}{Theorem}

\newcommand{\dotminus}{\mathbin{\dot{-}}}
\newcommand{\dotoplus}{\mathbin{\dot{\oplus}}}

\DeclareMathOperator\mat{M}

\DeclareMathOperator\unit{U}

\DeclareMathOperator\gunit{GU}
\DeclareMathOperator\punit{PU}
\DeclareMathOperator\glin{GL}
\DeclareMathOperator\slin{SL}
\DeclareMathOperator\pglin{PGL}
\DeclareMathOperator\gglin{GGL}
\DeclareMathOperator\symp{Sp}
\DeclareMathOperator\psymp{PSp}
\DeclareMathOperator\gsymp{GSp}
\DeclareMathOperator\orth{O}
\DeclareMathOperator\sorth{SO}
\DeclareMathOperator\psorth{PSO}
\DeclareMathOperator\gorth{GO}
\DeclareMathOperator\gsorth{GSO}
\DeclareMathOperator\spin{Spin}

\DeclareMathOperator\clif{Cl}
\DeclareMathOperator\ofalin{AL}
\DeclareMathOperator\ofasymp{ASp}
\DeclareMathOperator\ofaorth{AO}
\DeclareMathOperator\Cent{C}

\DeclareMathOperator\tr{tr}

\DeclareMathOperator\Ker{Ker}
\DeclareMathOperator\hypspace{H}
\DeclareMathOperator\sesq{Sesq}
\DeclareMathOperator\quadr{Quad}
\newcommand{\eps}{\varepsilon}

\newcommand{\inv}[1]{\!\;\overline{\!\!\:#1\vphantom !\!\!\:}\;\!}
\newcommand{\oppose}{{\mathrm{op}}}

\DeclareMathOperator{\rep}{rep}
\DeclareMathOperator{\id}{id}
\DeclareMathOperator{\Hom}{Hom}
\DeclareMathOperator{\End}{End}
\DeclareMathOperator{\Aut}{Aut}
\DeclareMathOperator{\Heis}{Heis}
\DeclareMathOperator{\Dickson}{D}
\DeclareMathOperator{\Spec}{Spec}

\newcommand{\up}[2]{{^{#1}\!{#2}}}

\makeatletter
\newcommand{\bigperp}{\mathop{\mathpalette\bigp@rp\relax}\displaylimits}
\newcommand{\bigp@rp}[2]{\vcenter{\m@th\hbox{\scalebox{\ifx#1\displaystyle2.1\else1.5\fi}{$#1\perp$}}}}
\makeatother

\newcommand{\rMod}[2][]{\mathbf{Mod}_{#1}\text{-}#2}

\title{Twisted forms of classical groups}

\author{
  Egor Voronetsky
  \thanks{Research is supported by the Russian Science Foundation grant 19-71-30002.} \\
  Chebyshev Laboratory, \\
  St. Petersburg State University, \\
  14th Line V.O., 29B, \\
  Saint Petersburg 199178 Russia \\
}

\begin{document}
\maketitle

\begin{abstract}
We give a unified description of twisted forms of classical reductive groups schemes. Such group schemes are constructed from algebraic objects of finite rank, excluding some exceptions of small rank. These objects, augmented odd form algebras, consist of $2$-step nilpotent groups with an action of the underlying commutative ring, hence we develop basic descent theory for them. In addition, we describe classical isotropic reductive groups as odd unitary groups up to an isogeny.
\end{abstract}

\section{Introduction}

The classical groups $\glin(n, K)$, $\symp(2n, K)$, and $\orth(n, K)$ are the automorphism groups of classical forms over finite free $K$-modules. Anthony Bak in \cite{Bak} gave a unified approach to all these groups and some of their variations using unitary groups. Even more general definitions are given in \cite{OddDefPetrov} by Victor Petrov and later in our work \cite{OddStrucVor}.

There are well-known constructions of all twisted forms of some of the classical groups. Twisted forms of $\glin(n, -)$ and $\symp(2n, -)$ are the unitary groups of certain Azumaya algebras with involutions. If $2$ is invertible, then the same is true for the special orthogonal group schemes $\sorth(n, -)$. For the group scheme $\sorth(2n, -)$ it is possible to add a new structure to the Azumaya algebra, a quadratic pair, to get the desired description of twisted forms. See \cite{BaptisteFasel, InvBook, DefTits, DefWeil} for details. Twisted forms of the group scheme $\sorth(2n + 1, -)$ are easily described in terms of modules with a quadratic form and a volume form, since $\Aut(\sorth(2n + 1, -)) \cong \sorth(2n + 1, -) = \orth(2n + 1, -) \cap \slin(2n + 1, -)$.

In \cite{OFADefVor} we discovered odd form algebras. These objects are similar to a module with a hermitian form and an odd form parameter in the sense of Petrov, but they are invariant under quadratic analogue of Morita equivalence (see \cite{OddStrucVor}). An odd form algebra consists of a ring $R$ and a $2$-step nilpotent group $\Delta$ (odd form parameter) with various operations satisfying a lot of axioms. It is possible to construct odd form algebras over a commutative ring $K$ with the unitary groups $\glin(n, K)$, $\symp(2n, K)$, and $\orth(n, K)$, their rings are the usual Azumaya algebras with involutions (in the odd orthogonal case only if $2 \in K^*$).

These odd form algebras do not naturally form fppf-sheaves, we even cannot define useful scalar extension for odd form parameters in general. Hence we prefer to work with odd form algebras were the odd form parameter has a fixed nilpotent filtration $0 \leq \mathcal D \leq \Delta$ satisfying a couple of axioms. For these objects, augmented odd form algebras, we develop faithfully flat descent in the required generality.

Roughly speaking, our main result is the following: for every twisted form of the group schemes $\glin(n, -)$, $\slin(n, -)^m$, $\symp(2n, -)^m$, $\sorth(n, -)^m$, $\pglin(n, -)^m$, $\psymp(2n, -)^m$, $\psorth(n, -)^m$, $\gsymp(2n, -)$, $\gsorth(n, -)$, and $\spin(n, -)^m$ there is a construction using certain augmented odd form $K$-algebras. Moreover, there is a nice description of isotropic rank in terms of augmented odd form algebras, see \cite{PetrovStavrova, Stavrova} for a definition of isotropic reductive groups.

The only exceptions to our result either have non-affine automorphism group scheme, or are twisted forms of $\spin(8, -)^m$ or $\psorth(8, -)^m$. In the second case we do not classify all the twisted forms because of the triality, though the twisted forms with the isotropic rank at least $3$ still can be described.

The paper is organized as follows. In sections $2$ and $3$ we give the necessary definitions of quadratic modules and odd form algebras. In section $4$ we discuss descent for $2$-step nilpotent groups with suitable actions of a base commutative ring $K$ (the so-called $2$-step nilpotent $K$-modules). Section $5$ contains the constructions of the split classical odd form algebras. In sections $6$--$8$ we prove the main results.

\section{Quadratic forms}

All rings in this paper are associative, but not necessarily unital. All commutative rings are unital and homomorphisms between them preserve the identity, similarly for commutative algebras over a given commutative ring. We also consider only unital modules and bimodules over unital associative rings. The center of a ring $R$ is denoted by $\Cent(R)$, similarly for the center of a group. For any associative ring $R$ its multiplicative semigroup is denoted by $R^\bullet$ (it is a monoid if $R$ is unital). If $R$ is unital, then $R^*$ denotes its group of invertible elements.

If $R$ is a non-unital $K$-algebra for a commutative ring $K$, then $R \rtimes K$ is a unital $K$-algebra with an ideal $R$. Here $R \rtimes K = R \oplus K$ as a $K$-module and the multiplication is given by $(r \oplus k) (r' \oplus k') = (rr' + rk' + r'k) \oplus kk'$.

We use the conventions $\up gh = ghg^{-1}$ and $[g, h] = ghg^{-1}h^{-1}$, where $g$ and $h$ are elements of some group. In nilpotent groups we usually write the group operation as $\dotplus$, then $\dotminus u$ is the inverse, $\dot 0$ is the neutral element, $u \dotminus v = u \dotplus (\dotminus v)$.

If $\{A_i\}_{i \in I}$ is a linearly ordered family of subsets of a nilpotent group and $\dot 0 \in A_i$ for all $i$, then
$$\sum^\cdot_{i \in I} A_i = \bigl\{\sum_{i \in I}^\cdot a_i \mid a_i \in A_i, \text{almost all } a_i = \dot 0\bigr\}.$$
If, moreover, the summands $a_i$ are uniquely determined by their sum, then we write $\bigoplus_{i \in I}^\cdot A_i$ instead of $\sum_{i \in I}^\cdot A_i$. Note that $\bigoplus_{i \in I}^\cdot A_i$ makes sense for any linearly ordered family of pointed sets $A_i$, it is the set of finite formal sums.

For every ring $R$ the opposite ring is $R^\oppose = \{r^\oppose \mid r \in R\}$ with operations $r^\oppose + {r'}^\oppose = (r + r')^\oppose$ and $r^\oppose {r'}^\oppose = (r' r)^\oppose$. If $M_R$ is a right module over a unital ring $R$, then the opposite left $R^\oppose$-module is $M^\oppose = \{m^\oppose \mid m \in M\}$ with $m^\oppose + {m'}^\oppose = (m + m')^\oppose$ and $r^\oppose m^\oppose = (mr)^\oppose$. We define the opposite to a left module in the same way. There is a canonical isomorphism $(R^\oppose)^\oppose \cong R, (r^\oppose)^\oppose \mapsto r$ for every ring $R$ and similarly for modules.

We assume that the reader is familiar with reductive group schemes. The necessary results are given with references in SGA3 \cite{SGA3} and Brian Conrad's book \cite{Conrad}. All schemes over a given commutative ring $K$ are identified with the corresponding covariant functors on the category of commutative $K$-algebras.

Now we give slightly more natural definitions in comparison with \cite{OddStrucVor}.

A quadratic ring is a triple $(R, L, A)$, where $R$ is a unital ring, $L$ is an $R^\oppose$-$R$-bimodule, $A$ is a right $R^\bullet$-module (the action is denoted by $a \cdot r$), and there are additive maps $L \to L, l \mapsto \inv{\,l\,}$, $\varphi \colon L \to A$, and $\tr \colon A \to L$ such that
\begin{itemize}
\item $\inv{\inv{\,l\,}} = l$, $\inv{r^\oppose l r'} = {r'}^\oppose \inv{\,l\,} r$;
\item $\varphi(r^\oppose l r) = \varphi(l) \cdot r$, $\tr(a \cdot r) = r^\oppose \tr(a) r$;
\item $\tr(\varphi(l)) = l + \inv{\,l\,}$;
\item $\tr(a) = \inv{\tr(a)}$, $\varphi(l) = \varphi(\inv{\,l\,})$;
\item $a \cdot (r + r') = a \cdot r + \varphi(r^\oppose \tr(a) r') + a \cdot r'$.
\end{itemize}

In \cite{OddStrucVor} the ring $R$ itself has an anti-automorphism $r \mapsto r^*$ such that $r^{**} = \lambda r \lambda^{-1}$ for some fixed invertible element $\lambda$. The map $r \mapsto r^*$ is called a pseudo-involution with symmetry $\lambda$. In this case take $L = R$ as the $R^\oppose$-$R$-bimodule with $r_1^\oppose r_2 r_3 = r_1^* r_2 r_3$ and $\inv r = r^* \lambda$. The conditions on $A$, $\varphi$, and $\tr$ from the definition are precisely the conditions on quadratic structures from that paper.

Let $(R, L, A)$ be a quadratic ring. We say that $(M, B, q)$ is a quadratic module over $(R, L, A)$, if $M$ is a right $R$-module and $B \colon M \times M \to L$, $q \colon M \to A$ are maps such that
\begin{itemize}
\item $B$ is biadditive, $B(mr, m'r') = r^\oppose B(m, m') r'$ (i.e. $B$ induces a bimodule homomorphism $M^\oppose \otimes M \to R$);
\item $B(m, m') = \inv{B(m', m)}$;
\item $\tr(q(m)) = B(m, m)$, $q(mr) = q(m) \cdot r$;
\item $q(m + m') = q(m) + \varphi(B(m, m')) + q(m')$.
\end{itemize}
Here $B$ is called a hermitian form and $q$ is called a quadratic form.

In Bak's definition \cite{Bak} hermitian forms take values in the ring $R$ itself (using a pseudo-involution with symmetry) and quadratic forms take values in factor-groups of $R$ by so-called form parameters. It is possible to define form parameters in general following O. Loos \cite{Loos}. A quadratic ring $(R, L, A)$ is called even if $\varphi \colon L \to A$ is surjective, in this case $\Lambda = \Ker(\varphi)$ is called a form parameter. It is an $R^\bullet$-submodule of $L$ such that $\Lambda_{\mathrm{min}} \leq \Lambda \leq \Lambda_{\mathrm{max}}$, where $\Lambda_{\mathrm{min}} = \{l - \inv{\,l\,} \mid l \in L\}$ and $\Lambda_{\mathrm{max}} = \{l \in L \mid l + \inv{\,l\,} = 0\}$ (where $R^\bullet$ acts by $l \cdot r = r^\oppose l r$). Conversely, for every such $\Lambda$ the triple $(R, L, L / \Lambda)$ is an even quadratic ring. In even quadratic rings there is an identity $\varphi(\tr(a)) = 2a$ for all $a \in A$.

Now we give an alternative description of quadratic forms following Petrov's paper \cite{OddDefPetrov}. Let $(M, B, q)$ be a quadratic module over $(R, L, A)$. A Heisenberg group of $(M, B)$ is the set $\Heis(M) = M \times L$ with the group operation
$$(m, l) \dotplus (m', l') = (m + m', l - B(m, m') + l')$$
and the right $R^\bullet$-action
$$(m, l) \cdot r = (mr, r^\oppose l r).$$
Note that $\dot 0 = (0, 0)$ is the neutral element and $\dotminus (m, l) = (-m, -B(m, m) - l)$. An odd form parameter of $q$ is the subgroup
$$\mathcal L = \{(m, l) \mid q(m) + \varphi(l) = 0\} \leq \Heis(M).$$
Clearly, the odd form parameter $\mathcal L$ of a quadratic module $(M, B, q)$ over a quadratic ring $(R, L, A)$ is $R^\bullet$-invariant and $\mathcal L_{\mathrm{min}} \leq \mathcal L \leq \mathcal L_{\mathrm{max}}$, where
\begin{align*}
\mathcal L_{\mathrm{min}} &= \{(0, l - \inv{\,l\,}) \mid l \in L\},\\
\mathcal L_{\mathrm{max}} &= \{(m, l) \mid B(m, m) + l + \inv{\,l\,} = 0\}.
\end{align*}
The definitions of $\Heis(M)$, $\mathcal L_{\mathrm{min}}$, $\mathcal L_{\mathrm{max}}$ do not use $A$ and $q$. If $\mathcal L_{\mathrm{min}} \leq \mathcal L \leq \mathcal L_{\mathrm{max}}$ is arbitrary $R^\bullet$-invariant subgroup, then $(R, L, \Heis(M) / \mathcal L)$ is a quadratic ring with $\varphi(l) = (0, l) \dotplus \mathcal L$ and $\tr((m, l) \dotplus \mathcal L) = B(m, m) + l + \inv{\,l\,}$, also $\mathcal L$ is the odd form parameter of the quadratic form $q(m) = (m, 0) \dotplus \mathcal L$. If $(R, L, A)$ is even, then there is a short exact sequence $\dot 0 \to \Lambda \to \mathcal L \to M \to \dot 0$.

A quadratic ring $(R, L, A)$ is called regular (or non-degenerate), if $L$ is an invertible bimodule in the sense of Morita theory, i.e. $L_R$ is a projective generator in $\rMod R$ and $R^\oppose \cong \End_R(L_R)$. A quadratic module $(M, B, q)$ over a regular quadratic ring $(R, L, A)$ is called regular (or a quadratic space), if $M$ is finite projective $R$-module and $B$ induces an isomorphism $M^\oppose \to \Hom_R(M, L)$. For example, if $(R, L, A)$ is a regular quadratic ring and $P_R$ is a finite projective module, then the hyperbolic space $\hypspace(P) = \Hom_R(P, L)^\oppose \oplus P$ is a regular quadratic module over $(R, L, A)$ with the hermitian form
$$B(f^\oppose \oplus p, {f'}^\oppose \oplus p') = f(p') + \inv{f'(p)}$$
and the quadratic form
$$q(f^\oppose \oplus p) = \varphi(f(p)).$$

Let $(R, L, A)$ be a quadratic ring. For any right $R$-module $M$ let $\sesq(M)$ be the group of all biadditive maps $Q \colon M \times M \to L$ such that $Q(mr, m'r') = r^\oppose Q(m, m') r'$, and $\quadr(M)$ be the group of pairs $(B, q)$ such that $(M, B, q)$ is a quadratic module over $(R, L, A)$. The category of right $R$-modules $\rMod R$ is an additive form category in the sense of Marco Schlichting, see \cite{Schlichting}. The duality is $M^\sharp = \Hom_R(M, L)^\oppose$, the natural transformation is $\mathrm{can}_M \colon M \to \Hom_R(\Hom_R(M, L)^\oppose, L)^\oppose, m \mapsto (f^\oppose \mapsto \inv{f(m)})^\oppose$, then $\Hom_R(M, M^\sharp) \cong \sesq(M)$ and $\sigma \colon f \mapsto f^\sharp \circ \mathrm{can}_M$ corresponds to the involution on $\sesq(M)$. The functor $Q$ is $Q(M) = \quadr(M)$, the transfer map is $\varphi \colon \sesq(M) \to \quadr(M)$, and the restriction map is $\tr \colon \quadr(M) \to \sesq(M)$. Schlichting's symmetric and quadratic forms correspond to our hermitian and quadratic forms. If $(R, L, A)$ is regular, then the subcategory of finite projective $R$-modules has strong duality, Schlichting's non-degenerate quadratic forms correspond to our regular quadratic forms, and Schlichting's hyperbolic spaces correspond to the our ones.

A unitary group of a quadratic module $(M, B, q)$ over a quadratic ring $(R, L, A)$ is
$$\unit(M, B, q) = \{g \in \Aut_R(M) \mid B(gm, l, gm') = B(m, l, m'), q(a, gm) = q(a, m)\}.$$
In other words, $\unit(M, B, q)$ is the group of automorphisms of $(M, B, q)$ as a quadratic module.

An even quadratic ring $(R, L, A)$ is called an even quadratic algebra over a commutative ring $K$ if $R$ is a $K$-algebra, both $K$-module structures on $L$ coincide, the involution on $L$ is $K$-linear, and the form parameter $\Lambda = \Ker(\varphi)$ is a $K$-submodule. In this case $A$ itself is a $K$-module. If $(R, L, A)$ is an even quadratic $K$-algebra and $E / K$ is a commutative ring extension, then $(E \otimes_K R, E \otimes_K L, E \otimes_K A)$ is an even quadratic $E$-algebra. If $(M, B, q)$ is a quadratic module over $(R, L, A)$, then $E \otimes_K M$ is a quadratic module over $(E \otimes_K R, E \otimes_K L, E \otimes_K A)$ with the obvious hermitian form $B_E$ and the quadratic form given by $q_E(e \otimes m) = e^2 \otimes q(m)$.

Now we describe classical hermitian and quadratic forms in our notation. Fix a commutative ring $K$. By the above, even quadratic algebras over $K$ and quadratic modules over them give fppf sheaves on the category of affine schemes over $K$ using extension of scalars.

On the $K$-algebra $R = K \times K$ there is an involution $\inv{(x, y)} = (x, y)^* = (y, x)$ with the symmetry $\lambda = 1$. We say that the regular even quadratic $K$-algebra $(R, R, K)$ with $x \cdot (y, z) = yxz$, $\varphi(x, y) = x + y$, and $\tr(x) = (x, x)$ is classical of linear type. Any regular quadratic module over $(R, R, K)$ is isomorphic to $\hypspace(P \times 0)$ for some finite projective $K$-module $P$, the unitary group of such a quadratic module is isomorphic to $\glin(P)$. Zariski locally every finite projective module splits, i.e. is isomorphic to $K^\ell$ for some $\ell \geq 0$, and its hyperbolic space is $\bigoplus_{i = -\ell}^{-1} K e_i \times \bigoplus_{i = 1}^\ell K e_i$. The hermitian form is given by $B(e_i, e_j) = 0$ for $i \neq -j$, $B(e_i, e_{-i}) = (1, 0)$ for $i > 0$, $B(e_i, e_{-i}) = (0, 1)$ for $i < 0$, and the quadratic form is given by $q(e_i) = 0$.

The regular even quadratic $K$-algebra $(K, K, 0)$ with $\inv k = -k$ is called classical of symplectic type. Quadratic modules over $(K, K, 0)$ are called symplectic $K$-modules. In other words, a symplectic $K$-module $M$ has a bilinear form $B \colon M \times M \to K$ satisfying $B(m, m) = 0$. Locally in the Zariski topology every regular symplectic module splits, i.e. it is isomorphic to $\bigoplus_{1 \leq |i| \leq \ell} K e_i$ with the form $B(e_i, e_{-i}) = \eps_i$, $B(e_i, e_j) = 0$ for $i \neq -j$ (here $\eps_i = 1$ for $i > 0$ and $\eps_i = -1$ for $i < 0$). Any split symplectic module is hyperbolic.

The regular even quadratic $K$-algebra $(K, K, K)$ with $\inv k = k$, $k \cdot x = kx^2$, $\varphi(k) = k$, $\tr(k) = 2k$ is called classical of orthogonal type. Quadratic modules over $(K, K, K)$ are called classical quadratic $K$-modules. In other words, a classical quadratic $K$-module is a $K$-module $M$ with a map $q \colon M \to K$ such that $q(mk) = q(m) k^2$ and $B(m, m') = q(m + m') - q(m) - q(m')$ is $K$-bilinear. Locally in the \'etale topology every regular classical quadratic module of even rank splits, i.e. is isomorphic to $\bigoplus_{1 \leq |i| \leq \ell} K e_i$ with $q(e_i) = 0$, $B(e_i, e_{-i}) = 1$, $B(e_i, e_j) = 0$ for $i \neq -j$. Such a quadratic module is hyperbolic.

If $(P, q)$ is a classical quadratic module with $P = K^{2\ell + 1}$, then there is a universal polynomial $\mathrm{hdet}(q)$ with integer coefficients on the variables $q(e_i)$ and $B(e_i, e_j)$ for $i < j$ such that $2\, \mathrm{hdet}(q) = \det B(e_i, e_j)$. See \cite[section IV.3]{Knus} or \cite[Appendix C]{Conrad} for details. The form $q$ is called semi-regular if $\mathrm{hdet}(q)$ is invertible, this property does not depend on the basis of $P$. A classical quadratic module $(P, q)$ is called semi-regular if $P$ is finite projective of odd rank and $q$ is semi-regular Zariski locally. Locally in the fppf topology every semi-regular classical quadratic module splits, i.e. is isomorphic to $\bigoplus_{i = -\ell}^\ell K e_i$ with $q(e_i) = 0$ for $i \neq 0$, $q(e_0) = 1$, $B(e_i, e_{-i}) = 1$ for $i \neq 0$, $B(e_i, e_j) = 0$ for $i \neq -j$, $B(e_0, e_0) = 2$. Such a quadratic module is an orthogonal sum of a submodule of rank $1$ and a hyperbolic subspace.

\section{Odd form rings}

Recall the definitions from \cite{UnitK2, OFADefVor}. An odd form ring is a pair $(R, \Delta)$, where $R$ is a non-unital associative ring with an involution $a \mapsto \inv a$ (i.e. an anti-automorphism of order $2$), $\Delta$ is a group, the semi-group $R^\bullet$ acts on $\Delta$ from the right by endomorphisms (the group operation of $\Delta$ is denoted by $\dotplus$ and the action is denoted by $u \cdot a$), and there are maps $\phi \colon R \to \Delta$, $\pi \colon \Delta \to R$, and $\rho \colon \Delta \to R$ such that for all $u, v \in \Delta$ and $a, b \in R$
\begin{itemize}
\item $\pi(u \dotplus v) = \pi(u) + \pi(v)$, $\pi(u \cdot a) = \pi(u) a$;
\item $\phi(a + b) = \phi(a) \dotplus \phi(b)$, $\phi(b) \cdot a = \phi(\inv aba)$;
\item $\rho(u \dotplus v) = \rho(u) - \inv{\pi(u)} \pi(v) + \rho(v)$, $\rho(u \cdot a) = \inv a \rho(u) a$;
\item $\rho(u) + \inv{\rho(u)} + \inv{\pi(u)} \pi(u) = 0$;
\item $\pi(\phi(a)) = 0$, $\rho(\phi(a)) = a - \inv a$;
\item $[u, v] = \phi(-\inv{\pi(u)} \pi(v))$;
\item $\phi(a) = \dot 0$ if $a = \inv a$;
\item $u \cdot (a + b) = u \cdot a \dotplus \phi(\inv{\,b\,} \rho(u) a) \dotplus u \cdot b$.
\end{itemize}
An odd form ring $(R, \Delta)$ is called unital if $R$ is unital and $u \cdot 1 = u$ for all $u \in \Delta$. An odd form ring $(R, \Delta)$ is called special if $(\pi, \rho) \colon \Delta \to R \times R$ is injective.

If $(R, \Delta)$ is an odd form ring, then $\Delta$ is a $2$-step nilpotent group. There are identities $\rho(\dot 0) = 0$, $\rho(\dotminus u) = \inv{\rho(u)}$, and $u \cdot 0 = \dot 0$ for $u \in \Delta$. If $R$ is a unital ring with an involution and $\Delta \leq \Heis(R)$ is an odd form parameter with respect to the hermitian form $B(a, b) = \inv ab$, then $(R, \Delta)$ is a special unital odd form ring, where $\pi$ and $\rho$ are the first and the second projections of the Heisenberg group, and $\phi(a) = (0, a - \inv a) \in \Delta$. Conversely, every special unital odd form ring $(R, \Delta)$ is of this type up to an identification of $\Delta$ with its image in $R \times R$.

Let $(R, \Delta)$ be an odd form ring. It is an odd form subring of the unital odd form ring $(R \rtimes \mathbb Z, \Delta)$ (and an odd form ideal as in \cite{UnitK2, OFADefVor}). In order to define its unitary group we denote the components of $g \in R \times \Delta$ by $\beta(g)$ and $\gamma(g)$ as in \cite{UnitK2, OFADefVor}. Also, $\alpha(g) = \beta(g) + 1 \in R \rtimes \mathbb Z$. Now a unitary group $\unit(R, \Delta)$ consists of elements $g \in R \times \Delta$ such that with $\alpha(g)^{-1} = \inv{\alpha(g)}$, $\pi(\gamma(g)) = \beta(g)$, and $\rho(\gamma(g)) = \inv{\beta(g)}$. The group operation is given by $\alpha(gg') = \alpha(g)\, \alpha(g')$ and $\gamma(gg') = \gamma(g) \cdot \alpha(g') \dotplus \gamma(g')$. In the special unital case $\unit(R, \Delta) \cong \{g \in R^* \mid g^{-1} = \inv g, (g - 1, \inv g - 1) \in \Delta\}$.

Recall a construction of an odd form ring by a quadratic module from \cite{OFADefVor}. Let $(R, L, A)$ be a quadratic ring, $(M, B, q)$ be a quadratic module over it. Consider a special unital odd form ring $(T', \Xi')$ with $T' = \End_R(M)^\oppose \times \End_R(M)$, the involution $\inv{(a^\oppose, b)} = (b^\oppose, a)$, and $\Xi' = \Xi'_{\mathrm{max}}$ be the maximal odd form parameter with respect to the hermitian form $B_{T'}(t, t') = \inv{\,t\,} t'$. Here $\unit(T', \Xi') \cong \Aut_R(M)$ is the group of module automorphisms of $M$. Now let
\begin{align*}
T &= \{(x^\oppose, y) \in T' \mid B(m, ym') = B(xm, m')\},\\
\Xi &= \{(x^\oppose, y; z^\oppose, w) \in \Xi_{\mathrm{max}} \mid q(ym) + \varphi(B(m, wm)) = 0\}.
\end{align*}
Then $(T, \Xi) \subseteq (T', \Xi')$ is a special unital odd form subring. Moreover, under the isomorphism $\unit(T', \Xi') \cong \Aut_R(M)$ the group $\unit(T, \Xi)$ corresponds to $\unit(M, B, q)$. We say that $(T, \Xi)$ is obtained from $(M, B, q)$ by a naive construction.

Now we give another construction of an odd form ring by a quadratic module.

Let $(R, L, A)$ be a regular quadratic ring and $(M, B, q)$ be a quadratic module over it. A canonical construction of an odd form ring $(S, \Theta)$ from $(M, B, q)$ goes as follows. Recall that $L$ induces a Morita equivalence between $R$ and $R^\oppose$. Hence there is an $R$-$R^\oppose$-bimodule $L^{-1}$ and coherent pairing isomorphisms $L \otimes_R L^{-1} \to R^\oppose$, $L^{-1} \otimes_{R^\oppose} L \to R$. For simplicity, we consider these isomorphisms as identity maps and do not write the symbol $\otimes$ for elements of any tensor products involving $L$ or $L^{-1}$. The involution on $L$ induces a unique involution on $L^{-1}$ such that $\inv{\,l\,} \inv{\,t\,} = (tl)^\oppose \in R^\oppose$ and $(\inv{\,t\,} \inv{\,l\,})^\oppose = lt \in R^\oppose$ for all $l \in L$ and $t \in L^{-1}$. Let
$$S = M \otimes_R L^{-1} \otimes_{R^\oppose} M^\oppose,$$
it is a non-unital involution ring with the operations
\begin{align*}
(m_1 t_1 {m'_1}^\oppose) (m_2 t_2 {m'_2}^\oppose) &= m_1 t_1\, B(m'_1, m_2)\, t_2 {m'_2}^\oppose,\\
\inv{m t {m'}^\oppose} &= m' \inv{\,t\,} m^\oppose.
\end{align*}
In order to define $\Theta$ let $N = L^{-1} \otimes_{R^\oppose} M^\oppose$, it is a left $R$-module and a right non-unital $S$-module. For the odd form parameter $\mathcal L$ of $(M, B, q)$ we denote the projections $\mathcal L \to M$ and $\mathcal L \to L$ by $\pi$ and $\rho$, as for odd form rings. As an abstract group $\Theta$ is generated by elements $\phi(s)$ for $s \in S$ and $u \boxtimes n$ for $u \in \mathcal L$, $n \in N$ with the relations
\begin{itemize}
\item $\phi(s + s') = \phi(s) \dotplus \phi(s')$;
\item $[\phi(s), u \boxtimes n] = \dot 0$;
\item $[u \boxtimes n, u' \boxtimes n'] = \phi\bigl(-n^\oppose\, B(\pi(u), \pi(u'))\, n'\bigr)$;
\item $(u \dotplus u') \boxtimes n = u \boxtimes n \dotplus u' \boxtimes n$;
\item $(u \cdot r) \boxtimes n = u \boxtimes rn$;
\item $u \boxtimes (n + n') = u \boxtimes n \dotplus \phi({n'}^\oppose\, \rho(u)\, n) \dotplus u \boxtimes n'$;
\item $(0, l - \inv{\,l\,}) \boxtimes n = \phi(n^\oppose l n)$;
\item $\phi(s) = \dot 0$ if $s = \inv s$.
\end{itemize}
The operations are by
\begin{itemize}
\item $\pi(\phi(s)) = 0$, $\pi(u \boxtimes n) = \pi(u)\, n$;
\item $\rho(\phi(s)) = s - \inv s$, $\rho(u \boxtimes n) = n^\oppose\, \rho(u)\, n$;
\item $\phi(s') \cdot s = \phi(\inv s s' s)$, $(u \boxtimes n) \cdot s = u \boxtimes ns$.
\end{itemize}

\begin{lemma}\label{canon-well-def}
The canonical construction gives a well-defined odd form ring $(S, \Theta)$. If $N = \bigoplus_{i \in I} R e_i$ is a free left $R$-module, then $\Theta = \bigoplus_i^\cdot (\mathcal L \boxtimes e_i) \dotoplus \bigoplus_{i < j}^\cdot \phi(e_i^\oppose L e_j)$ for any linear order on $I$. If $M$ is flat over $R$, then $(S, \Theta)$ is special.
\end{lemma}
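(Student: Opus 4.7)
The plan is to split the lemma into three parts and prove them in order, establishing the free case as the base of the argument.

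\emph{Ring structure on $S$.} First I verify that the multiplication and involution formulas on $S = M \otimes_R L^{-1} \otimes_{R^\oppose} M^\oppose$ are well-defined multilinear maps. The only non-trivial balancing is in the product formula, where the sesquilinearity $B(mr, m'r') = r^\oppose B(m, m') r'$ absorbs scalars across the $L^{-1}$-factor. Associativity is a direct chase, and the involution axioms $\inv{\inv s} = s$ and $\inv{s_1 s_2} = \inv{s_2}\, \inv{s_1}$ follow from $\inv{B(m, m')} = B(m', m)$ together with the identities $\inv l \,\inv t = (tl)^\oppose$ and $(\inv t \,\inv l)^\oppose = lt$ fixed just before the lemma.

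\emph{The free case.} Suppose $N = \bigoplus_{i \in I} R e_i$ with $I$ linearly ordered. I would construct $\Theta$ explicitly as the set $\bigoplus_{i}^\cdot (\mathcal L \boxtimes e_i) \dotoplus \bigoplus_{i < j}^\cdot \phi(e_i^\oppose L e_j)$, declaring the $\phi$-summand central, the $\dotplus$-operation on each $\mathcal L \boxtimes e_i$ inherited from $\mathcal L$, and the commutator $[u \boxtimes e_i, u' \boxtimes e_j] = \phi\bigl(-e_i^\oppose\, B(\pi(u), \pi(u'))\, e_j\bigr)$ for $i < j$. The maps $\pi$, $\rho$, and the $S^\bullet$-action extend from the stated formulas on generators. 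I then check the eight odd form ring axioms summand by summand. Two remarks guide this: the identity $\phi(a) = \dot 0$ for $a = \inv a$ kills the diagonal part of the commutator correction (ensuring consistency within a single $\mathcal L \boxtimes e_i$), and the same identity shows that swapping two adjacent indices yields an isomorphic group, establishing independence of the chosen ordering.

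\emph{General $N$ and specialness.} For arbitrary $N$, I fix a surjection $\eta \colon P \twoheadrightarrow N$ of left $R$-modules with $P$ free and take $\Theta$ to be the quotient of the free-case $\Theta^P$ by the congruence generated by $u \boxtimes k = \dot 0$ for $u \in \mathcal L$ and $k \in \ker \eta$. All defining relations lift to $\Theta^P$ by linearity, so the quotient satisfies the stated presentation. For specialness, note that $L^{-1}$ is $R^\oppose$-projective (as a side of the invertible bimodule $L$), so if $M$ is $R$-flat then $M^\oppose$ is $R^\oppose$-flat and hence $N = L^{-1} \otimes_{R^\oppose} M^\oppose$ is $R$-flat. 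By Lazard's theorem $N = \varinjlim N_\alpha$ with $N_\alpha$ finite free; in each free case the explicit direct-sum description makes $(\pi, \rho) \colon \Theta \to S \times S$ visibly injective, and injectivity passes to the filtered colimit because the tensor constructions defining $S$ and $\Theta$ commute with it.

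\emph{Main obstacle.} The technical heart is the free case: the many odd form ring axioms must be checked against the bilinear commutator formula, and the trickiest interaction is between the distributivity law $u \cdot (a + b) = u \cdot a \dotplus \phi(\inv b\, \rho(u)\, a) \dotplus u \cdot b$ and the $\boxtimes$-bilinearity relations, which must fit together with the chosen ordering on $I$. Once this is settled, the descent to general $N$ and the specialness claim follow with essentially formal arguments.
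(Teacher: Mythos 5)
Your verification of the ring structure on $S$, your explicit description in the free case, and your specialness argument are all reasonable; the last essentially reproduces the paper's (Govorov--Lazard plus the observation that a filtered colimit of special odd form rings is special, with the caveat that the approximating objects must themselves be canonical constructions of quadratic modules --- the paper takes $P_i = Q_i^\oppose \otimes_{R^\oppose} L$ so that functoriality is automatic). The genuine gap is in your reduction of the first claim to the free case. You choose a free presentation $\eta \colon P \twoheadrightarrow N$ and declare $\Theta$ to be $\Theta^P$ modulo the congruence generated by $u \boxtimes k = \dot 0$ for $k \in \Ker(\eta)$, asserting that ``the quotient satisfies the stated presentation.'' This is not established, and the set of relations you impose is too small: the $\phi$-generators of $\Theta^P$ take arguments in a ring built from $P$ (in the free case $\bigoplus_{i, j} e_i^\oppose L e_j$), which surjects onto $S$ with a nontrivial kernel that you never kill; relatedly, two lifts $p$ and $p + k$ of the same $n \in N$ differ in $\Theta^P$ by $\phi(k^\oppose\, \rho(u)\, p) \dotplus u \boxtimes k$, and the first term is not visibly in your normal subgroup. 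More seriously, even once the group-theoretic quotient is identified, you must still show that $\rho$ and the $S^\bullet$-action --- which are quadratic maps, not homomorphisms --- descend to it. Your free case gets their well-definedness from the normal form, but the quotient destroys the normal form and you offer no replacement; well-definedness on a quotient of a presented group is precisely the content of the first claim, so the argument becomes circular at this point.

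The paper avoids the free presentation entirely for the first claim: it works with the given presentation for arbitrary $N$ and invokes Hartl's criterion for quadratic maps on groups of nilpotency class $2$ (the cross effect $-\inv{\pi(u)}\pi(v)$ is well defined and central-valued, and $\rho$ respects each relation), and a similar pointwise argument for the action. If you want to keep your bottom-up strategy, you would need to enlarge the congruence to include $\phi$ of the kernel of the ring surjection, prove that the kernel of $\Theta^P \to \Theta$ is exactly the resulting normal subgroup, and then separately verify that $\rho$ and the action are constant on its cosets --- at which point you have essentially re-proved Hartl's criterion by hand.
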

\begin{proof}
It is easy to see that $\pi \colon \Theta \to S$ and $\phi \colon S \to \Theta$ are well-defined group homomorphisms. The map $\rho \colon \Theta \to S$ is a well-defined quadratic map in the sense of M. Hartl by \cite[proposition 1.20]{Hartl}: the cross effect $-\inv{\pi(u)} \pi(v)$ is well-defined as a biadditive map, it takes values in the center of the target group $S$, and the values of $\rho$ on both sides of each relation (evaluated using the cross effect) coincide. By \cite[corollary 1.19]{Hartl} these operations satisfy all axioms not involving the action of $S^\bullet$ on $\Theta$. The second claim is clear, the odd form ring $(S, \Theta)$ is actually special if $N$ is a free $R$-module.

The action map $\Theta \times S \to \Theta$ may be considered as a homomorphism from $\Theta$ to the group of quadratic maps $S \to \Theta$ with cross effects taking values in $\phi(S)$, where the group operation is the pointwise addition. Such a homomorphism is clearly well-defined. The remaining axioms follows from a direct calculation.

In order to prove the last claim recall Govorov -- Lazard theorem: the flat left $R$-module $N$ is a direct limit of finite free modules $Q_i$. Let $P_i = Q_i^\oppose \otimes_{R^\oppose} L$, so $Q_i = L^{-1} \otimes_{R^\oppose} P_i^\oppose$. Consider the odd form rings $(S_i, \Theta_i)$ constructed from the quadratic modules $(P_i, B|_{P_i \times P_i}, q|_{P_i})$ over $(R, L, A)$. By the second claim they are special and by construction $(S, \Theta) = \varinjlim_i (S_i, \Theta_i)$. A direct limit of special odd form rings is clearly special.
\end{proof}

Now we have odd form rings $(T, \Xi)$ and $(S, \Theta)$ constructed from a quadratic module $(M, B, q)$ over a regular quadratic ring $(R, L, A)$. There is a morphism $f \colon (S, \Theta) \to (T, \Xi)$ such that
$$f(m t {m'}^\oppose) = \bigl((m' \inv{\,t\,} B(m, -))^\oppose, m t B(m', -)\bigr) \in T$$
for $m, m' \in M$ and $t \in L^{-1}$.

\begin{prop}\label{naive-canon}
Let $(R, L, A)$ be a regular quadratic ring and $(M, B, q)$ be a regular quadratic module over it. Then the canonical morphism $f \colon (S, \Theta) \to (T, \Xi)$ from the canonical odd form ring of $(M, B, q)$ to the naive one is an isomorphism.
\end{prop}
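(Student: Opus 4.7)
To prove the proposition I would first show that the underlying ring map $f \colon S \to T$ is an isomorphism, and then upgrade this to the odd form parameters. For the ring map, the argument is Morita-theoretic. Regularity of $(R, L, A)$ gives that $L$ is invertible, so $L^{-1} \otimes_{R^\oppose} L \cong R$ canonically. Regularity of $(M, B, q)$ gives that $M$ is finite projective and that $m' \mapsto B(m', -)$ is an isomorphism $M^\oppose \xrightarrow{\sim} \Hom_R(M, L)$. Chaining these with the standard $M \otimes_R \Hom_R(M, R) \cong \End_R(M)$ for finite projective $M$, one obtains a natural isomorphism $S = M \otimes_R L^{-1} \otimes_{R^\oppose} M^\oppose \cong \End_R(M)$. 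The second projection $T \to \End_R(M)$ is itself an isomorphism, because for any $y \in \End_R(M)$ the $B$-adjoint $x$ with $B(xm, m') = B(m, ym')$ exists and is unique by regularity of $B$. A direct calculation then shows that $f$ realizes the resulting composite $S \xrightarrow{\sim} T$ and intertwines both the ring multiplication and the involution.

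Next, both $(S, \Theta)$ and $(T, \Xi)$ are special: the former by the last clause of Lemma \ref{canon-well-def} (since finite projective $M$ is flat), the latter by construction as a subring of $(T', \Xi'_{\mathrm{max}})$, whose odd form parameter embeds into $T' \times T'$ by definition. Consequently $(\pi, \rho)$ embeds $\Theta$ and $\Xi$ into $S \times S$ and $T \times T$ respectively, and since $f$ intertwines $\pi$ and $\rho$, injectivity of $f \colon \Theta \to \Xi$ is immediate from the ring isomorphism above.

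For surjectivity I would reduce to the case where $N = L^{-1} \otimes_{R^\oppose} M^\oppose$ is a free left $R$-module, so that Lemma \ref{canon-well-def} supplies the normal form $\Theta = \bigoplus_i^\cdot (\mathcal L \boxtimes e_i) \dotoplus \bigoplus_{i < j}^\cdot \phi(e_i^\oppose L e_j)$; one then describes $\Xi$ explicitly in the basis $\{e_i\}$ via the matrix entries of its elements and matches the two term by term under $f$. The reduction follows the Govorov--Lazard pattern used in the proof of Lemma \ref{canon-well-def}, exploiting that both constructions are functorial under orthogonal summands and compatible with the relevant filtered colimits. The main obstacle is precisely this matching: one must verify that the quadratic compatibility $q(ym) + \varphi(B(m, wm)) = 0$ cutting $\Xi$ out of $\Xi'_{\mathrm{max}} \cap (T \times T)$ corresponds exactly to the constraint $q(\pi(u)) + \varphi(l) = 0$ defining $\mathcal L \subseteq \Heis(M)$ in the $u \boxtimes n$ generators of $\Theta$, once one accounts for the twist by $n \in N$ in $\pi(u \boxtimes n) = \pi(u)n$ and $\rho(u \boxtimes n) = n^\oppose \rho(u) n$ and for the Morita identification of $S$ with $\End_R(M)$.
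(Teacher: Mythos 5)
Your handling of the ring isomorphism $S \cong T \cong \End_R(M)$ and of injectivity matches the paper: both odd form rings are special ($(S,\Theta)$ by the last clause of Lemma \ref{canon-well-def}, since a finite projective module is flat), so injectivity of $f$ on $\Theta$ reduces to the ring isomorphism. The gap is in your surjectivity argument. Govorov--Lazard presents $N = L^{-1} \otimes_{R^\oppose} M^\oppose$ as a \emph{filtered colimit} of finite free modules $Q_i$ whose transition maps need not be split. The canonical construction commutes with this colimit, but the naive one does not: $T$ and $\Xi$ are carved out of $\End_R(M)^\oppose \times \End_R(M)$, the functor $\End_R(-)$ is not functorial along the maps $P_i \to P_j$, and $\End_R(M)$ is not the colimit of the $\End_R(P_i)$; moreover the intermediate quadratic modules $(P_i, B|_{P_i}, q|_{P_i})$ are not regular, so the statement you are trying to prove does not even apply to them. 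The alternative reading of your reduction --- realize $M$ as an orthogonal summand of a regular quadratic module whose $N$ is free and pass to the corner --- is not addressed either, and it faces a further obstruction: even for a hyperbolic space on a free module, $L^{-1} \otimes_{R^\oppose} P^\oppose$ need not be free unless $L^{-1}$ is, so ``free $N$'' is not attainable by stabilization in general. Finally, even granting a reduction, the ``term by term matching'' you defer is the entire content of the proposition.

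The paper's proof needs none of this. Since $M$ is finite projective, the identity of $S \cong \End_R(M)$ admits a finite decomposition $1 = \sum_i m_i t_i {m'_i}^\oppose$ (the dual basis lemma transported through the Morita identification). Given $(x^\oppose, y; z^\oppose, w) \in \Xi$, the defining condition of $\Xi$ says precisely that $(ym, B(m, wm)) \in \mathcal L$ for every $m \in M$, and then
$$\sum_i^\cdot \bigl((ym_i, B(m_i, wm_i)) \boxtimes t_i {m'_i}^\oppose\bigr) \dotplus \phi\bigl(\sum_{i < j} m'_j \inv{t_j}\, B(m_j, wm_i)\, t_i {m'_i}^\oppose\bigr)$$
is an explicit preimage in $\Theta$, the $\phi$-term absorbing the cross terms that $\rho$ produces on the noncommutative sum. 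You should replace your reduction by this direct finite computation; it is exactly the coordinate matching you postpone, carried out globally with a projective ``basis'' instead of a free one.
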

\begin{proof}
It is easy to see that $f$ gives an isomorphism between $S$ and $T$, also $T \cong \End_R(M)$. If $(x^\oppose, y; z^\oppose, w) \in \Xi$, then $(ym, B(m, wm)) \in \mathcal L$ for all $m \in M$, hence $(x^\oppose, y; z^\oppose, w)$ is the image of
$$
\sum_i^\cdot \bigl((ym_i, B(m_i, wm_i)) \boxtimes t_i {m'_i}^\oppose\bigr) \dotplus \phi\bigl(\sum_{i < j} m'_j \inv{t_j}\, B(m_j, wm_i)\, t_i {m'_i}^\oppose\bigr)
$$
for some decomposition $1 = \sum_i m_i t_i {m'_i}^\oppose \in S$. In other words, $f \colon \Theta \to \Xi$ is surjective. It is injective, because the odd form rings are special.
\end{proof}

Now let $K$ be a commutative ring. An odd form ring $(R, \Delta)$ is called an odd form $K$-algebra if $R$ is a $K$-algebra, the involution on $R$ is $K$-linear, and there is a unital right action of the monoid $K^\bullet$ on $\Delta$ by endomorphisms such that $(R \rtimes K, \Delta)$ is a unital odd form ring. There is an equivalent definition using a list of identities, see \cite{UnitK2}. Every odd form ring is an odd form algebra over $\mathbb Z$ in a unique way.

If $(R, L, A)$ is an even quadratic $K$-algebra and $(M, B, q)$ is a quadratic module over it, then odd form ring $(T, \Xi)$ obtained by the naive construction is an odd form $K$-algebra. Moreover, if $(R, L, A)$ is regular, then the odd form ring $(S, \Theta)$ obtained by the canonical construction is also an odd form $K$-algebra and the morphism $(S, \Theta) \to (T, \Xi)$ preserves this additional structure. The action of $K^\bullet$ on $\Theta$ is given by
$$
\phi(s) \cdot k = \phi(k^2 s), \quad
(u \boxtimes n) \cdot k = u \boxtimes nk.
$$

Recall a definition from \cite{UnitK2}. An augmented odd form $K$-algebra is a triple $(R, \Delta, \mathcal D)$ such that $(R, \Delta)$ is an odd form $K$-algebra, $\mathcal D \leq \Delta$ is an $R^\bullet$-invariant subgroup and a left $K$-module, and for all $a \in R$, $k \in K$, $v \in \mathcal D$
\begin{itemize}
\item $\phi(a) \in \mathcal D$, $\phi(ka) = k \phi(a)$;
\item $\pi(v) = 0$, $v \cdot k = k^2 v$, $\rho(kv) = k \rho(v)$, $kv \cdot a = k(v \cdot a)$.
\end{itemize}
Any odd form $K$-algebra $(R, \Delta)$ may be considered as an augmented odd form $K$-algebra $(R, \Delta, \phi(R))$.

\begin{lemma}\label{canon-aug}
Let $(R, L, A)$ be a regular even quadratic $K$-algebra and $(M, B, q)$ be a quadratic module over it. Suppose that $M$ is flat over $R$. Then the canonical odd form $K$-algebra $(S, \Theta)$ has an augmentation $\mathcal F \leq \Theta$ generated by the elements $\phi(s)$ for $s \in S$ and $u \boxtimes n$ for $u \in \Lambda \leq \mathcal L$ and $n \in N$. The operations are given by
$$k \phi(s) = \phi(ks), \quad k (u \boxtimes n) = ku \boxtimes n.$$
Moreover, $\mathcal F = \Ker(\pi)$ and as an abstract group it has the same presentation as $\Theta$ but with $u, u' \in \Lambda$.
\end{lemma}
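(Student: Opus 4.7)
The plan is to first pin down $\mathcal F$ as $\Ker(\pi)$ together with its explicit presentation, then read off the $K$-action and the augmentation axioms. All generators lie in $\Ker(\pi)$: $\pi(\phi(s)) = 0$ by definition of the canonical construction, and $\pi((0,l) \boxtimes n) = \pi(0,l)\, n = 0$ for $l \in \Lambda$. So $\mathcal F \subseteq \Ker(\pi)$, and $R^\bullet$-invariance of $\mathcal F$ follows from $\phi(s) \cdot a = \phi(\inv a s a)$ and $((0,l) \boxtimes n) \cdot a = (0,l) \boxtimes na$.

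For the reverse inclusion I reduce to the free-$N$ case via Govorov--Lazard, as in Lemma \ref{canon-well-def}: flatness of $M$ yields flatness of $N = L^{-1} \otimes_{R^\oppose} M^\oppose$ (since $L^{-1}$ is an invertible bimodule), so $N = \varinjlim_i Q_i$ with each $Q_i$ finite free, inducing $(S, \Theta) = \varinjlim (S_i, \Theta_i)$. Both the subgroups $\mathcal F_i$ and $\Ker(\pi_i)$ are preserved under the filtered colimit, so it suffices to handle the free case. For $N = \bigoplus_i R e_i$, the second claim of Lemma \ref{canon-well-def} writes every $\xi \in \Theta$ uniquely as $\sum_i^\cdot (u_i \boxtimes e_i) \dotoplus \phi(s_0)$. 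The condition $\pi(\xi) = 0$ becomes $\sum_i \pi(u_i)\, e_i = 0$ in $S = \bigoplus_i M e_i$, which forces each $\pi(u_i) = 0$, i.e.\ each $u_i = (0, l_i)$ with $l_i \in \Lambda$. This gives $\mathcal F = \Ker(\pi) = \bigoplus_i^\cdot (\Lambda \boxtimes e_i) \dotoplus \bigoplus_{i<j}^\cdot \phi(e_i^\oppose L e_j)$ in the free case, and hence in general.

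For the presentation, let $\mathcal F'$ be the abstract group presented exactly as $\Theta$ but restricted to $u, u' \in \Lambda$. Since $\pi(u) = 0$ for $u \in \Lambda$, both commutator relations collapse and $\mathcal F'$ is abelian. The tautological map $\mathcal F' \to \mathcal F$ is surjective by construction, and injectivity follows from the free-case normal form together with the colimit reduction. I then equip $\mathcal F$ with a $K$-action by setting $k\phi(s) = \phi(ks)$ and $k(u \boxtimes n) = (ku) \boxtimes n$ on generators, where $ku = (0, kl)$ uses the $K$-submodule structure on $\Lambda \leq L$. Well-definedness amounts to verifying that every defining relation of $\mathcal F'$ is $K$-equivariant; this is a short direct calculation using centrality of $K$ in $R$ and $K$-linearity of the involution (so $k^\oppose = k$ on the image of $K$ in $R$). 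The remaining augmented odd form algebra axioms reduce to routine checks on the two generator types; for example, $v \cdot k = k^2 v$ follows from $(0, l) \cdot k = (0, k^\oppose l k) = (0, k^2 l)$ together with the relation $(u \cdot r) \boxtimes n = u \boxtimes rn$.

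The main obstacle is the presentation/injectivity step, which hinges on the free-module normal form from Lemma \ref{canon-well-def}; once this is in hand, all remaining checks are direct generator-level verifications.
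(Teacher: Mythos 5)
Your proof is correct and follows essentially the same route as the paper's: reduce to the case of free $N$ via Govorov--Lazard and then read everything off the explicit decomposition $\mathcal F = \bigoplus_i^\cdot (\Lambda \boxtimes e_i) \dotoplus \bigoplus_{i<j}^\cdot \phi(e_i^\oppose L e_j)$ from Lemma~\ref{canon-well-def}. The paper compresses the identification $\mathcal F = \Ker(\pi)$, the presentation, and the $K$-action checks into ``all claims easily follow,'' and your write-up simply supplies those details.
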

\begin{proof}
As in the proof of lemma \ref{canon-well-def}, we may assume that $N = \bigoplus_{i \in I} Re_i$ is free over $R$ by Govorov -- Lazard theorem. In this case $\mathcal F = \bigoplus_i^\cdot (\Lambda \boxtimes e_i) \dotoplus \bigoplus_{i < j}^\cdot \phi(e_i^\oppose L e_j)$ for any linear order on $I$ and all claims easily follow.
\end{proof}

Augmented odd form algebras with flat factor-groups $\Delta / \mathcal D$ over $K$ are precisely the objects we need to describe all twisted forms of classical groups.

\section{Nilpotent descent}

From now on we fix a commutative ring $K$.

Recall from \cite{UnitK2} that a $2$-step nilpotent $K$-module is a pair $(M, M_0)$, where $M$ is a group with a unital right $K^\bullet$-action (the group operation is denoted by $\dotplus$ and action is denoted by $m \cdot k$), $M_0$ is a subgroup of $M$ and a left $K$-module, and there is a map $\tau \colon M \to M_0$ such that
\begin{itemize}
\item $[M, M] \leq M_0$, $[M, M_0] = \dot 0$;
\item $[m \cdot k, m' \cdot k'] = kk' [m, m']$;
\item $m \cdot (k + k') = m \cdot k \dotplus kk' \tau(m) \dotplus m \cdot k'$;
\item $m \cdot k = k^2 m$ for $m \in M_0$.
\end{itemize}
In a $2$-step nilpotent $K$-module $(M, M_0)$ there are identities $\tau(m) = 2m$ for $m \in M_0$, $\tau(m \cdot k) = k^2 \tau(m)$ for $m \in M$ and $k \in K$, and $\tau(m \dotplus m') = \tau(m) + [m, m'] + \tau(m')$ for $m, m' \in M$. Also, $M / M_0$ has a natural structure of a right $K$-module. For example, if $(R, \Delta, \mathcal D)$ is an augmented odd form $K$-algebra, then $(\Delta, \mathcal D)$ is a $2$-step nilpotent $K$-module with $\tau(u) = u \dotplus u \cdot (-1) = \phi(\rho(u))$. 

Morphisms of $2$-step nilpotent $K$-modules $f \colon (M, M_0) \to (N, N_0)$ are the group homomorphisms $f \colon M \to N$ mapping $M_0$ to $N_0$ and preserving all operations. Such a morphism induces $K$-module homomorphisms $f_0 \colon M_0 \to N_0$ and $f_1 \colon M / M_0 \to N / N_0$. If both $f_0$ and $f_1$ are injective, then $f$ is also injective by $5$-lemma for groups, and similarly for surjectivity.

For example, let $M_0$ and $M_1$ be left $K$-modules, $b \colon M_1 \times M_1 \to M_0$ be a bilinear map. Then $b$ is a $2$-cocycle in the sense of group theory, the central extension $M = M_1 \dotoplus M_0$ is a group with $(m_1 \dotoplus m_0) \dotplus (m_1' \dotoplus m_0') = (m_1 + m_1') \dotoplus (m_0 + b(m_1, m'_1) + m_0')$, and $M_0$ is naturally a subgroup of $M$ with the factor-group isomorphic to $M_1$. Moreover, $(M, M_0)$ is a $2$-step nilpotent $K$-module with $(m_1 \dotoplus m_0) \cdot k = km_1 \dotoplus k^2 m_0$ and $\tau(m_1 \dotoplus m_0) = 2m_0 - b(m_1, m_1)$.

A $2$-step nilpotent $K$-module $(M, M_0)$ is called split if it is isomorphic to such a central extension. It is easy to see that every $2$-step nilpotent $K$-module with free $M / M_0$ over $K$ splits.

Now we define extension of scalars. Let $(M, M_0)$ be a $2$-step $K$-module and $K \to E$ be a homomorphism of commutative rings. A scalar extension is the pair $(M, M_0) \boxtimes_K E = (M \boxtimes_K E, \mathrm{Im}(E \otimes_K M_0))$, where the group $M \boxtimes_K E$ is the abstract group generated by $E \otimes_K M_0$ and the elements $m \boxtimes e$ for $m \in M$, $e \in E$ with the relations
\begin{itemize}
\item $[m \boxtimes e, x] = \dot 0$ for $x \in E \otimes_K M_0$;
\item $[m \boxtimes e, m' \boxtimes e'] = ee' \otimes [m, m']$;
\item $(m \dotplus m') \boxtimes e = m \boxtimes e \dotplus m' \boxtimes e$;
\item $(m \cdot k) \boxtimes e = m \boxtimes ke$;
\item $m \boxtimes (e + e') = m \boxtimes e \dotplus ee' \otimes \tau(m) \dotplus m \boxtimes e'$;
\item $m \boxtimes e = e^2 \otimes m$ for $m \in M_0$.
\end{itemize}

We say that the scalar extension $(M, M_0) \boxtimes_K E$ is well-defined if $E \otimes_K M_0 \to M \boxtimes_K E$ is injective, in this case $(M, M_0) \boxtimes_K E$ is a $2$-step nilpotent $E$-module with the operations given by $(m \boxtimes e) \cdot e' = m \boxtimes ee'$ and $\tau(m \boxtimes e) = e^2 \otimes \tau(m)$, also $(M \boxtimes_K E) / (E \otimes_K M_0) \cong (M / M_0) \otimes_K E$ as a right $E$-module. The operations are well-defined by \cite[proposition 1.20]{Hartl}. If the scalar extension is well-defined, then is satisfies the following universal property: $\Hom_E((M, M_0) \boxtimes_K E, (N, N_0)) \cong \Hom_K((M, M_0), (N, N_0))$ for all $2$-step nipotent $E$-modules $(N, N_0)$, where $\Hom_K$ and $\Hom_E$ denote the sets of morphisms of $2$-step nilpotent modules. Let us call $(M, M_0)$ universal if all its scalar extensions are well-defined.

\begin{lemma}\label{2-mod-ext}
A $2$-step nilpotent $K$-module $(M, M_0)$ is universal in the following cases: it is a direct limit of universal $2$-step nilpotent $K$-modules, it split, the $K$-module $M / M_0$ is flat, or $(M, M_0) = (N / X, N_0 / X)$ for a universal $2$-step $K$-module $(N, N_0)$ and a left $K$-submodule $X \leq N_0$.
\end{lemma}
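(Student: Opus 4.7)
The plan is to establish the four cases in the order split, direct limit, flat, and quotient, with each step feeding into the next. In the split case $(M, M_0) \cong (M_1 \dotoplus M_0, M_0)$ with bilinear cocycle $b$, I would identify $(M, M_0) \boxtimes_K E$ with the split $E$-module $((M_1 \otimes_K E) \dotoplus (M_0 \otimes_K E), M_0 \otimes_K E)$ carrying the cocycle $b \otimes_K E$, via the evident map $(m_1 \dotoplus m_0) \boxtimes e \mapsto (m_1 \otimes e) \dotoplus (e^2 \otimes m_0)$ and its inverse coming from bilinearity of $\otimes_K$; every defining relation of $\boxtimes_K E$ matches a relation in the split model. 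The embedding of $E \otimes_K M_0$ is then simply the inclusion of the second summand, manifestly injective. For the direct limit case, both $E \otimes_K -$ and $- \boxtimes_K E$ commute with filtered colimits (the latter because its presentation is finitary), so the injectivity of $E \otimes_K M_{i,0} \to M_i \boxtimes_K E$ for each $i$ passes to the colimit.

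For flat $M/M_0$, apply Govorov--Lazard to write $M/M_0 = \varinjlim F_i$ with each $F_i$ finite free over $K$, and set $M_i := M \times_{M/M_0} F_i$. Then $(M_i, M_0)$ has free quotient $M_i / M_0 \cong F_i$, so it splits by the remark immediately preceding the lemma, and is universal by the split case. A direct verification shows $(M, M_0) = \varinjlim (M_i, M_0)$ in the category of $2$-step nilpotent $K$-modules (the natural comparison map is bijective on both the $M_0$-part and the quotient), and universality follows from the direct limit case.

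For the quotient case, universality of $(N, N_0)$ gives $E \otimes_K N_0 \hookrightarrow N \boxtimes_K E$; let $Y$ denote the image of $E \otimes_K X$ inside $E \otimes_K N_0 \subseteq N \boxtimes_K E$, an $E$-submodule of the central subgroup $E \otimes_K N_0$. I would identify $(N/X) \boxtimes_K E \cong (N \boxtimes_K E)/Y$ by comparing presentations: for $x \in X$ the relation $x \boxtimes e = e^2 \otimes x$ places the image of $x$ in $Y$, so the projection $N \to N/X$ descends to a map $N \boxtimes_K E \to (N/X) \boxtimes_K E$ with kernel $Y$, while conversely the presentation of $(N/X) \boxtimes_K E$ lifts generator-by-generator to $N \boxtimes_K E$ modulo $Y$. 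Since $Y \subseteq E \otimes_K N_0$ and the latter embeds into $N \boxtimes_K E$, quotienting by the same subgroup on both sides yields
\[E \otimes_K (N_0/X) = (E \otimes_K N_0)/Y \hookrightarrow (N \boxtimes_K E)/Y = (N/X) \boxtimes_K E,\]
as required. The main obstacle will be this last step: carefully matching the two presentations, and confirming that $Y$ is genuinely closed under the full $E^\bullet$-action on $N \boxtimes_K E$ (immediate from centrality of $E \otimes_K N_0$ together with $K$-linearity of $X$), requires more care than the essentially mechanical preceding cases.
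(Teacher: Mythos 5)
Your proof is correct and follows essentially the same route as the paper: explicit identification of the scalar extension in the split case, commuting $\boxtimes_K E$ with filtered colimits, Govorov--Lazard plus fiber products $M \times_{M/M_0} F_i$ for the flat case, and passing to the quotient by a central $K$-submodule in the last case (which the paper dismisses as trivial but you work out in the detail it deserves). No gaps.
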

\begin{proof}
If $M = M_1 \dotoplus M_0$ is split, then $M \boxtimes_K E = (E \otimes_K M_1) \dotoplus (E \otimes_K M_0)$ with the $2$-cocycle $b_E$, hence $(M, M_0)$ is universal and its scalar extensions are also split. Clearly, a direct limit of universal $2$-step nilpotent $K$-modules is universal. If $M / M_0$ is flat, then by Govorov -- Lazard theorem $M / M_0 = \varinjlim_i P_i$ for some finite free $K$-modules $P_i$. It is easy to see that $(M \times_{M / M_0} P_i, M_0)$ are split $2$-step nilpotent $K$-modules with $(m, p) \cdot k = (m \cdot k, pk)$ and $\tau(m, p) = \tau(m)$ for $(m, p) \in M \times_{M / M_0} P_i$. Hence $(M, M_0) = \varinjlim_i (M \times_{M / M_0} P_i, M_0)$ is universal. The last case is trivial.
\end{proof}

There is a non-universal $2$-step nilpotent $K$-module. Let $K = \mathbb Z[\sqrt 2]$, $\widetilde M = K \dotoplus K$ be the commutative split $2$-step nilpotent $K$-module with $b(x, y) = xy$, and $M = \widetilde M / A$, where $A$ is the $K^\bullet$-invariant subgroup generated by $\sqrt 2 \dotoplus 1$. Clearly, $A$ is contained in the kernel of $\tau$ and has trivial intersection with $M_0$, hence $(M, M_0)$ is a $2$-step nilpotent $K$-module. It is easy to see that the map $\mathbb F_2 \otimes_K M_0 \to M \boxtimes_K \mathbb F_2$ is zero, since $(1 \dotoplus \sqrt 2) \cdot \sqrt 2 = 0 \dotoplus 1$ in $M$.

For any multiplicative subset $S \subseteq K$ and a $2$-step nilpotent $K$-module $(M, M_0)$ there is a functorial construction of the localization $(S^{-1} M, S^{-1} M_0)$ as a $2$-step nilpotent $S^{-1} K$-module, see \cite{UnitK2}. It is easy to see that it is the scalar extension of $(M, M_0)$ by $S^{-1} K$, so such scalar extensions are always well-defined. By the following lemma, this holds for all flat extensions.

\begin{lemma}
Let $(M, M_0)$ be a $2$-step $K$-module and $K \to E$ be a flat ring homomorphism. Then the scalar extension $(M, M_0) \boxtimes_K E$ is well-defined. If $K \to E$ is faithfully flat, then the morphism $(M, M_0) \to (M, M_0) \boxtimes_K E$ of $2$-step nilpotent $K$-modules is injective.
\end{lemma}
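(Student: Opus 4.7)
The plan is to reduce well-definedness to the universal case handled by Lemma \ref{2-mod-ext}, exploiting flatness of $E$ to commute the scalar extension functor with a quotient by a central submodule in the ``top'' layer of the nilpotent module. Concretely, I would first choose a free $K$-module $F$ together with a surjection $p \colon F \twoheadrightarrow M_1 := M/M_0$, and form the fiber product $\widetilde M = M \times_{M_1} F$. By the same calculation used in the proof of Lemma \ref{2-mod-ext}, the pair $(\widetilde M, M_0)$ is split as a $2$-step nilpotent $K$-module with $(m, f) \cdot k = (m \cdot k, fk)$ and $\tau(m, f) = \tau(m)$; hence it is universal, and its scalar extension fits into
$$\dot 0 \to E \otimes_K M_0 \to \widetilde M \boxtimes_K E \to E \otimes_K F \to 0.$$

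Next, writing $X = \ker p$ and identifying $X$ with the subgroup $\widetilde X = \{(0, x) : x \in X\} \leq \widetilde M$, I would note that elements of $\widetilde X$ are central in $\widetilde M$ and satisfy $\tau(0, x) = 0$, so the defining relations of $\widetilde M \boxtimes_K E$ force $x \otimes e \mapsto (0, x) \boxtimes e$ to be a bilinear homomorphism $E \otimes_K X \to \widetilde M \boxtimes_K E$. Its composition with the projection to $E \otimes_K F$ is the flat base change of $X \hookrightarrow F$, hence injective, so this embedding has trivial intersection with $E \otimes_K M_0$. Setting $Q = (\widetilde M \boxtimes_K E) / (E \otimes_K X)$, flatness of $E$ applied to $0 \to X \to F \to M_1 \to 0$ yields the exact sequence
$$\dot 0 \to E \otimes_K M_0 \to Q \to E \otimes_K M_1 \to 0.$$

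Then I would build a morphism $\Phi \colon M \boxtimes_K E \to Q$ directly from the presentation: a generator $m \boxtimes e$ maps to the class of $\widetilde m \boxtimes e$ in $Q$ for any lift $\widetilde m \in \widetilde M$ of $m$ (well-defined modulo $E \otimes_K X$ by distributivity and the centrality of $\widetilde X$), and a generator in $E \otimes_K M_0$ maps to its canonical image in $Q$. All defining relations hold in $Q$ because any lift satisfies $\tau(\widetilde m) = \tau(m)$ and $[\widetilde m, \widetilde m'] = [m, m']$, and for $m \in M_0$ we may take $\widetilde m = m$. The composite $E \otimes_K M_0 \to M \boxtimes_K E \xrightarrow{\Phi} Q$ is the canonical inclusion, which is injective, so $E \otimes_K M_0 \to M \boxtimes_K E$ is injective and the scalar extension is well-defined.

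For the faithfully flat assertion I would apply the $5$-lemma for groups noted after the definition of morphisms in this section to the map of central extensions induced by $(M, M_0) \to (M, M_0) \boxtimes_K E$, reducing to injectivity of $M_0 \to E \otimes_K M_0$ and $M_1 \to E \otimes_K M_1$, both immediate from faithful flatness. The main technical point is identifying $(\widetilde M, M_0)$ as split with the stated operations in the first step; once that calculation (entirely parallel to the one in Lemma \ref{2-mod-ext}) is in hand, the remaining steps are routine manipulations of the presentation.
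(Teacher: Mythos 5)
Your proof is correct, but it takes a genuinely different route from the paper's. The paper applies the Govorov--Lazard theorem to $E$ itself: the flat $K$-module $E$ is written as $\varinjlim_i P_i$ with $P_i$ finite free, and approximating groups $G_i$ are generated by $E \otimes_K M_0$ and symbols $m \boxtimes p$ for $p \in P_i$, with the multiplication of $E$ replaced by the pairings $P_i \otimes_K P_i \to E$; an explicit normal form $G_i = E \otimes_K M_0 \dotoplus \bigoplus_j^\cdot A \boxtimes e_j$ shows that $E \otimes_K M_0$ embeds into each $G_i$, and $M \boxtimes_K E = \varinjlim_i G_i$. You instead take a free presentation of $M / M_0$, pass to the split (hence universal) fiber product $(\widetilde M, M_0)$ --- the same device the paper uses inside the proof of lemma \ref{2-mod-ext} for flat $M / M_0$, only with an arbitrary free cover in place of the Lazard system --- and then descend through the central subgroup $E \otimes_K X$. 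Flatness of $E$ enters your argument exactly once, to keep $E \otimes_K X \to E \otimes_K F$ injective so that the image of $E \otimes_K X$ meets $E \otimes_K M_0$ trivially in $\widetilde M \boxtimes_K E$; in the paper it enters through Lazard's theorem. Your route avoids the somewhat ad hoc groups $G_i$ and their normal-form verification, and it isolates the role of flatness in a single exactness statement; the paper's route produces a concrete normal form for $M \boxtimes_K E$, which is reused elsewhere (e.g.\ in lemmas \ref{canon-well-def} and \ref{canon-aug}). One small point worth making explicit in your write-up: to factor $x \otimes e \mapsto (0, x) \boxtimes e$ through the tensor product $E \otimes_K K X$ you should note that these elements pairwise commute (their commutators are $ee' \otimes [(0,x),(0,x')] = \dot 0$), so the subgroup they generate is abelian and the universal property of the tensor product applies. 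The faithfully flat half of your argument coincides with the paper's.
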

\begin{proof}
By Govorov -- Lazard theorem $E = \varinjlim_i P_i$ for finite free $K$-modules $P_i$. Let $G_i$ be the groups generated by $E \otimes_K M_0$ and $m \boxtimes p$ for $m \in M$, $p \in P_i$ with the same relations as for the extension of scalars, using the product maps $P_i \otimes_K P_i \to E$ instead of the multiplication on $E$. It is easy to see that $G_i = E \otimes_K M_0 \dotoplus \bigoplus_j^\cdot A \boxtimes e_j$, where $e_1, \ldots, e_{N_i}$ is the basis of $P_i$ and $A$ is any set of representatives of $M / M_0$ in $M$ containing $\dot 0$. Hence $E \otimes_K M_0$ is a subgroup of $G_i$ and their direct limit $\varinjlim_i G_i = M \boxtimes_K E$. The last claim follows from the injectivity of $M_0 \to E \otimes_K M_0$ and $M / M_0 \to (M / M_0) \otimes_K E$.
\end{proof}

Now let us generalize faithfully flat descent to $2$-step nilpotent modules. Let $K \to E$ be a faithfully flat homomorphism of commutative rings. A descent datum for $2$-step nilpotent modules with respect to $E / K$ is a $2$-step nilpotent $E$-module $(N, N_0)$ with an isomorphism of $2$-step nilpotent $(E \otimes_K E)$-modules $\psi \colon i_1^* (N, N_0) \cong i_2^* (N, N_0)$ satisfying the cocycle condition. Here $i_s^*(N, N_0)$ are the extensions of scalars of $(N, N_0)$ along one of the two canonical maps $E \to E \otimes_K E$.

\begin{prop}\label{2-mod-descent}
Let $E / K$ be a faithfully flat extension of commutative rings. Then the category of $2$-step nilpotent $K$-modules is equivalent to the category of descent data for $2$-step nilpotent modules with respect to $E / K$. A $2$-step nilpotent $K$-module $(M, M_0)$ corresponds to $(M, M_0) \boxtimes_K E$ with the canonical isomorphism $\psi$, it is universal if and only if $(M, M_0) \boxtimes_K E$ is universal over $E$.
\end{prop}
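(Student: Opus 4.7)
The plan is to reduce to classical faithfully flat descent for $K$-modules applied separately to the subgroup $M_0$ and the factor $M/M_0$, and then to reconstruct the full structure of $M$ via the equalizer of the descent datum. Writing $\eta_s \colon (N, N_0) \to i_s^*(N, N_0)$ for the two canonical scalar-extension maps along $i_s \colon E \to E \otimes_K E$, I would define the backward functor by
\[
  M = \{n \in N \mid \psi(\eta_1(n)) = \eta_2(n)\}, \qquad M_0 = M \cap N_0.
\]
Since $\psi$ is an isomorphism of $2$-step nilpotent $(E \otimes_K E)$-modules, the equalizer $M$ is stable under the group operation, under $M_0$, under the $K^\bullet$-action inherited from the $E^\bullet$-action by restriction along $K \to E$, and under $\tau$, yielding a $2$-step nilpotent $K$-module.

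Next I would show that the canonical morphism $(M, M_0) \boxtimes_K E \to (N, N_0)$ is an isomorphism, which in particular proves that the scalar extension is well-defined. The datum $\psi$ restricts to a descent datum on the $E$-module $N_0$ whose classical descent is exactly $M_0$, so $E \otimes_K M_0 \cong N_0$. Likewise $\psi$ induces a descent datum on the right $E$-module $N/N_0$, whose classical descent I claim is $M/M_0$: for any element of $N/N_0$ fixed by the descent and any lift $n \in N$, the discrepancy $\psi(\eta_1(n)) \dotminus \eta_2(n)$ lies in $N_0 \otimes_K E$ and is an Amitsur $1$-cocycle for the descent on $N_0$, hence a $1$-coboundary, so $n$ can be adjusted by an element of $N_0$ into $M$. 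A five-lemma for groups applied to the central extension $\dot 0 \to N_0 \to N \to N/N_0 \to \dot 0$ then yields the isomorphism.

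For full faithfulness, any morphism of descent data $g \colon (M, M_0) \boxtimes_K E \to (M', M'_0) \boxtimes_K E$ compatible with the canonical descent isomorphisms sends the equalizer $M$ into $M'$, hence restricts to a unique $K$-morphism $(M, M_0) \to (M', M'_0)$ whose scalar extension recovers $g$; uniqueness rests on injectivity of $(M, M_0) \hookrightarrow (M, M_0) \boxtimes_K E$, which is the preceding lemma. For the universality equivalence, any $E$-algebra is a $K$-algebra and composition of scalar extensions gives $(M, M_0) \boxtimes_K E' \cong ((M, M_0) \boxtimes_K E) \boxtimes_E E'$ by Yoneda, proving one direction. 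Conversely, given a $K$-algebra $K'$, set $E' = K' \otimes_K E$, faithfully flat over $K'$; in the commutative square with horizontal arrows $K' \otimes_K M_0 \to M \boxtimes_K K'$ and $E' \otimes_K M_0 \to M \boxtimes_K E'$, the left vertical is injective by faithful flatness of $E'/K'$, the right by the preceding lemma, and the bottom by universality of $(M, M_0) \boxtimes_K E$; a diagram chase then yields injectivity of the top. The most delicate point is the lifting step in the second paragraph, since $N$ is only a group and module descent cannot be invoked for $N$ directly; the argument must take place within the nonabelian extension, using that the obstruction falls into the abelian piece $N_0$ where classical Amitsur descent applies.
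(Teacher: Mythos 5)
Your proposal is correct and follows essentially the same route as the paper: the backward functor is the same equalizer $M = \{n \in N \mid \psi(i_1(n)) = i_2(n)\}$, and the crucial lifting step is handled identically, by observing that the discrepancy $\psi(i_1(n)) \dotminus i_2(n)$ lands in the central abelian piece $i_2^*(N_0)$, where the cocycle condition on $\psi$ together with exactness of the (twisted) Amitsur complex for the module $N_0$ makes it a coboundary. The only differences are organizational (the paper packages the equivalence as an adjunction and checks unit and counit, and verifies the universality claim via the base-change isomorphism of descent constructions checked on subgroups and factor-groups), not mathematical.
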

\begin{proof}
The construction from the statement is a functor from the category of $2$-step nilpotent $K$-modules to the category of descent data. We construct its right adjoint. Let $i_1, i_2 \colon E \to E \otimes_K E$; $i_{12}, i_{13}, i_{23} \colon E \otimes_K E \to E \otimes_K E \otimes_K E$; and $j_1, j_2, j_3 \colon E \to E \otimes_K E \otimes_K E$ be the canonical homomorphisms. Let $(N, N_0)$ be a $2$-step $E$-module and $\psi \colon i_1^*(N, N_0) \cong i_2^*(N, N_0)$ be an isomorphism satisfying the cocycle condition $i_{23}^* \psi \circ i_{12}^* \psi = i_{13}^* \psi \colon j_1^* N \to j_3^* N$. By $i_s$ we also denote the canonical morphisms of $2$-step modules $(N, N_0) \to i_s^*(N, N_0)$, and similarly for $i_{st}$ and $j_s$. Let $M = \{n \in N \mid \psi(i_1(n)) = i_2(n)\}$ and $M_0 = M \cap N_0$, then $(M, M_0)$ is a $2$-step $K$-module. We are going to prove that the map
$$f \colon M / M_0 \to \{n \in N / N_0 \mid \psi(i_1(n)) = i_2(n) \in i_2^*(N / N_0)\}$$
is an isomorphism. Clearly, $f$ is injective.

Let $n \in N$ be such that $x = \psi(i_1(n)) \dotminus i_2(n) \in i_2^*(N_0)$. In order to prove that $f$ is surjective it suffices to show that there is $\widetilde n \in N_0$ such that $x = \psi(i_1(\widetilde n)) - i_2(\widetilde n)$. This is equivalent to $(i_{23}^* \psi)(i_{12}(x)) + i_{23}(x) = i_{13}(x)$ by ordinary descent and by exactness of the Amitsur complex of $E / K$ tensored by $N_0$. After substitution we obtain
$$(i_{23}^* \psi \circ i_{12}^* \psi)(j_1(n)) \dotminus (i_{23}^* \psi)(j_2(n)) \dotplus (i_{23}^* \psi)(j_2(n)) \dotminus j_3(n) = (i_{13}^* \psi)(j_1(n)) \dotminus j_3(n),$$
i.e. the cocycle condition.

Clearly, the descent construction is right adjoint to the functor from the statement. It remains to show that the unit and the counit of our adjunction are isomorphisms. If $(M, M_0)$ is a $2$-step nilpotent $K$-module, then the unit of the adjunction is bijective on $M_0$ and $M / M_0$ by ordinary descent. Hence the unit is an isomorphism. The proof that the counit is an isomorphism is similar.

Finally, suppose that $(N, N_0)$ is a universal $2$-step nilpotent $E$-module and $\psi \colon i_1^*(N, N_0) \cong i_2^*(N, N_0)$ satisfies the cocycle condition. Let $K \to K'$ be a homomorphism of commutative rings and $E' = E \otimes_K K'$. If we denote the descent construction by $\mathrm{Des}^E_K(N, N_0; \psi)$, then the obvious morphism $\mathrm{Des}^E_K(N, N_0; \psi) \boxtimes_K K' \to \mathrm{Des}^{E'}_{K'}((N, N_0) \boxtimes_E E'; \psi_{E'})$ is an isomorphism, since it is bijective on the subgroups and the factor-groups. It follows that $\mathrm{Des}^E_K(N, N_0; \psi)$ is universal.
\end{proof}

If $(M, M_0)$ is a $2$-step nilpotent module over a product of rings $K' \times K''$, then there is a natural decomposition $(M, M_0) \cong (M', M_0') \times (M'', M_0'')$, where $(M', M_0')$ is a $2$-step nilpotent $K'$-module with the zero action of $(K'')^\bullet$ and similarly for $(M'', M_0'')$. Conversely, if $(M', M_0')$ is a $2$-step nilpotent $K'$-module and $(M'', M_0'')$ is a $2$-step nilpotent $K''$-module, then $(M', M_0') \times (M'', M_0'')$ is a $2$-step module over $K' \times K''$. The same decomposition holds for morphisms between $2$-step nilpotent modules.

It follows that if $(M, M_0)$ is a universal $2$-step nilpotent $K$-module, then the functor $E / K \mapsto M \boxtimes_K E$ from the category of commutative $K$-algebras to the category of groups satisfies the fpqc sheaf condition. In particular, this functor is a sheaf with respect to the fppf topology on the category of affine $K$-schemes. Now it is possible to define quasi-coherent $2$-step nilpotent sheaves on schemes and algebraic spaces, though we do not need this.

We are ready to apply the descent theory to quadratic modules and odd form algebras. Let $(R, \Delta, \mathcal D)$ be an augmented odd form $K$-algebra such that $(\Delta, \mathcal D)$ is universal, $E / K$ be a commutative ring extension. By \cite[proposition 1.20]{Hartl} and lemma \ref{2-mod-ext} the triple
$$E \otimes_K (R, \Delta, \mathcal D) = \bigl(R \otimes_K E, (\Delta \boxtimes_K E) / X, (E \otimes_K \mathcal D) / X\bigr)$$
is an augmented odd form $E$-algebra such that $\bigl((\Delta \boxtimes_K E) / X, (E \otimes_K \mathcal D) / X\bigr)$ is universal, where $X = \{\phi(a) \mid a = \inv a \in R \otimes_K E\}$. The operations for $a \in R$, $u \in \Delta$, $v \in \mathcal D$, $e, e' \in E$ are given by
\begin{itemize}
\item $(e \otimes v) \cdot (a \otimes e') = e {e'}^2 \otimes (v \cdot a)$, $(u \boxtimes e) \cdot (a \otimes e') = (u \cdot a) \boxtimes ee'$;
\item $\phi(a \otimes e) = e \otimes \phi(a)$;
\item $\pi(u \boxtimes e) = \pi(u) \otimes e$;
\item $\rho(e \otimes v) = \rho(v) \otimes e$, $\rho(u \boxtimes e) = \rho(u) \otimes e^2$.
\end{itemize}
Notice that $X = \dot 0$ if $E$ is flat over $K$ or if $\rho \colon \mathcal D \to R$ is a universally injective homomorphism of $K$-modules (i.e. it remains injective after any extension of scalars).

It follows that every augmented odd form $K$-algebra $(R, \Delta, \mathcal D)$ with universal $(\Delta, \mathcal D)$ gives a functor $(-) \otimes_K (R, \Delta, \mathcal D)$ on the category of commutative $K$-algebras. By proposition \ref{2-mod-descent}, this functor satisfies the fpqc sheaf condition.

It is not true in general that $E \otimes_K (R, \Delta, \mathcal D)$ is special for a special augmented odd form $K$-algebra $(R, \Delta, \mathcal D)$ even if $E / K$ is faithfully flat. Indeed, let $R = K$ be a perfect field of characteristic $2$, $\Delta = K$ with operations $x \cdot y = xy^2$ for $y \in R$, $\phi(x) = \dot 0$, $\rho(y) = y$, and $\pi(y) = 0$. This odd form $K$-algebra is special unital. Let $\mathcal D = 0$, so $(R, \Delta, \mathcal D)$ is an augmented odd form $K$-algebra and $(\Delta, \mathcal D)$ is universal. Now for a faithfully flat extension $E / K$ with $E = K[\eps] / (\eps^2)$ we have $\Delta \boxtimes_K E = E \dotoplus E \boxtimes \eps$, where $\rho(y \dotoplus y' \boxtimes \eps) = y$ and $\pi(y \dotoplus y' \boxtimes \eps) = 0$, i.e. $E \otimes_K (R, \Delta, \mathcal D)$ is not special.

On the other hand, let $(R, \Delta, \mathcal D)$ be a special augmented odd form $K$-algebra with universal $(\Delta, \mathcal D)$ and the additional property $\mathcal D = \Ker(\pi)$. Then $E \otimes_K (R, \Delta, \mathcal D)$ is special and has same property for all flat extensions $E / K$. If, in addition, the $K$-module homomorphisms $\pi \colon \Delta / \mathcal D \to R$ and $\rho \colon \mathcal D \to R$ are universally injective, then this holds for all extensions $E / K$.

All claims from the following lemma hold in the classical cases.

\begin{lemma}\label{canon-ext}
Let $(M, B, q)$ be a quadratic module over an even quadratic $K$-algebra $(R, L, A)$ with the odd form parameter $\mathcal L$. Suppose that the short exact sequence \(0 \to \Lambda \to L \to A \to 0\) is universal. Then $(\mathcal L, \Lambda)$ is a universal $2$-step nilpotent $K$-module, its construction commutes with scalar extensions.

Moreover, if $(R, L, A)$ is regular and $M$ is flat over $R$, then the augmented odd form $K$-algebra $(S, \Theta, \mathcal F)$ given by the canonical construction has the following properties: $\pi \colon \Theta / \mathcal F \to S$ is an isomorphism and $\rho \colon \mathcal F \to S$ is a universally injective homomorphism. The canonical construction commutes with scalar extensions.
\end{lemma}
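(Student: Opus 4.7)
The first half is driven by the short exact sequence $\dot 0 \to \Lambda \to \mathcal L \to M \to \dot 0$ of 2-step nilpotent $K$-modules that comes from evenness of $(R, L, A)$. To see that $(\mathcal L, \Lambda) \boxtimes_K E$ is well-defined for any commutative $K$-algebra $E$, I would factor the canonical map $E \otimes_K \Lambda \to \mathcal L \boxtimes_K E$ through the inclusion $(\mathcal L, \Lambda) \hookrightarrow (\Heis(M), L)$. Since $(\Heis(M), L)$ is split as a 2-step nilpotent $K$-module, realised as $M \dotoplus L$ with 2-cocycle $-B$, Lemma \ref{2-mod-ext} makes it universal with scalar extension $\Heis(E \otimes_K M)$, and the composition under consideration factors as $E \otimes_K \Lambda \to E \otimes_K L \hookrightarrow \Heis(E \otimes_K M)$ whose first arrow is injective by the universality hypothesis on $0 \to \Lambda \to L \to A \to 0$. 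Commutativity with scalar extension is then a short 5-lemma comparison between $(\mathcal L, \Lambda) \boxtimes_K E$ and the odd form parameter of $(M, B, q) \otimes_K E$: the canonical morphism between them restricts to $E \otimes_K \Lambda \cong \Lambda_E$ on the subgroup part (again by the universal SES assumption) and to $E \otimes_K M$ on the quotient.

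For the second half, $M$ flat over $R$ together with $L^{-1}$ being an invertible $R$-$R^\oppose$-bimodule forces $N = L^{-1} \otimes_{R^\oppose} M^\oppose$ to be flat as a left $R$-module. Govorov--Lazard then writes $N = \varinjlim Q_i$ with $Q_i$ finite free, producing quadratic modules $(P_i, B|_{P_i \times P_i}, q|_{P_i})$ whose canonical constructions satisfy $(S, \Theta, \mathcal F) = \varinjlim_i (S_i, \Theta_i, \mathcal F_i)$ as in the proof of Lemma \ref{canon-well-def}. Since filtered colimits commute with the relevant quotients and tensor products and preserve universal injectivity, it suffices to handle the case $N = \bigoplus_{i \in I} R e_i$. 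Lemmas \ref{canon-well-def} and \ref{canon-aug} then provide the explicit decompositions $\Theta = \bigoplus_i^\cdot (\mathcal L \boxtimes e_i) \dotoplus \bigoplus_{i < j}^\cdot \phi(e_i^\oppose L e_j)$ and $\mathcal F = \bigoplus_i^\cdot (\Lambda \boxtimes e_i) \dotoplus \bigoplus_{i < j}^\cdot \phi(e_i^\oppose L e_j)$, so $\Theta/\mathcal F = \bigoplus_i^\cdot M \boxtimes e_i$, and $\pi$ sends this isomorphically onto $S = M \otimes_R N = \bigoplus_i M e_i$.

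Universal injectivity of $\rho \colon \mathcal F \to S$ in the free case then reduces to two componentwise checks. On each diagonal summand $\Lambda \boxtimes e_i$, $\rho$ is the inclusion $\Lambda \hookrightarrow L \cong e_i^\oppose L e_i$, universally injective by hypothesis. On each off-diagonal summand $\phi(e_i^\oppose L e_j)$ with $i < j$, $\rho$ is the $K$-linear map $L \to (e_i^\oppose L e_j) \oplus (e_j^\oppose L e_i)$, $l \mapsto (l, -\inv{\,l\,})$, which is a split injection because projection onto the first coordinate is a retraction, so it is universally injective without any flatness hypothesis on $L$ over $K$. These two families land in disjoint direct summands of $S$, and $\rho$ as a whole is therefore universally injective. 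Commutativity of the whole canonical construction with scalar extension then follows by direct inspection of the free-case formulas, which are tautologically compatible with base change, and propagates through the filtered colimit. The main obstacle I anticipate is the bookkeeping of the two distinct $K$-module structures in the augmented setting --- the quadratic action $v \cdot k = k^2 v$ on $\Theta$ and the linear action $k v$ on $\mathcal F$ --- so as to verify that the direct sum decomposition of $\mathcal F$ above is genuinely a decomposition of $K$-modules, on which universal injectivity of $\rho$ can be tested summand by summand.
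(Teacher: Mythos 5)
Your proof is correct and follows essentially the same route as the paper's: the first part is the paper's comparison of $(\mathcal L,\Lambda)\boxtimes_K E$ with the odd form parameter of the extended module (you merely route the injectivity of $E\otimes_K\Lambda\to\mathcal L\boxtimes_K E$ through the split module $\Heis(M)$ first), and the second part is the paper's Govorov--Lazard reduction to free $N$ followed by the explicit decompositions of Lemmas \ref{canon-well-def} and \ref{canon-aug}. You simply spell out the summand-by-summand verification of universal injectivity of $\rho$ that the paper leaves implicit.
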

\begin{proof}
The pair $(\mathcal L, \Lambda)$ is a $2$-step nilpotent $K$-module with $\tau(m, l) = l - \inv{\,l\,} \in \Lambda$. There is a natural morphism \((\mathcal L, \Lambda) \boxtimes_K E \to (\mathcal L_E, \Lambda \otimes_K E)\) for all commutative $K$-algebras $E$, where $\mathcal L_E$ is the odd form parameter of $(E \otimes_K M, B_E, q_E)$. Since it is bijective on the subgroups and the factor-groups of the filtrations, it is an isomorphism and the scalar extension is well-defined.

To prove the second claim, by Govorov -- Lazard theorem applied to $N = L^{-1} \otimes_{R^\oppose} M^\oppose$ we may assume that $N$ is free. Then the result follows from lemma \ref{2-mod-ext} and the explicit description of $(\Theta, \mathcal F)$ from lemmas \ref{canon-well-def} and \ref{canon-aug}.
\end{proof}

\section{Classical odd form algebras}

Here we describe the odd form algebras and their unitary groups obtained from the canonical construction in the classical cases.

Let $M$ be the split quadratic module of rank $n$ over the classical even quadratic $K$-algebra of linear type (i.e. of rank $2n$ over $K$). Its canonical augmented odd form algebra $(R, \Delta, \mathcal D) = \ofalin(n, K)$ is called a linear odd form $K$-algebra. Using the naive construction and proposition \ref{canon-aug}, we have $R = \bigoplus_{\substack{1 \leq |i|, |j| \leq n\\ ij > 0}} K e_{ij}$, $\mathcal D = \bigoplus_{1 \leq i, j \leq n} K \phi(e_{ij})$, and $\Delta = \bigoplus^\cdot_{\substack{1 \leq |i|, |j| \leq n\\ ij > 0}} q_i \cdot K e_{ij} \dotoplus \mathcal D$ with
\begin{itemize}
\item $\inv{e_{ij}} = e_{-j, -i}$, $e_{ij} e_{jl} = e_{il}$, $e_{ij} e_{kl} = 0$ for $j \neq k$;
\item $\pi(q_i) = e_{ii}$, $\rho(q_i) = 0$, $q_i = q_i \cdot e_{ii}$.
\end{itemize}
By the naive construction, $\unit(\ofalin(n, K)) \cong \unit(M, B_M, q_M) = \glin(n, K)$ is the split general linear group. There is a monoid homomorphism $\det \colon R^\bullet \to (K \times K)^\bullet, (r_1, r_2) \mapsto (\det(r_1), \det(r_2))$ such that
$$\slin(n, K) \cong \{g \in \unit(\ofalin(n, K)) \mid \det(\alpha(g)) = 1\}.$$
Note that $K \cong \{x \in \Cent(R) \mid x = \inv x\}$ for $n > 0$.

Now let $M$ be the split symplectic $K$-module of rank $2n$. Its canonical augmented odd form algebra $(R, \Delta, \mathcal D) = \ofasymp(2n, K)$ is called a symplectic odd form $K$-algebra. Again using the naive construction and proposition \ref{canon-aug}, we obtain $R = \bigoplus_{1 \leq |i|, |j| \leq n} K e_{ij}$, $\mathcal D = \bigoplus_{\substack{1 \leq |i|, |j| \leq n\\ i + j > 0}} K \phi(e_{ij}) \oplus \bigoplus_{1 \leq |i| \leq n} Kv_i$, and $\Delta = \bigoplus^\cdot_{1 \leq |i|, |j| \leq n} q_i \cdot Ke_{ij} \dotoplus \mathcal D$ with
\begin{itemize}
\item $\inv{e_{ij}} = \eps_i \eps_j e_{-j, -i}$, $e_{ij} e_{jl} = e_{il}$, $e_{ij} e_{kl} = 0$ for $j \neq k$;
\item $\phi(e_{-ii}) = 2v_i$, $\pi(v_i) = 0$, $\rho(v_i) = e_{-i, i}$;
\item $v_i \cdot e_{ik} = \eps_i \eps_k v_k$, $v_i \cdot e_{jk} = \dot 0 \text{ for } i \neq j$;
\item $\pi(q_i) = e_{ii}$, $\rho(q_i) = 0$, $q_i = q_i \cdot e_{ii}$.
\end{itemize}
Here $\unit(\ofasymp(2n, K)) \cong \unit(M, B_M, q_M) = \symp(2n, K)$ is the split symplectic group. There is a canonical isomorphism $K \cong \Cent(R)$ for $n > 0$.

For the linear and symplectic odd form algebras the odd form parameter $\Delta$ is actually the maximal possible (for a special odd form ring). Hence their unitary groups are determined by the algebra $R$, it is an Azumaya algebra with involution. In the linear case the involution is of the second kind (over the ring $K \times K$), and in the symplectic case the involution is symplectic. The Azumaya algebras with orthogonal involutions arise from the orthogonal odd form algebras, as we show below (in the odd rank case we need $2 \in K^*$). But in the orthogonal odd form algebras the odd form parameter is not maximal if $2 \notin K^*$, hence the unitary group is not determined by the algebra $R$.

Consider the split classical quadratic $K$-module $M$ of rank $2n$. Its canonical augmented odd form algebra $(R, \Delta, \mathcal D) = \ofaorth(2n, K)$ is called an orthogonal odd form $K$-algebra (of even rank). Again using the naive construction and proposition \ref{canon-aug}, we have $R = \bigoplus_{1 \leq |i|, |j| \leq n} K e_{ij}$, $\mathcal D = \bigoplus_{\substack{1 \leq |i|, |j| \leq n\\ i + j > 0}} K \phi(e_{ij})$, and $\Delta = \bigoplus^\cdot_{1 \leq |i|, |j| \leq n} q_i \cdot Ke_{ij} \dotoplus \mathcal D$ with
\begin{itemize}
\item $\inv{e_{ij}} = e_{-j, -i}$, $e_{ij} e_{jl} = e_{il}$, $e_{ij} e_{kl} = 0$ for $j \neq k$;
\item $\pi(q_i) = e_{ii}$, $\rho(q_i) = 0$, $q_i = q_i \cdot e_{ii}$.
\end{itemize}
Clearly, $\unit(\ofaorth(2n, K)) \cong \unit(M, B_M, q_M) = \orth(2n, K)$ is the split orthogonal group. Let $(\mathbb Z / 2 \mathbb Z)(K)$ be the group of idempotents of $K$ with the group operation $e * f = e + f - 2 ef$. There is a group homomorphism $\Dickson \colon \orth(2n, K) \to (\mathbb Z / 2 \mathbb Z)(K)$ such that
$$\sorth(2n, K) = \{g \in \orth(2n, K) \mid \Dickson(g) = 1\}$$
(the Dickson invariant, it satisfies $\det(g) = 1 - 2 \Dickson(g)$), see \cite[section IV.5]{Knus} or \cite[Appendix C]{Conrad} for details. As in the symplectic case, there is a canonical isomorphism $K \cong \Cent(R)$ for $n > 0$.

Finally, let $M$ be the split classical orthogonal $K$-module of rank $2n + 1$. Its canonical augmented odd form algebra $(R, \Delta, \mathcal D) = \ofaorth(2n + 1, K)$ is called an orthogonal odd form $K$-algebra of odd rank. Since $M$ is not regular if $2 \notin K^*$, we cannot apply proposition \ref{canon-aug} and the unitary group $\unit(\ofaorth(2n + 1, K))$ may differs from $\unit(M, B_M, q_M) = \orth(2n + 1, K)$, as we show below. Let $e_{ij} = e_i e_{-j}^\oppose \in R$ for all $i, j$; $u_i = (e_0, -1) \boxtimes e_{-i}^\oppose \in \mathcal D$ for all $i$; and $q_i = (e_i, 0) \boxtimes e_{-i}^\oppose \in \Delta$ for $i \neq 0$. We get $R = \bigoplus_{-n \leq i, j \leq n} Ke_{ij}$, $\mathcal D = \bigoplus_{\substack{-n \leq i, j \leq n\\ i + j > 0}} K \phi(e_{ij})$, and $\Delta = \bigoplus^\cdot_{\substack{-n \leq i, j \leq n\\ i \neq 0}} q_i \cdot Ke_{ij} \dotoplus \bigoplus^\cdot_{-n \leq i \leq n} u_i \cdot K \dotoplus \mathcal D$ with
\begin{itemize}
\item $\inv{e_{ij}} = e_{-j, -i}$, $e_{ij} e_{jl} = e_{il}$ for $j \neq 0$, $e_{i0} e_{0l} = 2e_{il}$, $e_{ij} e_{kl} = 0$ for $j \neq k$;
\item $\pi(u_i) = e_{0i}$, $\rho(u_i) = -e_{-i, i}$;
\item $u_i \cdot e_{ik} = u_k$ for $i \neq 0$, $u_0 \cdot e_{0k} = u_k \cdot 2$, $u_i \cdot e_{jk} = \dot 0$ for $i \neq j$;
\item $\pi(q_i) = e_{ii}$, $\rho(q_i) = 0$, $q_i = q_i \cdot e_{ii}$.
\end{itemize}
The morphism into the naive odd form algebra is given by
\begin{align*}
\rep \colon R &\to \mat(2n + 1, K)^\oppose \times \mat(2n + 1, K),\\
e_{ij} &\mapsto (e_{-j, -i}^\oppose, e_{ij}) \text{ for } j \neq 0,\\
e_{i0} &\mapsto (2 e_{0, -i}^\oppose, 2 e_{i0}).
\end{align*}
The kernel of this ring homomorphism is $\{ke_{00} \mid 2k = 0\}$.

Let $(R, \Delta, \mathcal D) = \ofaorth(2n + 1, K)$ and $\Delta_{C(R)} = \{u \in \Delta \mid \pi(u) \in \Cent(R), \rho(u) \in \Cent(R)\}$. It is easy to see that $\Cent(R) = \{x(k) \mid k \in K\}$ and $\Delta_{\Cent(R)} = \{u(k) \mid k \in K\}$, where $x(k) = k e_{00} + 2k \sum_{i \neq 0} e_{ii}$ and $u(k) = \sum^\cdot_{i \neq 0} q_i \cdot 2k \dotplus u_0 \cdot k \dotplus \phi(2k^2 \sum_{i > 0} e_{ii})$. Hence both $\Cent(R)$ and $\Delta_{\Cent(R)}$ are isomorphic to $K$ as abelian groups, $x(k)\, x(k') = x(2kk')$, $\pi(u(k)) = x(k)$, $\rho(u(k)) = x(-k^2)$, $\phi(x(k)) = \dot 0$, and $u(k) \cdot x(k') = u(2kk')$. In other words, we may recover $K$ as an abelian group with the operation $k \mapsto k^2$ from the odd form ring $(R, \Delta)$.

The following proposition gives a complete description of the group $\unit(\ofaorth(2n + 1, K))$. Note that it is not isomorphic to $\orth(2n + 1, K) \cong \sorth(2n + 1, K) \times \mu_2(K)$ in general, where $\sorth(2n + 1, K) = \{g \in \orth(2n + 1, K) \mid \det(g) = 1\}$ and $\mu_2(K)$ is the group of square roots of $1$ in $K$. We denote the group $\unit(\ofaorth(2n + 1, K))$ by $\widetilde{\orth}(2n + 1, K)$, it is functorial on $K$. The idea of the embedding $\ofaorth(2n + 1, K) \to \ofaorth(2n + 2, K)$ is taken from \cite{DefMamaev}.

\begin{prop}\label{so-odd}
Let $M$ be the split classical orthogonal $K$-module of rank $2n + 1$ and $(R, \Delta, \mathcal D) = \ofaorth(2n + 1, K)$ be its canonical augmented odd form algebra. Then the functor $\widetilde{\orth}(2n + 1, -)$ on the category of commutative $K$-algebras is canonically isomorphic to $\sorth(2n + 1, -) \times \mathbb Z / 2 \mathbb Z$, it a smooth group scheme of relative dimension $n(2n + 1)$. In this decomposition over $K$ the second projection $\Dickson \colon \widetilde{\orth}(2n + 1, K) \to (\mathbb Z / 2 \mathbb Z)(K)$ satisfies $\det(1 + \rep(\beta(g))) = 1 - 2 \Dickson(g)$ and the kernel of the first projection is $\{g \in \widetilde{\orth}(2n + 1, K) \mid \beta(g) \in \Cent(R)\}$.
\end{prop}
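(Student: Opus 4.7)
My plan is to embed $\ofaorth(2n+1, K)$ into $\ofaorth(2n+2, K)$ and pull back the Dickson invariant from the even-rank case, following the strategy attributed to Mamaev. Define a morphism of classical quadratic $K$-modules $(M, B, q) \to (M', B', q')$ by $e_0 \mapsto e_{n+1} + e_{-(n+1)}$ and $e_i \mapsto e_i$ for $0 < |i| \leq n$; this is a morphism of quadratic modules because $q'(e_{n+1} + e_{-(n+1)}) = B'(e_{n+1}, e_{-(n+1)}) = 1 = q(e_0)$. By naturality of the canonical construction (via the presentation in Lemmas~\ref{canon-well-def} and \ref{canon-aug}) this lifts to an embedding $\iota \colon \ofaorth(2n+1, K) \hookrightarrow \ofaorth(2n+2, K)$; the essential verification is that the relation $e_{i0} e_{0l} = 2 e_{il}$ in $R$ corresponds to the identity $(e_{i,n+1} + e_{i,-(n+1)})(e_{n+1,l} + e_{-(n+1),l}) = 2 e_{il}$ in $R'$.

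\textbf{The Dickson formula.} The induced injective homomorphism $\iota_* \colon \widetilde{\orth}(2n+1, K) \hookrightarrow \orth(2n+2, K)$ lets me define $\Dickson(g) := \Dickson_{2n+2}(\iota_*(g))$. Direct checks on the generators show $\iota(e_{ij})(e_{n+1}) \in \iota(M)$ for every $i, j$, so $\iota_*(g)$ preserves the submodule $\iota(M) \subset M'$ and acts trivially on the rank-one quotient $M'/\iota(M)$. Consequently $\det(\iota_*(g)) = \det(1 + \rep(\beta(g)))$, and the classical identity $\det = 1 - 2\Dickson$ on $\orth(2n+2, -)$ yields the required formula $\det(1 + \rep(\beta(g))) = 1 - 2\Dickson(g)$.

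\textbf{The splitting.} For the decomposition $\widetilde{\orth}(2n+1, -) \cong \sorth(2n+1, -) \times \mathbb Z / 2 \mathbb Z$, define the first projection by $g \mapsto (1 - 2\Dickson(g)) \rep(\alpha(g))$. Since $(1-2e)^2 = 1$ for any idempotent $e$, this matrix is orthogonal with determinant $(1-2\Dickson(g))^{2n+2} = 1$ and hence lies in $\sorth(2n+1, K)$; the idempotent group law $e * f = e + f - 2ef$ gives the homomorphism property. Its kernel consists of $g$ with $\rep(\beta(g)) = -2\Dickson(g)\, I$; combining the explicit descriptions $\Cent(R) = \{x(k) \mid k \in K\}$ and $\Delta_{\Cent(R)} = \{u(k) \mid k \in K\}$ with the identity $\rho(u(k)) = x(-k^2)$, the existence of $\gamma(g)$ with $\pi(\gamma(g)) = \beta(g) = x(k)$ and $\rho(\gamma(g)) = \inv{\beta(g)} = x(k)$ forces $k^2 + k = 0$, so that $-k$ is an idempotent. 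This identifies the kernel with $\{g \mid \beta(g) \in \Cent(R)\}$ and maps it isomorphically onto $(\mathbb Z / 2 \mathbb Z)(K)$ via $\Dickson$. The main obstacle is producing a natural section $\sorth(2n+1, -) \to \widetilde{\orth}(2n+1, -)$ of the first projection, which I would construct by lifting $h \in \sorth(2n+1, K)$ explicitly using the presentation of the canonical odd form algebra and checking compatibility; once this section exists, the combined map $\widetilde{\orth}(2n+1, -) \to \sorth(2n+1, -) \times \mathbb Z / 2 \mathbb Z$ has trivial kernel (the intersection of the two kernels is forced to be $\{1\}$ by the above identification) and is surjective, yielding the isomorphism of functors.

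\textbf{Smoothness.} Once the functorial isomorphism is established, $\widetilde{\orth}(2n+1, -)$ is represented by the product of $\sorth(2n+1, -)$, a smooth affine $K$-group scheme of relative dimension $n(2n+1)$ (see Conrad \cite{Conrad}, Appendix~C), and the étale constant group scheme $\mathbb Z / 2 \mathbb Z$; it is therefore smooth of the stated dimension.
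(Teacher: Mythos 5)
Your setup (the embedding into $\ofaorth(2n+2,K)$, the pullback of the Dickson invariant, and the identification of $\{g \mid \beta(g)\in\Cent(R)\}$ with $(\mathbb Z/2\mathbb Z)(K)$ via $k\mapsto -\Dickson(g)$) matches the paper and is sound. But there is a genuine gap at exactly the point you flag as ``the main obstacle'': you never establish that the first projection $\widetilde\orth(2n+1,-)\to\sorth(2n+1,-)$ is surjective, i.e.\ that a section exists. Everything you actually prove only yields an injection of $\widetilde\orth(2n+1,K)/\mathrm Z(K)$ into $\sorth(2n+1,K)$, and the promised ``explicit lift of $h\in\sorth(2n+1,K)$ using the presentation'' is not carried out. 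This lift is delicate precisely where the two groups could differ: over a base with residue characteristic $2$ the map $\rep$ is neither injective nor surjective on the ring level (its kernel is $\{ke_{00}\mid 2k=0\}$ and $e_{i0}\mapsto 2e_{i0}$), and producing the required $\gamma(g)\in\Delta$ with $\pi(\gamma(g))=\beta(g)$ and $\rho(\gamma(g))=\inv{\beta(g)}$ is a nontrivial problem because $\rho$ is quadratic rather than linear. One cannot simply restrict an element of $\orth(2n+2,K)$ to the orthogonal complement of $e_{n+1}-e_{-n-1}$ and extend back, since in characteristic $2$ that complement does not split off.

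Your logical order also forecloses the paper's way around this. The paper does \emph{not} construct a section: it first proves smoothness of $\widetilde\orth(2n+1,-)$ directly, by writing out the defining equations in the coordinates $\beta_{ij}$ and applying the Jacobian criterion together with a fiberwise dimension count, and only then compares $\Ker(\Dickson)$ with $\sorth(2n+1,-)$ via the fibral isomorphism criterion for smooth group schemes: over a field the differential of $\rep$ at the identity is bijective on Lie algebras, so $p$ is an isogeny with \'etale kernel onto the connected group $\sorth(2n+1,-)$, and the kernel is then computed to be trivial using $\beta(g)=ke_{00}$, $2k=0$, $k^2+k=0$. Since you propose to deduce smoothness \emph{from} the isomorphism, you cannot invoke this criterion, and so you are committed to the explicit section, which is missing. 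To repair the argument, either supply that construction in full (including the characteristic-$2$ case), or reverse the order: prove representability and smoothness of $\widetilde\orth(2n+1,-)$ from the equations first, and then obtain surjectivity by the fibral isomorphism criterion as the paper does.
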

\begin{proof}
There is an embedding $M \to M'$ into the split classical orthogonal $K$-module of rank $2n + 2$ given by $e_i \mapsto e_i$ for $i \neq 0$ and $e_0 \mapsto e_{-n - 1} + e_{n + 1}$. This embedding induces a morphism between special augmented odd form algebras
\begin{align*}
f \colon \ofaorth(2n + 1, K) &\to \ofaorth(2n + 2, K),\\
e_{ij} &\mapsto e_{ij} \text{ for } i, j \neq 0,\\
e_{i0} &\mapsto e_{i, -n - 1} + e_{i, n + 1} \text{ for } i \neq 0,\\
e_{0j} &\mapsto e_{-n - 1, j} + e_{n + 1, j} \text{ for } j \neq 0,\\
e_{00} &\mapsto e_{-n - 1, -n - 1} + e_{-n - 1, n + 1} + e_{n + 1, -n - 1} + e_{n + 1, n + 1}.
\end{align*}

In particular, $f$ is injective. Let $\ofaorth(2n + 2, K) = (T, \Xi, \mathcal X)$. It is easy to see that $f(R) = \{t \in T \mid t(e_{-n - 1} - e_{n + 1}) = \inv{\,t\,} (e_{-n - 1} - e_{n + 1}) = 0\}$, $f(\mathcal D) = \{v \in \mathcal X \mid \rho(v) \in f(R)\}$, and $f(\Delta) = \{u \in \Xi \mid \pi(u), \rho(u) \in f(R)\}$. Here $T$ acts on $K^{2n + 2}$ as in the naive construction. Hence
$$\widetilde \orth(2n + 1, K) \cong \{g \in \orth(2n + 2, K) \mid g(e_{-n - 1} - e_{n + 1}) = e_{-n - 1} - e_{n + 1}\}.$$
The map $\Dickson \colon \widetilde \orth(2n + 1, K) \to (\mathbb Z / 2 \mathbb Z)(K)$ is the restriction of the Dickson invariant for $\orth(2n + 2, K)$, it clearly satisfies $\det\bigl(1 + \rep(\beta(g))\bigr) = \det\bigl(\alpha(f(g))\bigr) = 1 - 2 \Dickson(g)$. Note that $\rep$ is the action on the orthogonal complement of $e_{n + 1} - e_{-n - 1}$.

The functor $\widetilde \orth(2n + 1, -)$ is given by the equations
\begin{align*}
\sum_{k = -n}^n \beta_{-k, -i} \beta_{kj} + \beta_{0, -i} \beta_{0j} + \beta_{ij} + \beta_{-j, -i} &= 0 \text{ for } i + j > 0,\\
\sum_{k = 0}^n \beta_{-k, i} \beta_{ki} + \beta_{-i, i} &= 0 \text{ for all } i
\end{align*}
on the variable $\beta_{ij}$, where $\beta(g) = \sum_{i, j} \beta_{ij} e_{ij}$, so it is representable. By the Jacobian criterion, $\widetilde \orth(2n + 1, -)$ is smooth over $K$ near the identity section. It follows that $\widetilde \orth(2n + 1, -)$ is smooth of relative dimension $n(2n + 1)$ over any field. Hence the differentials of the equations are linearly independent at every point, i.e. $\widetilde \orth(2n + 1, -)$ is representable by a smooth affine group scheme.

Let $\mathrm Z(K) = \{g \in \widetilde \orth(2n + 1, K) \mid \beta(g) \in \Cent(R)\} = \{(x(k), u(k)) \mid k^2 + k = 0\}$. Since $\Dickson(g) = -k$ for $g = (x(k), u(k)) \in \mathrm Z(K)$, we have a decomposition $\widetilde \orth(2n + 1, K) \cong \Ker(\Dickson) \times (\mathbb Z / 2 \mathbb Z)(K)$. The map $\rep$ induces a morphism between smooth group schemes
$$p \colon \Ker\bigl(\Dickson \colon \widetilde \orth(2n + 1, -) \to \mathbb Z / 2 \mathbb Z\bigr) \to \sorth(2n + 1, -).$$
By the fibral isomorphism criterion, it suffices to show that $p$ is an isomorphism if $K$ is a field. It is easy to see that the differential in the neutral element $d_e p$ is a bijection between the Lie algebras, hence the morphism $p$ is an isogeny with \'etale kernel (recall that $\sorth(2n + 1, -)$ is connected if $K$ is a field). But over arbitrary $K$ any element $g$ from the kernel of $p(K)$ is of type $\beta(g) = k e_{00}$ with $2k = 0$ and $k^2 + k = 0$. Such an element necessary lies in $\mathrm Z(K)$, i.e. is trivial.
\end{proof}

The twisted forms of $\ofalin(n, K^m)$, $\ofasymp(2n, K^m)$, and $\ofaorth(n, K^m)$ in the fppf-topology over $K$ are called classical odd form $K$-algebras. They are augmented and special. A classical odd form $K$-algebra is called split if it is isomorphic to $\ofalin(n, K^m)$, $\ofasymp(2n, K^m)$, or $\ofaorth(n, K^m)$ for some $n, m \geq 0$.

\section{Automorphisms}

Recall that the classical projective group schemes are the factor-groups $\pglin(n, -) = \glin(n, -) / \glin(1, -) = \slin(n, -) / \mu_n$, $\psorth(2n, -) = \sorth(2n, -) / \mu_2$, $\psymp(2n, -) = \symp(2n, -) / \mu_2$ in the sense of group schemes, i.e. as fppf sheaves on the category of $K$-schemes. Also $\psorth(2n + 1, K) = \sorth(2n + 1, K)$.

Let $(R, \Delta)$ be an odd form $K$-algebra. Its projective unitary group $\punit(R, \Delta)$ is the group of its automorphisms over $K$. Similarly, a projective unitary group $\punit(R, \Delta, \mathcal D)$ of an augmented odd form $K$-algebra is its automorphism group, this group satisfies the fpqc sheaf condition if $(\Delta, \mathcal D)$ is universal. If $(R, \Delta, \mathcal D)$ is a special augmented odd form $K$-algebra and $\mathcal D = \Ker(\pi)$ (as in the classical cases), then $\punit(R, \Delta) = \punit(R, \Delta, \mathcal D)$.

There is a natural homomorphism $\unit(R, \Delta) \to \punit(R, \Delta)$, an element $g \in \unit(R, \Delta)$ acts on $(R, \Delta)$ by $\up ga = \alpha(g)\, a\, \alpha(g)^{-1}$ for $a \in R$ and $\up gu = (\gamma(g) \cdot \pi(u) \dotplus u) \cdot \alpha(g)^{-1}$ for $u \in \Delta$. Note that if $(R, \Delta, \mathcal D)$ is an augmented odd form algebra, then the action of $\unit(R, \Delta)$ on $(R, \Delta)$ preserves $\mathcal D$ as a $K$-module, hence we actually have a homomorphism $\unit(R, \Delta) \to \punit(R, \Delta, \mathcal D)$. Also this action gives the ordinary conjugation action of $\unit(R, \Delta)$ on itself.

In order to determine the projective unitary groups of classical odd form algebras we use the classification of automorphisms of reductive group schemes. Let $(X^*, \Phi, X_*, \Phi^\vee)$ be a root datum, $\Delta \subseteq \Phi$ be a base, $G = G(X^*, \Phi, X_*, \Phi^\vee)$ be the corresponding split reductive group scheme over $K$, and $\mathbf{Aut}(G)$ be the automorphism group sheaf of $G$. Then $\mathbf{Aut}(G)$ is representable, the canonical homomorphism $G \to \mathbf{Aut}(G)$ has kernel $\mathbf C(G)$ (the scheme-theoretic center of $G$), and $\mathbf{Aut}(G) \cong G / \mathbf C(G) \rtimes \Aut(X^*, \Phi, X_*, \Phi^\vee; \Delta)$, where the second factor is considered as a discrete group scheme. It is proved in \cite[theorem 7.1.9]{Conrad} and \cite[Exp. XXIV, theorem 1.3]{SGA3}. We denote the abstract group $\Aut(X^*, \Phi, X_*, \Phi^\vee; \Delta)$ by $\mathrm{Out}(G)$ and call it the outer automorphism group of $G$.

The group schemes $\glin(n, -)^m$, $\slin(n, -)^m$, $\sorth(n, -)^m$, $\symp(2n, -)^m$, $\pglin(n, -)^m$, $\psorth(2n, -)^m$, and $\psymp(2n, -)^m$ are reductive for all $n, m \geq 0$. Most of them are semi-simple, hence their outer automorphism groups are subgroups of the automorphism groups of their Dynkin diagrams (certain powers of $\mathsf A_\ell$, $\mathsf B_\ell$, $\mathsf C_\ell$, or $\mathsf D_\ell$). The exceptions are $\glin(n, -)^m$ for $n, m \geq 1$ and the small rank cases $\sorth(2, -)^m$, $\psorth(2, -)^m$ for $m \geq 1$. Obviously, $G(K)^m \cong G(K^m)$ for any group scheme $G$.

The root datum of $\glin(n, -)$ for $n \geq 1$ is given by $X^* = \bigoplus_{i = 1}^n \mathbb Z \mathrm f_i$ and $X_* = \bigoplus_{i = 1}^n \mathbb Z \mathrm e_i$ with $\langle \mathrm e_i, \mathrm f_i \rangle = 1$ and $\langle \mathrm e_i, \mathrm f_j \rangle = 0$ for $i \neq j$. The base $\Delta$ consists of $\alpha_i = \mathrm f_i - \mathrm f_{i + 1}$ for $1 \leq i \leq n - 1$, $\alpha_i^\vee = \mathrm e_i - \mathrm e_{i + 1}$.

\begin{lemma}\label{gl-out}
There is an outer automorphism $\sigma$ of $\glin(n, -)$ for $n \geq 1$ given by $\sigma(\mathrm e_i) = - \mathrm e_{n + 1 - i}$ and $\sigma(\mathrm f_i) = - \mathrm f_{n + 1 - i}$, hence $\sigma(\alpha_i) = \alpha_{n - i}$. It is induced by an automorphism of $\ofalin(n, K)$ of order $2$. Moreover, $\mathrm{Out}(\glin(n, -)) \cong \mathbb Z / 2 \mathbb Z$ is generated by $\sigma$.
\end{lemma}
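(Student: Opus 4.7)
The plan is to establish three things in sequence. First, that $\sigma$ is a root-datum automorphism preserving the base $\Delta$: this is immediate from the computation $\sigma(\alpha_i) = \sigma(\mathrm f_i - \mathrm f_{i+1}) = -\mathrm f_{n+1-i} + \mathrm f_{n-i} = \alpha_{n-i}$, which lies in $\Delta$, together with the observation that $\sigma$ manifestly preserves the pairing $\langle\mathrm e_i, \mathrm f_j\rangle$ since it acts by the same signed reversal on both lattices. Second, construct an explicit order-two automorphism $\tilde\sigma$ of $\ofalin(n, K)$ inducing $\sigma$ on $\unit(\ofalin(n, K)) = \glin(n, K)$. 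Third, classify root-datum automorphisms preserving $\Delta$ to show $\mathrm{Out}(\glin(n, -))$ has exactly two elements; by the cited theorems from SGA3 and Conrad this gives the result.

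For the construction of $\tilde\sigma$, I introduce the involution $\tau\colon \{\pm 1, \ldots, \pm n\} \to \{\pm 1, \ldots, \pm n\}$ defined by $\tau(i) = -\mathrm{sgn}(i)\,(n + 1 - |i|)$. It satisfies $\tau^2 = \mathrm{id}$, swaps positive and negative indices, and obeys $\tau(-i) = -\tau(i)$. I then set $\tilde\sigma(e_{ij}) = e_{\tau(i), \tau(j)}$ on $R$ and $\tilde\sigma(q_i) = q_{\tau(i)}$ on $\Delta$, and extend by additivity and compatibility with $\phi$ (so $\tilde\sigma(\phi(e_{ij})) = \phi(e_{\tau(i), \tau(j)})$). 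The axiom checks are routine: the product rule $e_{ij} e_{jl} = e_{il}$ is $\tau$-equivariant because $\tau$ is a bijection; the involution $\inv{e_{ij}} = e_{-j, -i}$ is preserved thanks to $\tau(-i) = -\tau(i)$; the relations $\pi(q_i) = e_{ii}$, $\rho(q_i) = 0$, and $q_i = q_i \cdot e_{ii}$ transport cleanly; and the $K$-action on the augmentation $\mathcal D$ is preserved. Order two is immediate from $\tau^2 = \mathrm{id}$.

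To identify the induced outer automorphism I restrict to the split diagonal torus. An element $g = \mathrm{diag}(t_1, \ldots, t_n) \in \glin(n, K)$ corresponds to $\alpha(g) = 1 + \sum_{i=1}^{n} (t_i^{-1} - 1)\, e_{ii} + \sum_{i=1}^{n} (t_i - 1)\, e_{-i,-i}$ in $R \rtimes K$, and applying $\tilde\sigma$ yields $\alpha(g')$ with $g' = \mathrm{diag}(t_n^{-1}, \ldots, t_1^{-1})$. Pulling back to the character lattice gives $\tilde\sigma^*(\mathrm f_i) = -\mathrm f_{n+1-i}$, which is precisely $\sigma$. Since this map on $X^*$ is not a signed permutation and hence not realisable by the Weyl group $S_n$, the class of $\tilde\sigma$ in $\mathrm{Out}(\glin(n, -))$ is non-trivial.

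For uniqueness, any root-datum automorphism $\phi$ preserving $\Delta$ induces a Dynkin-diagram automorphism of $\mathsf A_{n - 1}$, which is either trivial or the flip $\alpha_i \mapsto \alpha_{n - i}$; after composing with $\sigma$ I may assume $\phi(\alpha_i) = \alpha_i$ for every $i$. Then $\phi(\mathrm f_i) = \mathrm f_i + c$ for a fixed $c = \sum_k c_k \mathrm f_k \in X^*$ independent of $i$; the requirement that the dual $\phi^\vee$ send each coroot $\mathrm e_k - \mathrm e_{k + 1}$ to a coroot forces $c_1 = \cdots = c_n = c_0$ for some integer $c_0$, and invertibility of $\phi$ on $X^*$ (determinant $1 + n c_0 = \pm 1$) leaves only $c_0 = 0$ when $n \geq 3$, giving $\phi = \mathrm{id}$; the additional solution $n c_0 = -2$ for $n \in \{1, 2\}$ recovers $\sigma$ itself (for $n = 1$ the Dynkin analysis is vacuous and one checks directly that $\sigma = -\mathrm{id}$ exhausts $\mathrm{Aut}(\mathbb Z)$). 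The only step requiring real care is the index bookkeeping in the verification of $\tilde\sigma$ and in tracking the sign in the torus computation; all other steps are elementary.
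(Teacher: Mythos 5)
Your proposal is correct and follows essentially the same route as the paper: your index involution $\tau(i) = -\mathrm{sgn}(i)(n+1-|i|)$ is exactly the paper's automorphism $e_{ij} \mapsto e_{i \pm (n+1),\, j \pm (n+1)}$ of $\ofalin(n,K)$, and your classification of base-preserving root-datum automorphisms (reduce to diagram-fixing ones, then a short integral linear-algebra computation isolating $n \in \{1,2\}$ as the cases where a non-identity diagram-fixing automorphism survives) is the same argument the paper runs with its two parameters $\lambda, \mu$. The only blemish is the phrase ``not a signed permutation'' --- you mean that $\sigma$ is not an \emph{unsigned} permutation and hence not a Weyl element; in any case non-triviality of $\sigma$ in $\mathrm{Out}$ already follows from $\sigma \neq \id$ as a based root-datum automorphism.
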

\begin{proof}
The corresponding automorphism of the augmented odd form algebra $(R, \Delta, \mathcal D) = \ofalin(n, K)$ is given by $\sigma(e_{ij}) = e_{i \pm (n + 1), j \pm (n + 1)}$, it is the non-trivial involution on the center $\Cent(R) \cong K \times K$ of $R$. If $\tau$ is an outer automorphism of $\glin(n, -)$, then we may assume that $\tau(\alpha_i) = \alpha_i$ multiplying by $\sigma$ if necessary, since the Dynkin diagram $\mathsf A_{n - 1}$ is empty or a path graph. Then $\tau$ stabilizes $\Phi$ and $\Phi^\vee$, hence it is determined by the values on $\mathrm e_1$ and $\mathrm f_1$. Since $\tau$ preserve the pairing, we have $\tau(\mathrm e_1) = \mathrm e_1 + \lambda \sum_{i = 1}^n \mathrm e_i$ and $\tau(\mathrm f_1) = \mathrm f_1 + \mu \sum_{i = 1}^n \mathrm f_i$ for some integers $\lambda$ and $\mu$ such that $\lambda + \mu + n \lambda \mu = 0$. It follows that $\lambda$ and $\mu$ divide each other, i.e. $\lambda = \pm \mu$. If $\lambda = -\mu$, then necessarily $\lambda = \mu = 0$ and $\tau = \id$. If $\lambda = \mu \neq 0$, then $2 + n \lambda = 0$, hence $n = 1$ or $n = 2$. It is easy to see that in both cases $\tau = \sigma$.
\end{proof}

Recall a definition of hyperbolic pairs from \cite{UnitK2, OFADefVor}. We say that $\eta = (e_-, e_+, q_-, q_+)$ is a hyperbolic pair in an odd form ring $(R, \Delta)$ if $e_-$ and $e_+$ are orthogonal idempotents in $R$, $\inv{e_+} = e_-$, $q_-$ and $q_+$ lie in $\Delta$, $\pi(q_-) = e_-$, $\rho(q_-) = 0$, $q_- \cdot e_- = q_-$, $\pi(q_+) = e_+$, $\rho(q_+) = 0$, $q_+ \cdot e_+ = q_+$. An orthogonal hyperbolic family $\eta_1, \ldots, \eta_n$ of rank $n$ is a sequence of hyperbolic pairs $\eta_i = (e_{-i}, e_i, q_{-i}, q_i)$ such that $e_{|i|} = e_{-i} + e_i$ are pairwise orthogonal idempotents and $e_{|i|} \in R e_{|j|} R$ for all $1 \leq i, j \leq n$. The augmented odd form $K$-algebras $\ofalin(n, K)$, $\ofasymp(2n, K)$, $\ofaorth(2n + 1, K)$, and $\ofaorth(2n, K)$ have standard orthogonal hyperbolic families of rank $n$ with $\eta_i = (e_{-i, -i}, e_{ii}, q_{-i}, q_i)$. We use the notation $e_i = e_{ii}$ for the split classical odd form $K$-algebras.

Now recall the definitions of elementary transvections and dilations for an odd form ring $(R, \Delta)$ with an orthogonal hyperbolic family $\eta_1, \ldots, \eta_n$. Let $\Delta^0 = \bigl\{u \in \Delta \mid \bigl(\sum_{i = 1}^n |e_i|\bigr)\, \pi(u) = 0\bigr\} \leq \Delta$. An elementary transvection of a short root type is an element $T_{i j}(x) \in \unit(R, \Delta)$ such that
$$\beta(T_{i j}(x)) = x - \inv x, \quad\gamma(T_{ij}(x)) = q_i \cdot x \dotminus q_{-j} \cdot \inv x \dotminus \phi(x)$$
for any $i \neq 0$, $j \neq 0$, $i \neq \pm j$, and $x \in e_i R e_j$. An elementary transvection of an ultrashort root type is an element $T_i(u) \in \unit(R, \Delta)$ such that
$$\beta(T_i(u)) = \rho(u) + \pi(u) - \inv{\pi(u)}, \quad\gamma(T_i(u)) = u \dotminus \phi(\rho(u) + \pi(u)) \dotplus q_{-i} \cdot (\rho(u) - \inv{\pi(u)})$$
for any $i \neq 0$ and $u \in \Delta^0 \cdot e_i$. An elementary dilation is an element $D_i(a) \in \unit(R, \Delta)$ such that
$$\beta(D_i(a)) = a + \inv a^{-1} - e_{|i|}, \quad\gamma(D_i(a)) = q_{-i} \cdot (\inv a^{-1} - e_{-i}) \dotplus q_i \cdot (a - e_i) \dotminus \phi(a - e_i)$$
for any $i \neq 0$ and $a \in (e_i R e_i)^*$, here $e_i R e_i$ is a unital associative ring. Also, $D_0(g) = g$ for $g \in \unit(e_0 R e_0, \Delta^0 \cdot e_0)$. All these elements actually lie in the unitary group $\unit(R, \Delta)$.

Clearly, in the cases $(R, \Delta, \mathcal D) = \ofaorth(2n, K)$ or $(R, \Delta, \mathcal D) = \ofaorth(2n + 1, K)$ the elementary transvections and dilations except $D_0(-)$ lie in the special orthogonal group. In the case $(R, \Delta, \mathcal D) = \ofalin(n, K)$ all elementary transvections lie in the special linear group, a product $D_1(a_1 e_{11}) \cdots D_n(a_n e_{nn})$ lie in $\slin(n, K)$ if and only if $a_1 \cdots a_n = 1$.

\begin{lemma}\label{density-torus}
If $k_1, \ldots, k_n$ are distinct nonzero integers, then the elements $(a^{k_1} - 1, \ldots, a^{k_n} - 1)$ for $a \in E^*$ generate the fppf sheaf $E \mapsto E^n$ as a sheaf of non-unital algebras.
\end{lemma}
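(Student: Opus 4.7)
The plan is to reduce the lemma to a generalized Vandermonde argument over a suitable fppf cover. Denote $v_a := (a^{k_1} - 1, \dots, a^{k_n} - 1) \in E^n$. Given a section $b \in E^n$, we will construct a cover $E \to E'$ carrying $n$ units $a_1, \dots, a_n \in (E')^*$ for which the matrix $M := (a_l^{k_i} - 1)_{1 \le i, l \le n}$ is invertible in $E'$; over $E'$ the vectors $v_{a_1}, \dots, v_{a_n}$ then form an $E'$-basis of $(E')^n$, so $b$ is an $E'$-linear combination of them. Since a subsheaf of non-unital algebras of $E \mapsto E^n$ is in particular closed under the $\mathcal{O}$-module structure, this linear expression shows $b$ lies in the generated subsheaf at $E'$, and the sheaf property descends this back to $E$.

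The technical heart is to show that $\Delta := \det M$, viewed as a Laurent polynomial in the formal variables $a_1, \dots, a_n$ with integer coefficients, has at least one monomial whose coefficient is $\pm 1$ (so that it stays nonzero over every residue field of $E$). Expanding by multilinearity in the rows one writes $\Delta = \sum_{S \subseteq \{1, \dots, n\}} (-1)^{|S|} \det A_S$, where $A_S$ is the matrix $(a_l^{k_i})_{i,l}$ with each row indexed by $S$ replaced by the all-ones row $(1, \dots, 1)$. The Leibniz expansion of $\det A_\emptyset$ consists of monomials $\mathrm{sgn}(\sigma) \prod_l a_l^{k_{\sigma^{-1}(l)}}$, $\sigma \in S_n$, whose multidegrees are pairwise distinct permutations of $(k_1, \dots, k_n)$ (because the $k_i$ are distinct), so each appears with coefficient $\pm 1$. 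For nonempty $S$, every monomial in $\det A_S$ has at least one zero entry in its multidegree (coming from an all-ones row); since all $k_i$ are nonzero, no such multidegree can arise from $\det A_\emptyset$. Hence the $\pm 1$ monomials of $\det A_\emptyset$ survive into $\Delta$ without cancellation, and $\Delta \neq 0$ in every characteristic.

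Granting this, take $E' := E[a_1^{\pm 1}, \dots, a_n^{\pm 1}, \Delta^{-1}]$, a localization of the Laurent polynomial ring $E[a_l^{\pm 1}]$. It is a finitely presented flat $E$-algebra, and faithfully flat because the unit-coefficient monomial ensures $\Delta$ stays nonzero over every residue field of $E$. Over $E'$ the matrix $M$ is invertible, so the first paragraph concludes. The main obstacle is the combinatorial non-cancellation for $\Delta$, precisely where the hypotheses that the $k_i$ are pairwise distinct and individually nonzero enter in an essential way.
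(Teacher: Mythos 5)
Your proof is correct, and it takes a genuinely different route from the paper's. The paper first reduces to $n=2$ (using the multiplicative structure: once each pair of coordinates can be separated, products of the resulting elements yield the standard idempotents of $K^n$), and then builds an explicit two-stage fppf tower $K \to K[\theta,\theta^{-1}]/(\theta^N - (\theta^{k_1}-1)(\theta^{k_2}-\theta^{k_1})) \to E[\psi,\psi^{-1}]/(\psi^{k_2}-u)$ in which the relevant differences become units and a $k_2$-th root can be extracted, producing $(1,0)$ directly. You instead treat all $n$ coordinates at once: adjoin $n$ independent Laurent variables and invert the generalized Vandermonde determinant $\Delta = \det(a_l^{k_i}-1)$; your non-cancellation argument (the multidegrees of $\det A_\emptyset$ are the distinct permutations of $(k_1,\dots,k_n)$, all with nonzero entries, while every monomial of each $\det A_{\{i\}}$ has a zero entry — note the terms with $|S|\ge 2$ vanish outright) correctly shows $\Delta$ has unit coefficients, so the cover is faithfully flat and the $v_{a_l}$ form a basis. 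Your argument is uniform in $n$ and uses only the $\mathcal O$-module structure of the subsheaf, so it actually proves the stronger statement that the $v_a$ generate $E\mapsto E^n$ as a sheaf of $\mathcal O$-modules; the paper's argument is shorter to state but leans on the ring multiplication and on an ad hoc choice of cover. Both establish the lemma.
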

\begin{proof}
Let $\mathcal F$ be the subsheaf generated by these elements. If suffices to show that $\mathcal F(K)$ contains the standard idempotents of $K^n$. Without loss of generality, $n = 2$ and $k_1 < k_2$. Consider the fppf extension $E = K[\theta, \theta^{-1}] / (\theta^N - (\theta^{k_1} - 1) (\theta^{k_2} - \theta^{k_1}))$ for sufficiently large $N$. In this extension $\theta^{k_1} - 1$ and $\theta^{k_2} - \theta^{k_1}$ are invertible, hence $(1, 1 + u) \in \mathcal F(E)$ for $u = \frac{\theta^{k_2} - \theta^{k_1}}{\theta^{k_1} - 1} \in E^*$. It follows that $(a^{k_1}, a^{k_2} + u) \in \mathcal F(E')$ for any extension $E' / E$ and $a \in {E'}^*$. But for $E' = E[\psi, \psi^{-1}] / (\psi^{k_2} - u)$ we have $(1, 0) \in \mathcal F(E')$, hence $(1, 0), (0, 1) \in \mathcal F(K)$.
\end{proof}

\begin{lemma}\label{density}
Let $G$ be one of the group schemes $\glin(n, -)^m$, $\slin(n, -)^m$, $\sorth(n, -)^m$, or $\symp(2n, -)^m$ over $K$. Let $(R, \Delta, \mathcal D)$ be the corresponding split classical odd form $K$-algebra. Then $G$ generates the sheaf $(-) \otimes_K (R, \Delta)$ of odd form algebras unless $G$ is on of the exceptions $\sorth(1, -)^m$, $\sorth(2, -)^m$, $\slin(1, -)^m$, and $\slin(2, -)^m$ for $m \geq 1$.
\end{lemma}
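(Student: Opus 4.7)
The plan is to reduce to $m = 1$ and then exhibit enough elements of $G$ explicitly whose $\beta$- and $\gamma$-components generate $R$ and $\Delta$ under the odd form algebra operations. For the reduction, note that $K^m = K \times \cdots \times K$ forces both $G = G_0^m$ and $\ofalin(n, K^m) = \ofalin(n, K)^m$ (and similarly for the other cases) to decompose as products of $m$ independent factors, in the sense of the decomposition for $2$-step nilpotent modules over product rings discussed after proposition \ref{2-mod-descent}. Generation for each factor therefore implies generation of the product, so we may assume $m = 1$.

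The generating set will consist of the elementary transvections $T_{ij}(x)$ of short-root type (for $i \neq \pm j$), the ultrashort-root transvections $T_i(u)$, and the elementary dilations $D_i(a)$. All short-root and ultrashort-root transvections lie in $G$ because they have trivial Dickson invariant and determinant $1$, acting as the identity outside a small block of indices. The elementary dilations $D_i(a)$ lie in $\glin$; for $\slin$ and $\sorth$ we use balanced products such as $D_i(a) D_{-i}(\inv{a}^{-1})$, or longer products of dilations along different coordinates, so that the total determinant equals $1$ and (in the orthogonal case) the Dickson invariant vanishes.

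Generation of $R$ then proceeds as follows. From $T_{ij}(x e_{ij})$ one reads off $\beta = x e_{ij} - \inv{x e_{ij}}$; multiplying two such elements with indices $(i, j)$ and $(j, l)$ chosen so that only the $e_{ij} e_{jl} = e_{il}$ term and its involute survive among the four cross-products yields $xy e_{il} \pm \inv{xy\, e_{il}}$. Since $e_{il}$ and $\inv{e_{il}}$ sit in distinct summands of the $K$-basis of $R$ whenever they are distinct basis vectors, each can be separated by projecting onto the relevant summand, so we capture every off-diagonal $e_{il}$ with arbitrary coefficient. Diagonal entries $e_{ii}$ arise from $\beta(D_i(a)) = (a - e_i) + (\inv{a}^{-1} - e_{-i})$: applying lemma \ref{density-torus} to the exponents of the scalar parameter of $a$ shows that these elements generate the full diagonal subring $\bigoplus_i K e_{ii}$ as a non-unital algebra in the fppf sheaf sense. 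Combined, we obtain all of $R$ as an involutive non-unital ring. For $\Delta$, the $\gamma$-component of $T_{ij}(x)$ contains $q_i \cdot x$ modulo $\phi$-terms already in the ring-generated piece, so varying $x$ produces all $q_i \cdot r$ for $r \in e_i R$; the ultrashort transvections $T_i(u)$ supply the $\Delta^0 \cdot e_i$ part; and the odd form algebra operations then recover all of $\Delta$ from this data.

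The main obstacle is the combinatorial verification that the index manipulations needed for the short-root multiplication argument actually succeed, and that the balanced-dilation construction is available; both require that there be at least three (in the linear case) or four (in the orthogonal case) distinct indices to play with. This is where the exceptions arise: $\slin(1, -)$ and $\sorth(1, -)$ are trivial group schemes generating nothing, while in $\slin(2, -)$ and $\sorth(2, -)$ the Weyl root system is either empty or rank one, so no short-root transvections $T_{ij}$ with $i \neq \pm j$ are available, and the determinant constraint prevents dilations from isolating individual $e_{ii}$. That no exception occurs for $\symp(2, -)$ will follow because the ultrashort transvections $T_i(v_i)$ and the corresponding action relations in $\ofasymp(2, K)$ already supply the diagonal data that is missing in the other low-rank cases.
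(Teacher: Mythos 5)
Your overall strategy is the one the paper uses: reduce to $m=1$, extract the diagonal idempotents from the dilations via lemma \ref{density-torus}, obtain the off-diagonal matrix units from short-root transvections, and recover the rest of $(R,\Delta)$ from ultrashort transvections and the odd form operations. However, two steps need repair. The lighter one: ``separated by projecting onto the relevant summand'' is not an operation available inside the subsheaf generated by $G$ --- that subsheaf is closed under addition, multiplication, the involution and $K$-scalars, not under $K$-module projections onto basis summands of $R$. The correct mechanism (implicit in the paper's ``hence'') is to multiply $e_{ij} + \eps e_{-j,-i}$ on the left by $e_i$ and on the right by $e_j$, using the diagonal idempotents already produced by the dilations; since $i \neq -j$ this kills the involute term.

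The more serious gap is $\sorth(3,-)$. Your stated criterion --- at least four distinct indices in the orthogonal case --- would make $\sorth(3,-)$ an exception, but the lemma asserts it is not one. For $n=1$ there are no short-root transvections $T_{ij}$ with $i \neq \pm j$ at all, and no index $j \notin \{0,\pm i\}$ with which to write $e_{-i,i} = e_{-i,j} e_{ji}$, so your argument produces only the two diagonal idempotents $e_{\pm 1}$ and none of the entries involving the index $0$ or the anti-diagonal. The paper rescues this case (and the entries $e_{0i}$, $e_{i0}$, $e_{-i,i}$, $e_{00}$ of every $\ofaorth(2n+1,K)$) with the ultrashort transvections $T_i(u_i \cdot x)$; you invoke that mechanism only for $\symp(2,-)$, and as written your proof establishes nothing for $\sorth(3,-)$. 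A minor further point: your balanced-dilation route to the diagonal in the $\slin$ case needs exponents $k_1,\dots,k_n$ with $\sum_i k_i = 0$ and all of $\pm k_1,\dots,\pm k_n$ distinct and nonzero before lemma \ref{density-torus} applies (possible exactly when $n \geq 3$, e.g.\ $1,2,-3$), and this should be said; the paper avoids the issue by getting $e_{ij} = (e_{ik} - e_{-k,-i})(e_{kj} - e_{-j,-k})$ and then $e_i = e_{ij} e_{ji}$ purely from transvections.
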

\begin{proof}
Without loss of generality, $m = 1$ and $n > 0$. Let $\bigl(S(-), \Theta(-)\bigr)$ be the subsheaf of $(-) \otimes_K (R, \Delta)$ generated by $G$. Suppose at the moment that $G$ is not a special linear group scheme. Note that since $D_i(a e_i) \in G(E)$ for all $a \in E^*$ and $i \neq 0$, we have $(a^{-1} - 1) e_{-i} + (a - 1) e_i \in S(E)$ for any extension $E / K$. By lemma \ref{density-torus} both $e_{-i}$ and $e_i$ lie in $S(K)$ for $i \neq 0$.

If $G = \slin(n, -)$ and $n \geq 3$, then $\beta(T_{ij}(xe_{ij})) = xe_{ij} - x e_{-j, -i} \in S(K)$ for all $i j > 0$, $i \neq j$, $x \in K$. Choose an index $k$ different from $i$ and $j$ but with the same sign, then $(e_{ik} - e_{-k, -i}) (e_{kj} - e_{-j, -k}) = e_{ij} \in S(K)$ and $e_i = e_{ij} e_{ji} \in S(K)$. For simplicity, let $e_{ij} = 0$ in both linear cases for $ij < 0$.

Next consider the elementary transvections $T_{ij}(x e_{ij})$ for arbitrary $G$. It follows that $e_{ij} + \eps e_{-j, -i} \in S(K)$ for all $i \neq \pm j$ and $i, j \neq 0$, here $\eps = \pm 1$ depends on $G$. Hence $e_{ij} \in S$ for all $i, j \neq 0$ and $i \neq -j$. Also $e_{-i, i} = e_{-i, j} e_{ji} \in S(K)$ if the exists an index $j$ different from $0$ and $\pm i$. Such an index does not exist in the cases $\glin(1, -)$ (where we do not need $e_{-i, i}$), $\symp(2, -)$, $\sorth(2, -)$, and $\sorth(3, -)$. For $\symp(2n, -)$ using the elementary transvections $T_i(v_i \cdot x)$ we obtain $e_{-i, i} \in S(K)$ for all $i \neq 0$. For $\sorth(2n + 1, -)$ similarly using $T_i(u_i \cdot x)$ we get $e_{0i}, e_{i0}, e_{-i, i} \in S(K)$ for all $i \neq 0$ (and, consequently, $e_{00} \in S(K)$).

Since $T_{ij}(x e_{ij}) \in G(K)$, we also have $q_i \in \Theta(K)$ for all $i \neq 0$. It remains to note that $v_i \in \Theta(K)$ if $G = \symp(2n, -)$ and $u_i \in \Theta(K)$ if $G = \sorth(2n + 1, -)$ for all $i \neq 0$, because $T_i(u) \in \Theta(K)$ for all admissible $u$. In the case of $\sorth(2n + 1, -)$ we finally have $u_0 = u_1 \cdot e_{10} \in \Theta(K)$.
\end{proof}

The group schemes $\sorth(1, -)$ and $\slin(1, -)$ are trivial and generate zero sheaves of odd form subalgebras in $\ofaorth(1, -)$ and $\ofalin(1, -)$. There are obvious injective morphisms of sheaves of augmented odd form algebras $\ofasymp(2, -) \to \ofalin(2, -)$ and $\ofalin(1, -) \to \ofaorth(2, -)$ inducing the isomorphisms $\symp(2, -) \cong \slin(2, -)$ and $\glin(1, -) \cong \sorth(2, -)$, hence the group schemes $\slin(2, -)$ and $\sorth(2, -)$ generate the images of these morphisms.

The following lemma implies that every non-trivial classical odd form $K$-algebra is a ``simple'' classical odd form $E$-algebra (i.e. a twisted form of $\ofalin(n, E)$, $\ofasymp(2n, E)$, or $\ofaorth(n, E)$) over a uniquely determined finite \'etale extension $E$ of $K$, see the description of centers of split classical odd form algebras.

\begin{lemma}\label{fin-et-aut}
Let $K$ be a commutative ring and $m \geq 0$. Then the automorphism group of the $K$-module $K^m$ with the operation $(x_i)_i \mapsto (x_i^2)_i$ is isomorphic to the group of $K$-points of the discrete group scheme $\mathrm S_m$, it coincides with the automorphism group of the $K$-algebra $K^m$.
\end{lemma}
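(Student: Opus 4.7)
The plan is to establish the two identifications in sequence: the automorphism group of the $K$-algebra $K^m$ equals $\mathrm S_m(K)$, and this coincides with the automorphism group of the $K$-module $K^m$ equipped with the squaring operation $\sigma \colon (x_i)_i \mapsto (x_i^2)_i$. Since $\sigma$ is defined from the ring multiplication, any $K$-algebra automorphism automatically commutes with $\sigma$, giving one of the inclusions for free.

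For the algebra-automorphism computation, I would use that $K^m$ is \'etale over $K$ with standard primitive orthogonal idempotents $e_1, \ldots, e_m$ that form, Zariski-locally on $\Spec K$, the unique such decomposition of $1$: any $K$-algebra automorphism permutes this set locally, which is exactly the data of a locally constant function $\Spec K \to \mathrm S_m$, i.e.\ a $K$-point of the discrete group scheme. For the reverse inclusion, given a $K$-module automorphism $\phi$ commuting with $\sigma$, I would represent $\phi$ in the standard basis by a matrix $A = (a_{ij})$ and interpret the compatibility $\phi \sigma = \sigma \phi$ as a natural-transformation identity (equivalently, a polynomial identity over the universal extension $K[t_1, \ldots, t_m]$), extracting that each $a_{ij}$ is an idempotent of $K$ together with cross relations $2 a_{ij} a_{kj} = 0$ for $i \neq k$. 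When $2$ is a non-zerodivisor in $K$, the polarization $2xy = \sigma(x+y) - \sigma(x) - \sigma(y)$ then upgrades $\phi$ to a $K$-algebra homomorphism, and a Zariski-local analysis forces $A$ to be a permutation matrix, closing the loop with the algebra side.

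The main obstacle will be the characteristic-$2$ case, where the cross relations are vacuous and polarization is degenerate, so that the bare data of a $K$-module automorphism commuting with $\sigma$ no longer immediately forces multiplicativity. I expect to address this by representing the automorphism functor as a closed affine subgroup scheme $\mathbf G \subseteq \mathbf{GL}_m$ over $\Spec \mathbb Z$, noting that $\mathbf G$ contains $\mathrm S_m$ as a closed subgroup and coincides with it on the open locus $\Spec \mathbb Z[1/2]$, and invoking a flatness or fibre-dimension argument to extend the identification $\mathbf G = \mathrm S_m$ across all of $\Spec \mathbb Z$ and hence to an arbitrary base $K$. Failing that, one could invoke additional natural data preserved by the intended automorphisms (such as the unit section of $K^m$) together with further compatibility extracted from $\phi \sigma = \sigma \phi$ on longer sums of basis vectors to cut down the local possibilities in characteristic $2$ to permutations.
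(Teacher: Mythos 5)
Your treatment of the case where $2$ is a non-zerodivisor is essentially the paper's argument: from the commutation with squaring one reads off that the matrix entries are idempotent and that distinct entries in a common row (in the paper's indexing) multiply to zero, whence the matrix is a permutation matrix Zariski-locally; the paper records exactly these two relations and concludes in one line. Your identification of the algebra-automorphism group with $\mathrm S_m(K)$ via local permutation of the primitive idempotents is also the standard argument and is fine.

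The genuine gap is the characteristic-$2$ case, and the repair you propose cannot work. The honest consequence of $\phi\sigma=\sigma\phi$ is only $2a_{ij}a_{ik}=0$, exactly as you derive, and this cannot be upgraded: over $K=\mathbb F_2$ the matrix $\sMat 1101$ is a $K$-linear automorphism of $K^2$ commuting with coordinatewise squaring (squaring is additive in characteristic $2$), and it remains so after every base change, yet it is not a permutation and not an algebra automorphism. Hence the closed subgroup scheme $\mathbf G\subseteq\mathbf{GL}_m$ over $\Spec\mathbb Z$ that you want to compare with $\mathrm S_m$ is strictly larger on the fibre over $(2)$, so no flatness or fibre-dimension argument can identify the two there; your primary fallback is a dead end. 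Only your secondary fallback is viable: one must feed in structure beyond the bare pair ($K$-module, squaring) --- for instance preservation of the unit $(1,\dots,1)$ or of the multiplication --- in order to force the orthogonality relation $a_{ij}a_{ik}=0$ without the factor of $2$. That relation is precisely what the paper's proof asserts outright ($g_{ij}g_{ik}=0$ for $j\neq k$), and it is the one step that does not follow from the hypothesis as literally stated; so your proposal correctly isolates the crux but does not close it, and closing it requires strengthening the data being preserved rather than finding a better specialization argument.
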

\begin{proof}
If $g \in \glin(n, K)$ preserves this operation, then $g_{ij}^2 = g_{ij}$ for all $i, j$ and $g_{ij} g_{ik} = 0$ for $j \neq k$. Hence $g$ is a monomial matrix locally in the Zariski topology.
\end{proof}

\begin{theorem}\label{ofa-aut}
Let $G$ be one of the group schemes $\glin(n, -)^m$, $\slin(n, -)^m$, $\symp(2n, -)^m$, $\sorth(n, -)^m$, $\pglin(n, -)^m$, $\psymp(2n, -)^m$, or $\psorth(n, -)^m$ over a commutative ring $K$. Let $(R, \Delta, \mathcal D)$ be the corresponding split classical odd form $K$-algebra. Then the canonical map $\punit(R, \Delta) \to \Aut(G)$ is an isomorphism unless $G$ is one of the exceptions $\sorth(2, -)^m$, $\psorth(2, -)^m$, $\psorth(8, -)^m$, $\slin(1, -)^m$, $\slin(2, -)^m$, $\pglin(1, -)^m$, $\pglin(2, -)^m$ for $m \geq 1$ or $\glin(n, -)^m$, $\sorth(1, -)^m$, $\psorth(1, -)^m$, $\psorth(4, -)^m$ for $n \geq 1$, $m \geq 2$.
\end{theorem}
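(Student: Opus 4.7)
The plan is to establish injectivity and surjectivity of the canonical map $\punit(R, \Delta, \mathcal D) \to \Aut(G)$ separately. Injectivity follows from the density lemma \ref{density} when $G$ is one of $\glin, \slin, \symp, \sorth$ and is not an exception of that lemma: since $G$ then generates the fppf sheaf $(-) \otimes_K (R, \Delta, \mathcal D)$ of augmented odd form algebras, any $\varphi \in \punit(R, \Delta, \mathcal D)(K)$ trivial on $G$ is trivial on a generating family and hence equal to $\id$. For the projective variants $\pglin, \psymp, \psorth$ this reduces to the simply connected case, since the center of $\unit(R, \Delta)$ acts trivially on $(R, \Delta, \mathcal D)$ and therefore the simply connected cover of $G$ already generates the sheaf. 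The handful of low-rank theorem-allowed cases excluded by the density lemma are handled by direct inspection of the small odd form algebras.

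For surjectivity, I would invoke the structure theorem $\mathbf{Aut}(G) \cong G / \mathbf C(G) \rtimes \mathrm{Out}(G)$ recalled before the theorem and construct preimages of the two parts. The inner part $G / \mathbf C(G)$ is hit through the composition $\unit(R, \Delta) \to \punit(R, \Delta, \mathcal D) \to \Aut(G)$: on the natural copy of $G \subseteq \unit(R, \Delta)$ (an equality for $\glin, \symp$ and the orthogonal types, the determinant-one subgroup for $\slin$, and a faithful quotient in the projective cases) this composition is the standard conjugation map coming from the naive construction, whose image is precisely $G / \mathbf C(G)$.

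For the outer part I would proceed case by case. For $\symp, \psymp$ and $\sorth(2n + 1, -)$ the Dynkin diagrams $\mathsf C_n$ and $\mathsf B_n$ have no non-trivial graph automorphisms and nothing further is needed. For $\glin(n, -), \slin(n, -)$ and their projective variants with $n \ge 3$, lemma \ref{gl-out} already produces an order-two automorphism of $\ofalin(n, K)$ realising the $\mathsf A_{n - 1}$-involution. For $\sorth(2n, -), \psorth(2n, -)$ with $n \ge 3, n \ne 4$, and for $\sorth(8, -)$ itself (which receives only the $\mathbb Z/2$ subgroup of the $S_3$-triality rather than the full $S_3$), the $\mathsf D_n$-involution is induced by the reflection of the underlying split classical quadratic module swapping $e_n$ with $e_{-n}$, which extends to an involution of $\ofaorth(2n, K)$ by the obvious renaming of generators. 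Finally, permutations of the $m$ factors of $G^m$ correspond to permutations of the summands of $\ofalin(n, K)^m, \ofasymp(2n, K)^m$ or $\ofaorth(n, K)^m$.

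The main obstacle is the bookkeeping that identifies the exclusions as genuine and verifies that no further outer automorphism has been overlooked in the admitted cases. The triality of $\psorth(8, -)^m$ produces order-three outer automorphisms that would have to interchange the vector representation with the two half-spin representations; these live in inequivalent Azumaya algebras and so cannot be induced by any algebra automorphism. For $\glin(n, -)^m$ with $m \ge 2$ there are additional outer automorphisms twisting factor $i$ by a power of the determinant character of factor $j$, and for $\psorth(4, -)^m$ with $m \ge 2$ the exceptional isogeny $\psorth(4, -) \cong \pglin(2, -) \times \pglin(2, -)$ produces additional factor permutations; neither kind preserves the tensor decomposition of the corresponding augmented odd form algebra. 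The remaining low-rank exceptions $\sorth(\le 2, -)^m$, $\slin(\le 2, -)^m$ and their projective variants either have non-semisimple automorphism group schemes or coincide with other classical groups whose odd form algebras admit different presentations, so that the density lemma or the identification $G \subseteq \unit(R, \Delta)$ does not apply in the form required.
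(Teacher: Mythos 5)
Your proposal is correct and follows essentially the same route as the paper: injectivity via the density lemma (with the trivial low-rank cases checked by hand), surjectivity via the decomposition $\mathbf{Aut}(G) \cong G/\mathbf C(G) \rtimes \mathrm{Out}(G)$, realizing the inner part through $\unit(R,\Delta)$ and the outer part case by case (lemma \ref{gl-out} for type $\mathsf A$, a Dickson-invariant-one reflection for type $\mathsf D$, factor permutations for powers). The extra detail you supply on why the listed exceptions genuinely fail goes slightly beyond what the paper writes out, but the core argument is identical.
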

\begin{proof}
In the cases $G = \sorth(1, -)$ and $G = \psorth(1, -)$ we cannot apply lemma \ref{density}, but then the group schemes $G$, $\mathbf{Aut}(G)$, and $\punit(\ofaorth(1, -))$ are trivial. Indeed, every automorphism of $\ofaorth(1, -)$ is trivial by lemma \ref{fin-et-aut} and the description of this augmented odd form $K$-algebra.

Now let us deal with the case when $G$ is a subgroup of a unitary group sheaf. The homomorphism $\punit(R, \Delta) \to \Aut(G)$ is injective by lemma \ref{density}. Hence there is a sequence of embeddings
$$G(K) / \mathbf C(G)(K) \leq \punit(R, \Delta) \leq \Aut(G).$$
It suffices to prove that all outer automorphisms of $G$ come from $\punit(R, \Delta)$. In the case $G = \glin(n, -)$ the generator of the outer automorphism group is given by $\sigma \in \punit(\ofalin(n, K))$ from lemma \ref{gl-out} if $n > 0$ (and the outer automorphism group is trivial if $n = 0$). Similarly, $\mathrm{Out}(\slin(n, -)^m) \cong (\mathbb Z / 2 \mathbb Z)^m \rtimes \mathrm S_m$ is embedded into $\punit(\ofalin(n, K^m))$ for $n \geq 3$. In the cases $G = \symp(2n, -)^m$ and $G = \sorth(2n + 1, -)^m$ the outer automorphism groups are the group of $K$-points of the discrete group scheme $\mathrm S_m$. In the case $G = \sorth(2n, -)$ the generator is given by an element of $\orth(2n, K) = \unit(R, \Delta)$ with Dickson invariant $1$ if $n > 0$ (and the outer automorphism group is trivial if $n = 0$), here $G = \sorth(8, -)$ is not an exception. For example, one may take a reflection in the orthogonal complement of the vector $e_1 - e_{-1}$. Hence the group $\mathrm{Out}(\sorth(2n, -)^m) \cong (\mathbb Z / 2 \mathbb Z)^m \rtimes \mathrm S_m$ is embedded into $\punit(\ofaorth(2n, K^m))$ for $n \geq 2$.

Now consider the case when $G$ is of a projective type. As the first part of the proof shows, $G$ is an open subgroup of finite index in $\punit\bigl((-) \otimes (R, \Delta, \mathcal D)\bigr)$. It has connected fibers, hence it is characteristic. We have a sequence of homomorphisms
$$G(K) \to \punit(R, \Delta) \to \Aut(G),$$
so it suffices to show that all outer automorphisms of $G$ come from unique cosets of $G(K)$ in $\punit(R, \Delta)$. The outer automorphisms of $G$ are the same as the outer automorphisms of the corresponding semi-simple subgroup of the unitary group scheme considered above. The additional exceptions are $\psorth(8, -)^m$ for $m \geq 1$ with the outer automorphism group $\mathrm S_3^m \rtimes \mathrm S_m$ and $\psorth(4, -)^m$ for $m \geq 2$ with the outer automorphism group $\mathrm S_{2m}$. The description of cosets follows from the first part of the proof. 
\end{proof}

From the theorem we obtain a description of twisted forms of classical groups. If $G$ is one of the group schemes from the theorem, then its twisted forms in the fppf topology over $K$ are classified by the non-abelian cohomology set $\check{\mathrm H}^1_{\mathrm{fppf}}(K, \mathbf{Aut}(G))$. Similarly, twisted forms of the corresponding classical odd form algebra $(R, \Delta, D)$ are classified by $\check{\mathrm H}^1_{\mathrm{fppf}}\bigl(K, \punit\bigr((-) \otimes_K (R, \Delta, \mathcal D)\bigl)\bigr)$. By the theorem there is a canonical bojection between these sets. This means that every twisted form of $G$ is obtained from a unique twisted form of $(R, \Delta, \mathcal D)$ (as a subgroup of the unitary group or the projective unitary group). In particular, every twisted form of a classical semi-simple group scheme of adjoint type is an open subscheme of some projective unitary group scheme.

If $G$ is a proper subgroup of $\unit\bigl((-) \otimes_K (R, \Delta, \mathcal D)\bigr)$ from the theorem, then the additional equation descents to twisted forms of $(R, \Delta, \mathcal D)$. The Dickson map after descent takes values in a twisted form of $(\mathbb Z / 2 \mathbb Z)^m$, the determinant in the linear case takes values in a twisted form of the torus $\glin(1, -)^m$.

\section{Other classical groups}

In this section we study the general unitary groups and the spin groups.

Let $(M, B, q)$ be a quadratic module over an even quadratic $K$-algebra $(R, L, A)$. Its general unitary group is
$$\gunit(M, B, q) = \{g \in \Aut_R(M) \mid \text{exists } \lambda \in K^* \text{ such that } B_M(gm, gm') = \lambda B_M(m, m'), q_M(gm) = \lambda q_M(m)\}.$$

Hence we have the groups schemes $\gorth(n, -)$, $\gsymp(2n, -)$, and a funny group scheme $\gglin(n, -)$.

There are decompositions $\gorth(2n, -) \cong \orth(2n, -) \rtimes \glin(1, -)$, $\gsymp(2n, -) \cong \symp(2n, -) \rtimes \glin(1, -)$, and $\gglin(n, -) \cong \glin(n, -) \times \glin(1, -)$ for $n > 0$. Also $\gorth(2n + 1, -) \cong \sorth(2n + 1, -) \times \glin(1, -)$ for any $n$. If $n > 0$, then we may continue the Dickson map to a homomorphism $\Dickson \colon \gorth(2n, -) \to \mathbb Z / 2 \mathbb Z$ such that $\det(g) = (1 - 2 \Dickson(g)) \lambda(g)^n$, where $\lambda \colon \gorth(2n, -) \to \glin(1, -)$ is the map from the definition. Its kernel is denoted by $\gsorth(2n, -)$, it is isomorphic to $\sorth(2n, -) \rtimes \glin(1, -)$. Let $\gorth(0, K) = \gsymp(0, K) = \gsorth(0, K) = \gglin(0, K) = 1$.

Let $(R, \Delta) \subseteq (T, \Xi)$ be a pair of odd form algebras. A general unitary group $\gunit(T, \Xi; R, \Delta)$ consists of elements $g \in \unit(T, \Xi)$ such that $\up g R = R$ and $\up g \Delta = \Delta$, it is an overgroup of $\unit(R, \Delta)$. There is an obvious homomorphism $\gunit(T, \Xi; R, \Delta) \to \punit(R, \Delta)$.

We say that the pairs of augmented odd form algebras $\ofasymp(2n, K) \subseteq \ofalin(2n, K)$, $\ofaorth(2n, K) \subseteq \ofalin(2n, K)$, and $\ofalin(n, K) \subseteq \ofalin(n, K \times K)$ are classical (as well as their twisted forms), the inclusions are given by the naive construction. More explicitly, the maps $\ofasymp(2n, K) \to \ofalin(2n, K)$ and $\ofaorth(2n, K) \to \ofalin(2n, K)$ are given by $e_{ij} \mapsto e_{ij} \pm e_{\inv{-i} \inv{-j}}$, where the ring of $\ofalin(2n, K)$ is $T = \bigoplus_{0 < |i|, |j| \leq n} Ke_{\inv{\,i\,} \inv{\,j\,}} \oplus \bigoplus_{0 < |i|, |j| \leq n} Ke_{ij}$ with the involution $\inv{e_{ij}} = e_{\inv{\,j\,} \inv{\,i\,}}$ (here $\inv{\inv{\,i\,}} = i$) and the sign is taken from the formulas for the involution on $R$. It is clear that the maps $\gsymp(2n, K) \to \gunit(\ofalin(2n, K), \ofasymp(2n, K))$, $\gorth(2n, K) \to \gunit(\ofalin(2n, K), \ofaorth(2n, K))$, and $\gglin(n, K) \to \gunit(\ofalin(n, K \times K), \ofalin(n, K))$ are isomorphisms.

If $(R, \Delta) \subseteq (T, \Xi)$ is a pair of odd form algebras, then $\Aut(T, \Xi; R, \Delta)$ is the subgroup of $\punit(T, \Xi)$ stabilizing $(R, \Delta)$. In the classical cases the outer automorphism $\sigma \colon e_{ij} \mapsto e_{\inv{-i} \inv{-j}}, e_{\inv{\,i\,} \inv{\,j\,}} \mapsto e_{-i, -j}$ of $\ofalin(2n, K)$ stabilizes $\ofasymp(2n, K)$ (acting as the matrix $\sum_{i < 0} e_{ii} - \sum_{i > 0} e_{ii}$ from $\gsymp(2n, K)$ on this odd form subalgebra) and centralizes $\ofaorth(2n, K)$.

\begin{theorem}\label{gunit-aut} 
Let $G$ be either $\gsymp(2n, -)$ or $\gsorth(2n, -)$ over a commutative ring $K$. Let $(R, \Delta, D) \subseteq (T, \Xi, X)$ be the corresponding classical pair of sheaves of augmented odd form algebras. Then the map $\Aut(T, \Xi; R, \Delta) \to \Aut(G)$ is an isomorphism with the exception $G = \gsorth(2, -)$.
\end{theorem}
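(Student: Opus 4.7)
The strategy mirrors the proof of Theorem \ref{ofa-aut}. The conjugation map $G \to \Aut(T, \Xi; R, \Delta)$ is well-defined since every $g \in G = \gunit(T, \Xi; R, \Delta)$ normalizes $(R, \Delta)$ by definition, and its kernel is $\mathbf C(G)$; composing with $\Aut(T, \Xi; R, \Delta) \to \Aut(G)$ gives the tautological inclusion $G(K)/\mathbf C(G)(K) \hookrightarrow \Aut(G)$, so the inner part of $\Aut(G)$ is automatically covered.

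Injectivity of $\Aut(T, \Xi; R, \Delta) \to \Aut(G)$ amounts to showing that $G$ generates the sheaf $(-) \otimes_K (T, \Xi)$ as a sheaf of odd form algebras. For the subalgebra $R \subseteq T$ this is supplied by the unitary subgroup $\unit(R, \Delta) \subseteq G$ via Lemma \ref{density} (applicable since $n \geq 1$ and the offending $\sorth(2, -)$ case is already excluded). The remaining generators of $T$ lying outside the image of $R$ are recovered using similitudes in $G$ with nontrivial similitude ratio $\lambda$, whose $\beta$-components span the ``anti-diagonal'' complement $\sigma(R) \subseteq T$ in the naive description of the classical pair; concretely, one plugs these similitudes into the scheme-theoretic generation argument in the style of Lemma \ref{density-torus}.

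For surjectivity, I would identify the outer automorphisms of $G$ and lift them. Because $G$ carries the canonical similitude character $\lambda \colon G \to \glin(1, -)$, every automorphism of $G$ descends to an automorphism of the derived subgroup $G_0 = \symp(2n, -)$ resp.\ $\sorth(2n, -)$ that preserves the standard $2n$-dimensional representation. By the classification in \cite[Thm.~7.1.9]{Conrad} and \cite[Exp.~XXIV]{SGA3} this gives $\mathrm{Out}(\gsymp(2n, -)) = 1$ (the $\mathsf C_n$ diagram has no symmetry) and $\mathrm{Out}(\gsorth(2n, -)) \cong \mathbb Z/2\mathbb Z$ for all $n \geq 2$, generated by the unique Dynkin involution of $\mathsf D_n$ that fixes the vector representation; in particular the triality symmetries of $\mathsf D_4$ do not appear, so $\gsorth(8, -)$ is not exceptional. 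The outer generator for $\gsorth$ is realized by conjugation by any element of $\orth(2n, K) = \unit(\ofaorth(2n, K))$ of Dickson invariant $1$, which lies in $\unit(T, \Xi)$ and stabilizes $(R, \Delta)$, hence defines the required class in $\Aut(T, \Xi; R, \Delta)$.

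The main obstacle will be the density step: extending Lemma \ref{density} to produce the outer generators of $T$ from the image of $G$ requires a careful analysis of similitudes of arbitrary ratio $\lambda$ together with the elementary transvections and dilations of $G_0$, possibly via an auxiliary fppf-covering argument in the spirit of Lemma \ref{density-torus}. The excluded case $\gsorth(2, -)$ stems from the same defect as the $\sorth(2, -)$ exception in Theorem \ref{ofa-aut}: here the semisimple factor is a rank-one torus, Lemma \ref{density} is inapplicable, and the outer automorphism structure of $G$ is genuinely larger than $\mathbb Z/2\mathbb Z$.
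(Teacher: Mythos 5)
Your overall skeleton (kernel of $G \to \Aut(T, \Xi; R, \Delta)$, injectivity via a density argument on $R$ and on the complement of $R$ in $T$, surjectivity via lifting outer automorphisms) matches the paper's, but there is a genuine error at the decisive step: the computation of the outer automorphism groups. You apply the Dynkin-diagram count to the derived group and conclude $\mathrm{Out}(\gsymp(2n, -)) = 1$ and $\mathrm{Out}(\gsorth(2n, -)) \cong \mathbb Z/2\mathbb Z$. This is only valid for semisimple groups; $\gsymp(2n,-)$ and $\gsorth(2n,-)$ are reductive with a one-dimensional radical, and the relevant group is $\Aut(X^*, \Phi, X_*, \Phi^\vee; \Delta)$ of the full root datum, which (exactly as for $\glin(n,-)$ in lemma \ref{gl-out}) contains an extra involution inverting the similitude character, $g \mapsto \lambda(g)^{-1} g$ up to inner automorphisms. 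The paper's values are $\mathrm{Out}(\gsymp(2n,-)) \cong \mathbb Z/2\mathbb Z$ and $\mathrm{Out}(\gsorth(2n,-)) \cong \mathbb Z/2\mathbb Z \times \mathbb Z/2\mathbb Z$ for $n > 0$; the extra generator is realized precisely by the outer automorphism $\sigma$ of $\ofalin(2n, K)$, which stabilizes $\ofasymp(2n, K)$ (acting on it as the matrix $\sum_{i<0} e_{ii} - \sum_{i>0} e_{ii}$) and centralizes $\ofaorth(2n, K)$. Capturing this automorphism is the entire point of stating the theorem for the pair $(R, \Delta) \subseteq (T, \Xi)$ rather than for $\punit(R, \Delta)$.

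Note moreover that your count is internally inconsistent with the statement you are proving: $\sigma$ lies in $\Aut(T, \Xi; R, \Delta)$ and is not conjugation by an element of $G$ (it swaps the two factors of $\Cent(T) \cong K \times K$, which no inner automorphism does), so if $\mathrm{Out}(\gsymp(2n,-))$ were trivial, the map $\Aut(T, \Xi; R, \Delta) \to \Aut(G)$ could not be injective. So the surjectivity half of your argument is missing its main case, and the claim that the $\mathsf C_n$ diagram settles the symplectic case must be discarded. (Your conclusion that $\gsorth(8,-)$ is not exceptional is correct, but the justification via ``preserving the vector representation'' needs the same care as the $\sorth(8,-)$ case of theorem \ref{ofa-aut}: triality does not lift to $\sorth(8,-)$ at all, since it fails to preserve the kernel $\mu_2$ of the vector representation of $\spin(8,-)$.)
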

\begin{proof}
If $n = 0$, then all groups from the statement are trivial. Else consider the sequence $G \to \Aut(T, \Xi; R, \Delta) \to \Aut(G)$. The kernel of the first map is $\glin(1, -)$, it coincides with the kernel of the composition. The second map is injective since if $\tau$ lies in the kernel, then it centralizes both $R$ and $\Cent(T)$ by lemma \ref{density}, i.e. it is trivial. It suffices to show that all outer automorphisms of $G$ lie in the image of the second map. Similarly to $\glin(n, -)$, it turns out that $\mathrm{Out}(\gsymp(2n, -)) \cong \mathbb Z / 2 \mathbb Z$ and $\mathrm{Out}(\gorth(2n, -)) \cong \mathbb Z / 2 \mathbb Z \times \mathbb Z / 2 \mathbb Z$ for $n > 0$. In the symplectic case the outer automorphism group is generated by $\sigma$ from the above, and in the even orthogonal case the outer automorphism group is generated by $\sigma$ and a reflection from $\orth(2n, -)$ with Dickson invariant $1$.
\end{proof}

Now all twisted forms of $\gsymp(2n, -)$ and $\gsorth(2n, -)$ arise as general unitary groups (or their index $2$ subgroups) of twisted classical pairs of odd form algebras. It seems that the group schemes $\gsorth(2n + 1, -)$ do not arise as subgroups of general unitary groups of $\ofaorth(2n + 1, -) \subseteq (T, \Xi, X)$ for some nice $(T, \Xi, X)$. On the other hand, the decomposition $\gsorth(2n + 1, -) \cong \sorth(2n + 1, -) \times \glin(1, -)$ is characteristic, hence any twisted form of $\gsorth(2n + 1, -)$ is uniquely decomposed into a product of a twisted form of $\sorth(2n + 1, -)$ and a twisted form of $\glin(1, -)$, both of them are given by augmented odd form algebras.

Let $M$ be the split classical quadratic module over a commutative ring $K$ of rank $n$. Its Clifford algebra $\clif(n, K)$ is the unital associative $K$-algebra generated by $M$ with the relation $m^2 = q(m)$. It is well-known (see, for example, \cite{Knus}) that $\clif(n, K)$ is a $2$-graded algebra, its even part $\clif_0(n, K)$ is generated by $M^2$, and its odd part $\clif_1(n, K)$ contains an isomorphic image of $M$. The Clifford algebra also has an involution $m_1 \cdots m_n \mapsto m_n \cdots m_1$ for $m_i \in M$. The spin group is $\spin(n, K) = \{u \in \clif_0(n, K)^* \mid u^{-1} = \inv u, \up uM = M\}$ (it suffices to check $\up uM \leq M$ instead of $\up uM = M$), it is reductive for $n \geq 2$. There is a short exact sequence $\mu_2 \to \spin(n, K) \to \sorth(n, K)$ for all $n$, where the right arrow is the action on $M$.

Let $(R, \Delta, D) = \ofaorth(n, K)$ be the orthogonal odd form algebra, and $M$ be the split classical quadratic module of rank $n$ as before. We construct a part of the Clifford algebra using $R$. Note that $R = M \otimes M^\oppose \cong M \otimes M$ using the isomorphism $M \cong M^\oppose, m \mapsto m^\oppose$ of $K$-modules. It is easy to see that the action of $\unit(R, \Delta)$ on $R$ is given by $\up g{(m \otimes m')} = \up g m \otimes \up g {m'}$. It follows that the natural action of the symmetric group $\mathrm S_{2k}$ on $R \otimes \ldots \otimes R \cong (M \otimes M) \otimes \ldots \otimes (M \otimes M)$ is $\punit(R, \Delta)$-invariant if $n \neq 8$, because then $\unit(R, \Delta) \to \punit(R, \Delta)$ is an epimorphism of fppf sheaves. There is a linear map $\mathrm{htr} \colon \{x \in R \mid x = \inv x\} \to K$ such that $\mathrm{htr}(e_{ij} + e_{-j, -i}) = 0$ for $i \neq j$, $\mathrm{htr}(e_{i, -i}) = 0$ for $i \neq 0$, $\mathrm{htr}(e_{ii} + e_{-i, -i}) = 1$ for $i \neq 0$, and $\mathrm{htr}(e_{00}) = 1$ (if $n$ is odd). This map satisfies $\mathrm{htr}(x + \inv x) = \tr(x)$ for even $n$ and $\mathrm{htr}(x + \inv x) = \tr(\rep(x))$ for odd $n$, it is invariant under the action of $\punit(R, \Delta)$ (since $\{x \in R \mid x = \inv x\} = \{y + \inv y \mid y \in R\}$ for even $n$ and there is an embedding $\ofaorth(n, K) \to \ofaorth(n + 1, K)$ for odd $n$). The even part of the Clifford algebra $\clif_0(n, K)$ is naturally isomorphic to the unital $K$-algebra $\clif_0(\ofaorth(n, K))$ generated by the $K$-module $R$ with the relations $x = \mathrm{htr}(x)$ and $zw = \mathrm{htr}(x) y$ for $x = \inv x \in R$ and $y \in R$, where $z \otimes w = \sigma(x \otimes y)$ and $\sigma$ is the permutation $2\,3\,1\,4$.

In order to define the spin group in terms of $(R, \Delta, D)$ we need more than just $\clif_0(\ofaorth(n, K)) \cong \clif_0(n, K)$. The odd part $\clif_1(n, K)$ cannot be constructed in the same way, since the group scheme $\punit(\ofaorth(n, K))$ does not act on this module. Instead we define $\clif_1^2(\ofaorth(n, K))$ to be a $\clif_0(\ofaorth(n, K))^{\otimes 2}$-bimodule generated by a $K$-module $R$ with the relations $(x \otimes 1) y = z_1 (w_1 \otimes 1)$ and $(1 \otimes x) y = z_2 (1 \otimes w_2)$ for $x, y \in R$, where $z_1 \otimes w_1 = \sigma_1(x \otimes y)$, $z_2 \otimes w_2 = \sigma_2(x \otimes y)$, $\sigma_1$ is the permutation $1\, 3\, 4\, 2$, and $\sigma_2$ is the permutation $2\, 3\, 1\, 4$. This bimodule is isomorphic to the bimodule $\clif_1(n, K) \otimes \clif_1(n, K)$. Hence the spin group is
$$\spin(n, K) = \{g \in \clif_0(R, \Delta) \mid g^{-1} = \inv g, (g \otimes 1) R (g^{-1} \otimes 1) = R\}.$$

\begin{theorem}\label{spin-aut}
Let $K$ be a commutative ring, $n, m \geq 0$ be integers. Then the homomorphism of group $K$-schemes $\punit(\ofaorth(n, (-)^m)) \to \Aut(\spin(n, -)^m)$ is an isomorphism with the exceptions $n \leq 2$ and $m > 1$; $n = 8$ and $m > 0$.
\end{theorem}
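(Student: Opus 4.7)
The plan is to reduce the claim to theorem \ref{ofa-aut} via the central isogeny $\spin(n, -)^m \twoheadrightarrow \psorth(n, -)^m$. First I would check that the homomorphism $\punit(\ofaorth(n, K^m)) \to \Aut(\spin(n, -)^m)$ is well-defined: any automorphism $\tau$ of $(R, \Delta, \mathcal D) = \ofaorth(n, K^m)$ preserves the involution on $R$ by definition of a morphism of odd form algebras, and also preserves the hyper-trace $\mathrm{htr}$ because $\mathrm{htr}$ was established to be $\punit$-invariant (outside $n = 8$) in the discussion preceding the theorem. The defining relations of the presented objects $\clif_0(\ofaorth(n, K^m))$ and $\clif_1^2(\ofaorth(n, K^m))$ are therefore stable under $\tau$, yielding an automorphism of $\spin(n, K^m) \subseteq \clif_0(\ofaorth(n, K^m))^*$ functorial in scalar extensions of $K$.

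The key reduction is the observation that for $n \geq 3$ the group schemes $\spin(n, -)^m$ and $\psorth(n, -)^m$ are semisimple with identical based root data up to central isogeny. By \cite[theorem 7.1.9]{Conrad} and \cite[Exp. XXIV, theorem 1.3]{SGA3}, the quotient by the scheme-theoretic center induces a canonical isomorphism $\Aut(\spin(n, -)^m) \xrightarrow{\sim} \Aut(\psorth(n, -)^m)$. The natural triangle with $\punit(\ofaorth(n, (-)^m))$ at its apex commutes, so the claim for $n \geq 3$ in the non-exceptional range follows from theorem \ref{ofa-aut}. The stated exception $n = 8$, $m > 0$ matches the $\psorth(8, -)^m$ exception of theorem \ref{ofa-aut} (triality), while the exceptions $n \leq 2$, $m > 1$ fall outside the range of the reduction because $\spin(n, -)$ is not semisimple of type B or D for $n \leq 2$; these small cases must be handled directly from the explicit description of $\ofaorth(n, K^m)$ and the known structure of small-rank spin groups ($\mu_2$ for $n \leq 1$, a one-dimensional torus for $n = 2$).

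The main obstacle is the well-definedness in step one, specifically the $\punit$-invariance of $\mathrm{htr}$, which depends on the fppf-epimorphism $\unit \to \punit$ established via lemma \ref{density}; this epimorphism fails exactly at $n = 8$, accounting for the triality exception in the theorem. A secondary point of care is that the exception lists of theorems \ref{ofa-aut} and \ref{spin-aut} do not match verbatim (e.g.\ $\psorth(2, -)^m$ is an ofa-aut exception for all $m \geq 1$, whereas $n = 2$, $m = 1$ is allowed here), so after the reduction one must verify by hand that the descent along the central isogeny really does correct these discrepancies, using the explicit descriptions of $\unit(\ofaorth(n, K))$ and $\spin(n, K)$ for the small values of $n$.
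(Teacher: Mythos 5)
Your proposal follows the paper's proof essentially verbatim: for $n \geq 3$ the paper likewise passes through the identification $\Aut(\spin(n,-)^m) \cong \Aut(\psorth(n,-)^m)$ coming from semisimplicity and invokes theorem \ref{ofa-aut}, while for $n \leq 2$, $m = 1$ it argues directly ($\spin(n,-) \cong \mu_2$ for $n \leq 1$ with trivial automorphism group on both sides, and $\spin(2,-) \cong \glin(1,-)$ with automorphism group $\mu_2$ generated by a reflection of Dickson invariant $1$). Your closing concern about the mismatch of exception lists is well founded and deserves emphasis: $\psorth(4,-)^m$ for $m \geq 2$ is an exception of theorem \ref{ofa-aut} that the central-isogeny reduction cannot repair (since $\Aut(\spin(4,-)^m) \cong \Aut(\slin(2,-)^{2m})$ has outer automorphism group $\mathrm S_{2m}$, strictly larger than the $(\mathbb Z/2\mathbb Z)^m \rtimes \mathrm S_m$ available from $\punit(\ofaorth(4,(-)^m))$), so the check you defer would in fact fail there; neither your sketch nor the paper's own proof addresses this case.
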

\begin{proof}
If $n = 0$ or $n = 1$, then $\spin(n, -) \cong \mu_2$ has trivial automorphism group, as well as $\ofaorth(n, -)$. In the case $n = 2$ the group scheme $\spin(n, -)$ is isomorphic to $\glin(1, -)$ and its automorphism group scheme $\mu_2$ is generated by a reflection from $\orth(2, -)$ with Dickson invariant $1$. Finally, if $n \geq 3$, then $\spin(n, -)^m$ is semi-simple and $\Aut(\spin(n, -)^m) \cong \Aut(\psorth(n, -)^m)$, hence the result follows from theorem \ref{ofa-aut}.
\end{proof}

In the end we briefly discuss the exceptions for our results.

The group schemes $\psorth(8, -)$ and $\spin(8, -)$ have outer automorphism group schemes $\mathrm S_3$ instead of $\mathbb Z / 2 \mathbb Z$, hence $\Aut(\ofaorth(8, -))$ is a non-normal subgroup of index $3$ in their automorphism group schemes. For these group schemes our constructions using augmented odd form algebras give only a part of all possible forms.

The sequence $\psorth(2, -) \to \punit(\ofaorth(2, -)) \to \Aut(\psorth(2, -))$ is split short exact, i.e. $\punit(\ofaorth(2, -)) \cong \glin(1, -) \rtimes \mu_2$, and the natural map $\Aut(\sorth(2, -)) \to \Aut(\psorth(2, -))$ is an isomorphism. The forms $\psorth(2, -)$ and $\sorth(2, -)$ may be classified using the isomorphisms $\psorth(2, -) \cong \sorth(2, -) \cong \glin(1, -)$.

The group schemes $\slin(1, -)$ and $\pglin(1, -)$, as well as their automorphism groups, are trivial, but $\punit(\ofalin(1, -)) \cong \mu_2$. The sequence $\pglin(2, -) \to \punit(\ofalin(2, -)) \to \Aut(\pglin(2, -))$ is isomorphic to $\pglin(2, -) \to \pglin(2, -) \times \mathbb Z / 2 \mathbb Z \to \pglin(2, -)$, also the map $\Aut(\slin(2, -)) \to \Aut(\pglin(2, -))$ is an isomorphism. The forms of $\slin(2, -)^m$ and $\pglin(2, -)^m$ may be classified using the isomorphisms $\symp(2, -) \cong \slin(2, -)$ and $\psymp(2, -) \cong \pglin(2, -)$.

We do not try to describe all twisted forms of $\glin(n, -)^m$, $\gsymp(2n, -)^m$, and $\gorth(2n, -)^m$ for $m \geq 2$ and $n \geq 1$, since their automorphism groups are not affine (they have infinite discrete group schemes of outer automorphisms). We do not consider $\gglin(n, -)$, $n \geq 1$ and $\gsorth(2, -)$ by the same reason. On the other hand, every twisted form of $\pglin(n, -)^m$, $\psymp(2n, -)^m$, or $\psorth(2n, -)^m$ determines a canonical twisted form of $\glin(n, -)^m$, $\gsymp(2n, -)^m$, or $\gorth(2n, -)^m$ using odd form algebras.

\section{Classical isotropic reductive groups}

In this section we usually assume that $K$ is a Noetherian commutative ring with connected spectrum and $G$ is a reductive group scheme over $K$ of adjoint type constructed by a twisted form $(R, \Delta, \mathcal D)$ of $\ofalin(n, K)^m$ for $n \geq 3$, $\ofaorth(2n + 1, K)^m$ for $n \geq 1$, $\ofasymp(n, K)^m$ for $n \geq 1$, or $\ofaorth(2n, K)^m$ for $n \geq 3$. Moreover, we assume that $G$ is nontrivial and indecomposable into direct product.

Let $\mathcal T(G)$ be the family of all split tori in $G$ ordered by inclusion. We denote by $\mathcal H(R, \Delta)$ the family of all orthogonal hyperbolic families $H = (\eta_1, \ldots, \eta_n)$ in $(R, \Delta)$ with the additional property $R = R e_{|i|} R$ considered up to permutations of hyperbolic pairs and up to changing $\eta_i = (e_{-i}, e_i, q_{-i}, q_i)$ to $-\eta_i = (e_i, e_{-i}, q_i, q_{-i})$. The partial order on $\mathcal H(R, \Delta)$ is the following: an orthogonal hyperbolic family $H$ is called larger than $H'$ if $H'$ may be obtained from $H$ by deleting hyperbolic pairs and adding them, i.e. replacing distinct $\eta_i = (e_{-i}, e_i, q_{-i}, q_i)$ and $\eta_j = (e_{-j}, e_j, q_{-j}, q_j)$ by $\eta_i \oplus \eta_j = (e_{-i} + e_{-j}, e_i + e_j, q_{-i} \dotplus q_{-j}, q_i \dotplus q_j)$. For example, if $(R, \Delta, \mathcal D)$ is split, then the standard maximal torus is in $\mathcal T(G)$ and the standard orthogonal hyperbolic family is in $\mathcal H(R, \Delta)$.

\begin{lemma}\label{hyp-pairs}
Let $\eta$ be a hyperbolic pair in one of the augmented odd form $K$-algebras $\ofalin(n, K)$, $\ofaorth(2n + 1, K)$, $\ofasymp(2n, K)$, or $\ofaorth(2n, K)$ for $n \geq 0$. The either $\eta$ is conjugate to one of the standard hyperbolic pairs $\eta_i$ by an element of the projective unitary group or $\eta$ is decomposable fppf locally.
\end{lemma}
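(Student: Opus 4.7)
The strategy is to translate the question into the quadratic module picture via the naive construction of Proposition \ref{naive-canon}, and then argue by the rank $r$ of the projective $K$-submodule cut out by the idempotent $e_+$. Concretely, identify $(R, \Delta)$ with the odd form algebra acting on the split quadratic module $M$ of the relevant classical type; the hyperbolic pair $\eta = (e_-, e_+, q_-, q_+)$ corresponds to a pair of direct summands $P_\pm = e_\pm M$ of equal locally constant rank $r$, where $\inv{e_+} = e_-$ gives a perfect duality $P_- \otimes P_+ \to L$ via $B$ and the conditions $\pi(q_\pm) = e_\pm$, $\rho(q_\pm) = 0$ force the quadratic form to vanish on $P_\pm$. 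Thus $P_+ \oplus P_-$ is an orthogonal direct summand of $M$ isomorphic to $\hypspace(P_+)$. After Zariski localization on $K$ we may assume $r$ is constant.

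If $r \geq 2$, the plan is to split off a free rank-$1$ summand $K v_1 \subseteq P_+$ fppf-locally (in fact Zariski-locally suffices) and use the duality with $P_-$ to produce a compatible splitting $P_- = K v_{-1} \oplus P_-'$ with $B(v_{-1}, v_1)$ equal to the prescribed unit. Translating back through the naive construction, this yields a decomposition $\eta = \eta' \oplus \eta''$ as an orthogonal sum of a rank-$1$ and a rank-$(r-1)$ hyperbolic pair, which is precisely the fppf decomposability in the second alternative of the lemma.

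If $r = 1$, then after an fppf cover trivializing the line bundle $P_+$, I would pick generators $v \in P_+$ and $v^* \in P_-$ with $B(v^*, v)$ equal to the prescribed unit; the span $K v \oplus K v^*$ is then a standard hyperbolic plane in $M$. By a Witt-type extension argument applied to its orthogonal complement (a split classical quadratic module of the same type and rank reduced by $2$), extend the obvious map $v \mapsto e_1$, $v^* \mapsto e_{-1}$ to an isometry of $M$; this isometry is an element of $\unit(R, \Delta)$ whose image in $\punit(R, \Delta)$ conjugates $\eta$ to $\eta_1$. The main obstacle is the Witt-extension step, which must be handled uniformly across the four classical types and, in the orthogonal case with $2$ not invertible, requires the semi-regular refinement via the hdet criterion recalled in Section $2$; the residual subtlety that the conjugating element may only be defined over the chosen fppf cover is absorbed by reading the first alternative of the lemma in the appropriate sheaf-theoretic sense.
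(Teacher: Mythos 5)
Your route through the quadratic-module picture is genuinely different from the paper's. The paper never works with $M$ directly: it shows $e_+Re_+$ is an Azumaya algebra, reduces to $e_+Re_+\cong K$, and then identifies the resulting rank-one torus of dilations with a coroot of the standard maximal torus via fppf-local conjugacy of maximal tori together with lemmas \ref{density} and \ref{density-torus}; the dichotomy of the lemma falls out of whether the coroot has one or several nonzero coordinates. Your Witt-theoretic argument would replace that machinery, and in the linear, symplectic and even orthogonal cases your dictionary is sound: $e_\pm$ cut out a dual pair of totally isotropic summands, $\rho(q_\pm)=0$ forces $q$ to vanish on them, and the $r\geq 2$ and $r=1$ steps work, provided you note that (i) what the ``Witt extension'' really needs is that the orthogonal complement of the split-off hyperbolic summand is again regular (resp.\ semi-regular) of the same type and hence splits after a further fppf cover --- genuine Witt cancellation can fail over general $K$ --- and (ii) $q_\pm$ is uniquely determined by $e_\pm$ (true since $\rho$ is injective on $\mathcal D$ in the split algebras), so conjugating the idempotents conjugates the whole hyperbolic pair. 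The fppf-local nature of your conjugating element is acceptable: the paper's own proof also only conjugates after an fppf cover, and the sole application (proposition \ref{parabolic}) is fppf-local.

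The genuine gap is the case $\ofaorth(2n+1,K)$ with $2\notin K^*$. There the split module $M$ is only semi-regular, proposition \ref{naive-canon} does not apply, and the canonical algebra differs from the naive one: the homomorphism $\rep$ has kernel $\{ke_{00}\mid 2k=0\}$, so $R$ does not act faithfully on $M$, and $\unit(R,\Delta)=\widetilde{\orth}(2n+1,K)$ is not $\orth(2n+1,K)$. Consequently neither your identification of the idempotents $e_\pm$ with direct summands of $M$ nor your claim that a Witt isometry of $M$ yields an element of $\unit(R,\Delta)$ conjugating $\eta$ is justified in this case --- and this is precisely the point on which the paper's proof spends its first paragraph, showing that $\rep$ restricts to a bijection $e_+Re_+\to\rep(e_+)\mat(2n+1,K)\rep(e_+)$ for every idempotent $e_+$, so that the corner is still Azumaya. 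To rescue your argument you would need the analogous statements that idempotents of $R$ and their corners are faithfully reflected in $\mat(2n+1,K)$ and that elements of $\sorth(2n+1,K)$ act on $(R,\Delta)$; as written, the proposal silently fails in one of the four families the lemma covers.
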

\begin{proof}
Let $\eta = (e_-, e_+, q_-, q_+)$ and $(R, \Delta, \mathcal D)$ be the augmented odd form $K$-algebra. We claim that $e_+ R e_+$ is an Azumaya algebra over $K$ (or a product of two Azumaya algebras over $K$ in the linear case). Indeed, this is obvious unless $(R, \Delta, \mathcal D) = \ofaorth(2n + 1, K)$ for some $n \geq 0$. In this case let us show that
$$\rep \colon R \to \mat(2n + 1, K), e_{ij} \mapsto e_{ij} \text{ for } j \neq 0, e_{i0} \mapsto 2e_{i0}$$
maps $e_+ R e_+$ bijectively to $\rep(e_+) R \rep(e_+)$. Indeed, $R$ is a left $\mat(2n + 1, K)$-module under the multiplication $e_{ij} \cdot e_{jk} = e_{ik}$ and $e_{ij} \cdot e_{kl} = 0$ for $j \neq k$. Moreover, $\rep(a \cdot x) = a \rep(x)$, $\rep(x) y = xy$ and $a \cdot xy = (a \cdot x)y$ for all $a \in \mat(2n + 1, K)$ and $x, y \in R$. Hence if $e \in R$ is an idempotent, then $\rep \colon eRe \to \rep(e) \mat(2n + 1, K) \rep(e)$ is a bijection with the inverse map $a \mapsto a \cdot e$.

From now on we may assume that $\eta$ is indecomposable fppf locally, i.e. $e_+ R e_+ \cong K$ as a $K$-algebra. The corresponding dilations gives an embedding from the split torus $T$ of rank $1$ to $\unit((-) \otimes_K (R, \Delta, \mathcal D))$. Since every torus in a reductive group scheme lies in a maximal one and all maximal tori are conjugate fppf locally by \cite[remark 3.1.5, theorem 3.2.6]{Conrad} or \cite[Exp. XIX, theorem 2.5, 2.8]{SGA3}, we may assume that $T$ is given by a nonzero coroot $(k_1, \ldots, k_n) \in \mathbb Z^n$ of the standard maximal torus. By lemma \ref{density}, $T$ generates a subsheaf of odd form algebras isomorphic to $\ofalin(1, -)$. On the other hand, by lemma \ref{density-torus} such subsheaf is strictly larger unless all nonzero $k_i$ are equal. But if there are $m > 1$ nonzero $k_i$, then $\eta$ obviously decomposes into $m$ smaller hyperbolic pairs. Hence $\eta$ is either a standard hyperbolic pair, or it is opposite to a standard one.
\end{proof}

\begin{prop}\label{parabolic}
Let $(R, \Delta, \mathcal D)$ be a classical odd form $K$-algebra with an orthogonal hyperbolic family $H = (\eta_1, \ldots, \eta_n)$. Then the group subpresheaf $P \leq \unit((-) \otimes_K (R, \Delta, \mathcal D))$ generated by $D_i$ for $0 \leq i \leq n$, $T_{ij}$ for $i < j$, $T_j$ for $0 < j$ is a parabolic group subscheme of its fiberwise connected component.
\end{prop}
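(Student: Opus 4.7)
The plan is to reduce to the split case by faithfully flat descent and then identify $P$ with a standard parabolic. Since the property of being a closed parabolic subgroup scheme of a reductive group scheme is fpqc-local on the base (see \cite{Conrad}, or Exp.~XXVI of \cite{SGA3}), I may choose an fppf cover $E/K$ splitting the classical odd form $K$-algebra $(R, \Delta, \mathcal D)$ and the orthogonal hyperbolic family $H$ simultaneously. After this base change $(R, \Delta, \mathcal D)_E$ becomes one of $\ofalin(n, E^m)$, $\ofasymp(2n, E^m)$, $\ofaorth(n', E^m)$, and $H$ identifies, up to permutation of pairs and the involution $\eta_i \mapsto -\eta_i$, with the standard orthogonal hyperbolic family. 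Reordering if necessary, we may assume the total order $i < j$ agrees with the standard order on the index set $\{-n, \ldots, -1, 1, \ldots, n\}$ (with $0$ included in the odd orthogonal case).

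In this split situation the generators of $P$ are precisely the classical root subgroups attached to the Borel determined by the ordered flag of totally isotropic submodules $V_k = \langle e_1, \ldots, e_k \rangle_E$ for $k = 1, \ldots, n$. Indeed, the dilations $D_i$, $1 \leq i \leq n$, generate the Levi torus $\prod_i \glin_1(e_i R e_i)$, with $D_0$ contributing the extra short factor $\sorth(1,-)$-piece in the odd-orthogonal case, while the short-root transvections $T_{ij}$ for $i < j$ and the ultrashort-root transvections $T_j$ for $j > 0$ exhaust the positive root subgroups relative to this Borel. Consequently the presheaf $P$ coincides with the scheme-theoretic stabilizer of the flag $V_1 \subset V_2 \subset \cdots \subset V_n$ in the fiberwise connected component $G^\circ$ of the split unitary group scheme; this stabilizer is a well-known standard parabolic subscheme in each of the classical types $\mathsf A$, $\mathsf B$, $\mathsf C$, $\mathsf D$.

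The key technical step is equipping $P$ with its scheme structure. Picking an ordering of the positive root system compatible with a height function, I would prove that the multiplication morphism from the product of the root subschemes and the Levi factor, taken in that order, is an isomorphism of affine $K$-schemes onto $P$. This amounts to writing down the Chevalley-type commutator formulas satisfied by the $T_{ij}$, $T_j$, and $D_i$ in the notation of odd form algebras (using their explicit action on $(R, \Delta)$) and noting that each individual generating subscheme is smooth and affine: $D_i$ gives $\glin_1(e_i R e_i)$, $T_{ij}$ gives the additive group of $e_i R e_j$, and $T_j$ gives the additive group of $\Delta^0 \cdot e_j$. This exhibits $P$ as a smooth closed subgroup scheme of $G^\circ$, and parabolicity follows from the fibral criterion, since on every geometric point $P$ becomes the classical parabolic stabilizing an isotropic flag of the prescribed lengths.

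The main obstacle is the careful handling of the ultrashort-root generators $T_j$ in the symplectic and odd-orthogonal cases, where $T_j(u)$ depends on $u \in \Delta^0 \cdot e_j$ rather than on $R$ alone and the commutator of a short-root and an ultrashort-root transvection produces both a short and an ultrashort term. Once these Chevalley commutator relations are verified intrinsically at the level of $(R, \Delta)$, the semidirect decomposition of $P$ into its Levi and its unipotent radical is immediate, and the small-rank degeneracies (where some generating set collapses) are dealt with by direct inspection.
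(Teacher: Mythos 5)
Your overall strategy (fppf descent to the split case, then identification of $P$ with a standard parabolic) is the same as the paper's, but there is a genuine gap at the reduction step. You assert that after an fppf base change the given orthogonal hyperbolic family $H$ ``identifies, up to permutation of pairs and the involution $\eta_i \mapsto -\eta_i$, with the standard orthogonal hyperbolic family.'' This is false as stated, and it is exactly where the real work lies: an orthogonal hyperbolic family in the sense of this paper need not be full (the idempotent $e_0 = 1 - \sum_i e_{|i|}$ may be nonzero, and some $e_{|i|}$ may even vanish) and its pairs need not be indecomposable (each $e_i R e_i$ may be an Azumaya algebra of large rank), so splitting $(R, \Delta, \mathcal D)$ does not by itself put $H$ in standard position. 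The paper first refines $H$ to a larger family $H'$ whose pairs are fppf-locally indecomposable, and then invokes lemma \ref{hyp-pairs} --- whose proof is a nontrivial argument via rank-one split tori and lemma \ref{density-torus} --- to conjugate $H'$ to the standard family by an element of the projective unitary group. Only after that conjugation does $P$ become standard, and it is then generated by the maximal torus together with the root subgroups for a \emph{parabolic subset} of roots determined by the coarsening $H \leq H'$; in general it is a proper parabolic, not the Borel.

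Relatedly, your description of the generators is wrong in this generality: $D_i$ yields $\glin_1(e_i R e_i)$ with $e_i R e_i$ a possibly nontrivial Azumaya algebra, and $D_0$ ranges over the whole unitary group $\unit(e_0 R e_0, \Delta^0 \cdot e_0)$, i.e.\ the anisotropic part of the Levi factor, not merely an extra rank-one piece in the odd orthogonal case; consequently $P$ is not the stabilizer of a full isotropic flag. The steps you postpone (the commutator relations showing $P$ is already an fppf sheaf, and the resulting product decomposition) are essentially what the paper disposes of by citing the commutativity relations of \cite[lemma 3]{OFADefVor}, so that part of your plan is acceptable, but the conjugation argument for hyperbolic families is an essential missing ingredient rather than a routine normalization.
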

\begin{proof}
It is easy to see using the commutativity relations \cite[lemma 3]{OFADefVor}, that $P$ is a group sheaf in the fppf topology. Working fppf locally, we may assume that all the idempotents of $H$ are nonzero and there is an orthogonal hyperbolic family $H' \geq H$ such that all hyperbolic pairs from $H'$ are indecomposable fppf locally. By lemma \ref{hyp-pairs}, we may further assume that $(R, \Delta, \mathcal D)$ splits and $H'$ is the standard orthogonal hyperbolic family. In this case $P$ as a group subsheaf is generated by the maximal torus and root subgroups with the roots from a parabolic subset of the root system, i.e. $P$ is parabolic.
\end{proof}

Take an orthogonal hyperbolic family $H \in \mathcal H(R, \Delta)$ of rank $n$. For every hyperbolic pair $\eta_i \in H$ there is an embedding $K^* \to \unit(R, \Delta), k \mapsto D_i(k)$. This embedding is functorial on $K$, hence we get a homorphism $\mathbb G_{\mathrm m}^n \to G$. The image of this homomorphism is a split torus of rank $n$ or $n - 1$, the second case may only occur if $H$ contains a hyperbolic pair $\eta_i = (e_{-i}, e_i, q_{-i}, q_i)$ such that $R e_i R \neq R$. In this case $(R, \Delta, \mathcal D)$ is of linear type, the ring $R e_i R$ is an Azumaya algebra over its center, and $G$ is its automorphism group scheme over the center. Hence we have a monotone map $\mathcal H(R, \Delta) \to \mathcal T(G)$.

\begin{lemma}\label{hyp-descent}
There is a monotone map $\mathcal T(G) \to \mathcal H(R, \Delta)$ such that if $T \in \mathcal T(G)$ has rank $n$, then its image $H$ is an orthogonal hyperbolic family of rank at least $n$. If $H$ contains a hyperbolic pair $\eta = (e_-, e_+, q_-, q_+)$ such that $R \neq R e_- R$, then the rank of $H$ is at least $n + 1$.
\end{lemma}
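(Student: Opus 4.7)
The plan is to construct $T \mapsto H_T$ fppf-locally and use descent for augmented odd form $K$-algebras (proposition~\ref{2-mod-descent} and the subsequent discussion), together with the observation that $\mathcal H(R, \Delta)$ is a sheaf in the fppf topology on $\Spec K$. First I would work over an fppf cover that trivializes $(R, \Delta, \mathcal D)$ to one of the split classical odd form algebras and, using the conjugacy of split maximal tori in reductive group schemes as in the proof of lemma~\ref{hyp-pairs}, simultaneously conjugates $T$ into the image in $G$ of the standard maximal torus $T_{\max} \subseteq \unit(R, \Delta)$.

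Once $T$ sits inside $T_{\max}$, the conjugation action of $T_{\max}$ places $K e_{ij}$ in the weight $\chi_i - \chi_j$, where $\chi_i$ is the standard character attached to $\eta_i$. Let $\Lambda \subseteq X^*(T_{\max})$ be the sublattice of characters vanishing on a chosen lift of $T$, and define an equivalence relation on the index set of the standard hyperbolic family by $i \sim j \iff \chi_i - \chi_j \in \Lambda$. Let $H_T$ be obtained by merging the standard pairs $\eta_i$ along $\sim$-classes; in the symplectic and orthogonal cases the relation $i \sim -i$ is ruled out for $i \neq 0$ since $2\chi_i$ is then a root, and in the odd orthogonal case the class containing the index $0$ is absorbed into the ultra-short component and discarded from the hyperbolic family.

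To see that $H_T$ descends, note that any two conjugations of $T$ into $T_{\max}$ differ by an element of $N_G(T_{\max})$, whose action on $T_{\max}$ factors through the Weyl group $W$. Direct inspection for each of the four classical root systems shows that $W$ acts on the standard orthogonal hyperbolic family by permutations of pairs together with the swaps $\eta_i \leftrightarrow -\eta_i$, which are precisely the identifications built into the definition of $\mathcal H(R, \Delta)$. Hence $H_T$ is canonical. Monotonicity is immediate: $T \subseteq T'$ forces $\Lambda_{T'} \subseteq \Lambda_T$, so $\sim_{T'}$ refines $\sim_T$ and $H_{T'}$ is obtained from $H_T$ by splitting merged pairs, i.e.\ $H_T \leq H_{T'}$.

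For the rank estimate, the number of classes (with the $\pm$-identification in the non-linear cases) equals the rank of $T$ plus the rank of the kernel of the isogeny $T_{\max} \to G$; this kernel is finite in the symplectic and orthogonal cases and is a rank-one central torus in the linear case, giving rank of $H_T$ equal to rank of $T$ or rank of $T+1$ respectively. The additional hypothesis $R e_- R \neq R$ for some pair in $H_T$ forces a linear-type simple factor of $R$ (where $R$ splits as two matrix blocks swapped by the involution), so the $+1$ surplus applies there. The main obstacle I anticipate is verifying Weyl-group equivariance uniformly across the four classical types and tracking the ultra-short $0$-index in the odd orthogonal case; the rest is routine root-datum bookkeeping.
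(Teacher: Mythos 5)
Your core construction --- conjugating $T$ into the standard maximal torus and merging the standard hyperbolic pairs along the characters $\chi_i \pm \chi_j$ that vanish on a lift of $T$ --- is the same as the paper's, but your descent argument has a genuine gap. The gluing ambiguity between two local trivializations that both put $T$ inside $T_{\max}$ is \emph{not} controlled by $N_G(T_{\max})$: it is the full centralizer of $T$ in $\Aut(G)\cong\punit(R,\Delta)$, which is an extension of a finite subgroup $F$ of $\mathrm{Out}(G)$ by the reductive group $\mathbf C_G(T)$. The latter contains the unipotent root subgroups $U_\alpha$ for all roots $\alpha$ vanishing on $T$, and these do not normalize $T_{\max}$ (already for a rank-one subtorus of the diagonal torus in $\glin(3,-)$ the centralizer contains a $\glin(2,-)$-block). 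Your merged family happens to be invariant under these $U_\alpha$ --- that is exactly why one merges along vanishing roots --- but this must be checked and is the real content of the invariance claim; the Weyl group is not the right group to quotient by. Worse, you ignore the outer part $F$ entirely: it can permute the indecomposable factors of $(R,\Delta,\mathcal D)$ and, in the linear case, act by the contragredient involution composed with a permutation, which forces the extra merging $\eta\oplus-\eta'$ for $f(\eta)=-\eta'$ and the summation over $\widetilde F$-orbits of factors that occupy most of the paper's proof.

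Second, ``fixed up to the identifications built into $\mathcal H(R,\Delta)$'' is not sufficient for descent. An element of $\mathcal H(R,\Delta)$ is an actual tuple of idempotents of $R$ and elements of $\Delta$; if the gluing automorphism permutes the merged pairs nontrivially, or swaps $e_-$ with $e_+$ inside a pair, then the individual elements do not lie in the image of $R\to R\otimes_K E$ and nothing descends --- only the sum over an orbit does. So $\mathcal H$ does not satisfy the gluing axiom you invoke (it is at best a separated presheaf), and the appeal to it being an fppf sheaf is unjustified. The paper circumvents this by arranging each merged pair to be \emph{literally} fixed by every element of the centralizer of $T$ in $\Aut(G)$: pairs with $\chi_i$ vanishing on the lifted torus are deleted rather than ``merged with themselves,'' pairs in a single $\widetilde F$-orbit are summed, and the rank bounds ($n$ in general, $n+1$ in the linear case with trivial stabilizer in $F$) are then read off from the rank of the preimage torus $\widetilde T'\le\unit(R',\Delta')$ in each indecomposable factor. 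Your rank count via the kernel of $T_{\max}\to G$ points in the right direction but has to be redone per factor after the orbit summation.
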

\begin{proof}
Fix $T \in \mathcal T(G)$. We construct an orthogonal hyperbolic family fppf-locally in a canonical way, so it descents to an orthogonal hyperbolic family in $(R, \Delta)$. Since every fppf cover of $\Spec(K)$ has a refinement consisting of connected affine schemes, we may assume that already for our ring $K$ the augmented odd form algebra $(R, \Delta, \mathcal D)$ splits and $T$ is a subgroup of the standard maximal torus. The group scheme $G$ now may be decomposable, but it has no decompositions invariant under all automorphisms centralizing $T$. The scheme centralizer of $T$ in $\mathbf{Aut}(G)$ is an extension of a finite subgroup $F$ of $\mathrm{Out}(G)$ considered as a constant group scheme by the reductive group scheme $\mathbf C_G(T)$, see \cite[remark 3.1.5]{Conrad} or \cite[Exp. XIX, 2.8]{SGA3}. The maximal torus of $G$ is also a maximal torus in $\mathbf C_G(T)$. Since all maximal tori of $\mathbf C_G(T)$ are conjugate fppf locally by \cite[theorem 3.2.6]{Conrad} or \cite[Exp. XIX, theorem 2.5]{SGA3}, we may assume that $F$ has a set of representatives $\widetilde F \subseteq \Aut(G)$ normalizing the maximal torus. It acts transitively on the indecomposable factors of $(R, \Delta, \mathcal D)$, otherwise $G$ would be decomposable.

Take an indecomposable factor $(R', \Delta', \mathcal D')$ of $(R, \Delta, \mathcal D)$. The projection $T'$ of $T$ to $\punit(R', \Delta')$ has rank $n$, otherwise $(R, \Delta, \mathcal D)$ has an invariant decomposition into a direct product. Let also $\widetilde T' \leq \unit(R', \Delta')$ be the preimage of $T'$, its rank is $n + 1$ in the linear case and $n$ otherwise.

Let $H$ be the standard orthogonal hyperbolic family in $(R', \Delta')$ and $\mathrm e_1, \ldots, \mathrm e_N$ be the basic weights of the maximal torus of $\unit((-) \otimes_K (R', \Delta', \mathcal D'))$ (more precisely, of its largest reductive subgroup). Suppose at the moment that $(R, \Delta, \mathcal D)$ is not of the linear type. Now delete from $H$ the hyperbolic pairs $\eta_i$ such that $\mathrm e_i$ vanishes on $\widetilde{T'}$. Consider the following equivalence relation on the remaining hyperbolic pairs from $H$ and their opposites: each such hyperbolic pair is equivalent to itself and if $\mathrm e_i \pm \mathrm e_j$ vanishes on $\widetilde{T'}$, then $\eta_i \sim \mp \eta_j$ and $-\eta_i \sim \pm \eta_j$. We construct an orthogonal hyperbolic family $H'$ by considering sums of hyperbolic pairs from each equivalence class. Such $H'$ is not yet an orthogonal hyperbolic family, because it is closed under taking the opposite hyperbolic pairs. So we take only one hyperbolic pair from every two opposite hyperbolic pairs.

If $(R, \Delta, \mathcal D)$ is of linear type, then we use a slightly different construction of $H'$. Consider the equivalence relation on $H$, where $\eta_i \sim \eta_j$ if and only if $\mathrm e_i - \mathrm e_j$ vanishes on $\widetilde{T'}$. The sums of equivalent hyperbolic pairs form an orthogonal hyperbolic family $H''$. If the stabilizer of $(R', \Delta', \mathcal D')$ in $F$ is trivial, then we take $H' = H''$. Else hyperbolic pairs of $H'$ are the sums $\eta \oplus -\eta'$ for distinct hyperbolic pairs $\eta, \eta' \in H''$ such that $f(\eta) = -\eta'$. As in the non-linear case, we modify such $H'$ by taking only one hyperbolic pair from every two opposite hyperbolic pairs.

We claim that $H'$ is invariant under the action of $\Cent_G(T)$. It is easy to see that the standard maximal torus of $G$ is a maximal torus of $\mathbf{C}_G(T)$, and the roots of $\mathbf{C}_G(T)$ are the roots of $G$ vanishing on $T$. By construction, the maximal torus and the root subgroups of $\mathbf{C}_G(T)$ centralize $H'$. Hence by fppf descent all elements of the abstract group $\Cent_G(T) = \mathbf{C}_G(T)(K)$ centralize $H'$.

Suppose that there is $f \in \widetilde F$ inducing the non-trivial automorphism of $(R', \Delta', \mathcal D')$. Since $f$ normalizes the maximal torus, it permutes the hyperbolic pairs from $H$ and their opposites. In the non-linear case if $f(\eta_i) = \eta_{-i}$ for some $i$, then $f(\mathrm e_i) = -\mathrm e_i$ and $\mathrm e_i$ vanishes on $\widetilde{T'}$. Similarly, if $f(\eta_i) = \pm \eta_j$ for $i \neq j$, then $f(\mathrm e_i) = \pm \mathrm e_j$ and $\mathrm e_i \mp \mathrm e_j$ vanishes on $\widetilde{T'}$. Hence $f$ trivially acts on $H'$. In the linear case $f$ also trivially acts on $H'$ directly by construction.

Now we have an orthogonal hyperbolic family $H'$ for each indecomposable component $(R', \Delta', \mathcal D')$. Since $\widetilde F$ acts transitively on the components, it induce unique bijections between these orthogonal hyperbolic families. Taking sums of hyperbolic pairs in such families from each orbit under the action of $\widetilde F$, we get an invarint orthogonal hyperbolic family, as required.

If $(R, \Delta, \mathcal D)$ is not of linear type, then $\widetilde{T'}$ has rank $n$ and it is easy to see that the rank of $H'$ is at least $n$. In the linear case $\widetilde{T'}$ has rank $n + 1$, so if the stabilizer of $(R', \Delta', \mathcal D')$ in $F$ is trivial, then the rank of $H'$ is at least $n + 1$. In the remaining case the stabilizer is generated by $f \in \widetilde F$ with trivial action on $T'$ and the non-trivial action on the one-dimensional split torus $\Ker(\widetilde T' \to T')$. It follows that the rank of $H'$ is at least $n$.
\end{proof}

Let us say that $(R, \Delta, \mathcal D)$ is a locally classical odd form $K$-algebra, if there is a decomposition $K = \prod_i K_i$ into a finite direct product of rings such that $(R, \Delta, \mathcal D)$ is a finite direct product of classical odd form algebras over $K_i$. For example, $\ofalin(1, K) \times \ofasymp(4, K)$ is a locally classical odd form $K$-algebra. Recall from \cite{Stavrova}, that the isotropic rank of a reductive group scheme $G$ over any commutative ring $K$ is at least $n$ if every semisimple normal subgroup scheme of $G$ contains a split torus of rank $n$.

\begin{theorem}\label{isotropic-classical}
Let $K$ be a commutative ring, $n \geq 3$, and $G$ be a semisimple group scheme over $K$ of adjoint type. Then $G$ has isotropic rank at least $n$ if and only if it is the maximal reductive subgroup of the projective unitary group scheme associated with some locally classical odd form $K$-alegbra $(R, \Delta, \mathcal D)$ and this odd form $K$-algebra contains an orthogonal hyperbolic family $\eta_1, \eta_2, \ldots$ of rank $n + 1$ with the additional property $R = R e_{|j|} R$ or of rank $n$ with the stronger property $R = R e_j R$.
\end{theorem}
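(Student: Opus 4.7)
The plan is to prove both directions using the monotone Galois-type correspondence between $\mathcal{H}(R,\Delta)$ and $\mathcal{T}(G)$ that is implicit in lemma~\ref{hyp-descent} and in the paragraph preceding it. First I would reduce to a connected situation: using the ``locally classical'' clause, a decomposition $K=\prod_i K_i$ is allowed, so I may pass to a connected component of $\Spec K$, and then further decompose $G$ into its minimal nontrivial direct factors, so that the setup from the start of the section applies. Isotropic rank and the existence of an orthogonal hyperbolic family of a given rank are both compatible with this decomposition, and both only need to be checked on each indecomposable factor of $G$.

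For the ``if'' direction, suppose $(R,\Delta,\mathcal D)$ is a locally classical odd form $K$-algebra and $H=(\eta_1,\ldots,\eta_r)$ is an orthogonal hyperbolic family of the prescribed type. By the construction of the map $\mathcal H(R,\Delta)\to\mathcal T(G)$ described just before lemma~\ref{hyp-descent}, the elementary dilations $D_i(k)$ furnish a homomorphism $\mathbb G_{\mathrm m}^r\to\unit((-)\otimes_K(R,\Delta,\mathcal D))$; composing with the projection to $G$ yields a split torus whose rank is $r$ in the symplectic/orthogonal cases and at least $r-1$ in the linear case, with the deficit occurring only when some $\eta_i$ has $Re_iR\neq R$. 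Under hypothesis (a), $r=n+1$ and $R=Re_{|j|}R$, so the deficit is at most one and the resulting split torus has rank at least $n$; under hypothesis (b), $r=n$ and $R=Re_jR$ already forces $Re_iR=R$ for each $i$, so we again obtain a split torus of rank $n$. Since this argument applies to every indecomposable factor of $G$ separately, every semisimple normal subgroup of $G$ contains a split torus of rank $n$, i.e.\ $G$ has isotropic rank at least $n$.

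For the ``only if'' direction, assume $G$ has isotropic rank at least $n$. By theorem~\ref{ofa-aut} together with the paragraph at the end of section~6, $G$ is obtained as the maximal reductive subgroup of the projective unitary group scheme of some twisted form $(R,\Delta,\mathcal D)$ of one of the algebras $\ofalin(n',K)^m$, $\ofasymp(2n',K)^m$, or $\ofaorth(n',K)^m$; after the connected/indecomposable reductions above the factor structure of $K$ gives the ``locally classical'' clause. Now apply lemma~\ref{hyp-descent} to a split torus $T\subseteq G$ of rank $n$: it produces an orthogonal hyperbolic family $H\in\mathcal H(R,\Delta)$ whose rank is at least $n$, and at least $n+1$ whenever $H$ contains a pair with $R\neq Re_-R$. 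Two cases then arise: either every pair of $H$ satisfies $R=Re_-R$ (whence also $R=Re_jR$, so hypothesis (b) holds with rank exactly $n$), or some pair violates this, in which case we pick up an extra pair and obtain a family of rank $n+1$ satisfying $R=Re_{|j|}R$, which is hypothesis (a). In the non-linear cases the involution identifies $Re_-R$ with $Re_+R$, so the stronger condition automatically holds and we are in case (b).

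The main obstacle is the bookkeeping needed to track when the map $\mathcal H(R,\Delta)\to\mathcal T(G)$ loses one unit of rank, namely exactly in the linear type when $Re_iR\neq R$. This is the reason for the bifurcation in the statement between rank $n+1$ with the weaker condition $R=Re_{|j|}R$ and rank $n$ with the stronger condition $R=Re_jR$. Making the correspondence precise requires carefully following the proof of lemma~\ref{hyp-descent} in the linear versus non-linear cases and in the presence of outer automorphisms permuting the two factors of the center of $R$; once that case analysis is in place, both directions follow formally from the monotonicity of $\mathcal H(R,\Delta)\leftrightarrows\mathcal T(G)$ and from the rank bound in lemma~\ref{hyp-descent}.
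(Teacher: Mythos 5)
Your overall strategy coincides with the paper's: reduce to indecomposable $G$ over a connected base and run the two monotone maps $\mathcal H(R,\Delta)\rightleftarrows\mathcal T(G)$ from lemma \ref{hyp-descent} and the construction preceding it, keeping track of the one unit of rank that can be lost in the linear case. The ``if'' direction and the rank bookkeeping in the ``only if'' direction are fine as you present them.

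There is, however, one genuine gap in the ``only if'' direction. You invoke theorem \ref{ofa-aut} to conclude that $G$ arises from a twisted form of a classical odd form algebra, but $\psorth(8,-)^m$ is explicitly one of the exceptions of that theorem: because of triality, $\mathrm{Out}(\psorth(8,-))\cong\mathrm S_3$ while $\Aut(\ofaorth(8,-))$ sits inside $\Aut(\psorth(8,-))$ as a non-normal subgroup of index $3$, so a general twisted form of $\psorth(8,-)^m$ need \emph{not} come from any twisted form of $\ofaorth(8,-)^m$. Your argument silently assumes it does. The paper closes this gap with a separate paragraph: given a split torus $T$ of rank at least $3$ in such a $G$, no lift of the triality to $\Aut(\psorth(8,-))$ can centralize the projection of $T$ (the cyclic group $\mathbb Z/3\mathbb Z$ has no nontrivial one-dimensional rational representations, so it acts nontrivially on any maximal torus containing $T$), and hence the cocycle defining $G$ can be pushed into $\Aut(\ofaorth(8,-))$ after all. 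Without this step your proof does not cover all semisimple adjoint $G$ of isotropic rank $\geq 3$. (The remaining exceptions of theorem \ref{ofa-aut} --- $\psorth(1,-)^m$, $\psorth(2,-)^m$, $\psorth(4,-)^m$, $\pglin(1,-)^m$, $\pglin(2,-)^m$ --- are excluded by the rank hypothesis $n\geq 3$, which you should at least note.) A smaller point: for a general commutative ring $K$ one cannot simply ``pass to a connected component of $\Spec K$'' as a ring factor; the paper first performs an absolute Noetherian reduction, after which connected components do correspond to factors of $K$.
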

\begin{proof}
By absolute Noetherian reduction we may assume that $K$ is Noetherian, and by decomposition into a product of smaller rings we may assume that $\Spec(K)$ is connected. Similarly, we may assume that $G$ is indecomposable.

Let $G$ be a twisted form of $\psorth(8, -)^m$ for $m \geq 1$ with a split torus $T \leq G$ of rank at least $3$. Locally in the fppf topology $G$ is a subgroup of the projective unitary group scheme of $\ofaorth(8, -)^m$ and the projection of $T$ to each factor $\psorth(8, -)$ still has rank at least $3$. Any lift to $\Aut(\psorth(8, -))$ of the triality cannot centralize the projection of $T$, since it acts nontrivially on a maximal torus containing $T$, but the cyclic group $\mathbb Z / 3 \mathbb Z$ has no nontrivial one-dimensional representations over $\mathbb Q$. If there is a nontrivial outer automorphism of $\psorth(8, -)$ with a lift to $\Aut(\psorth(8, -))$ centralizing $T$, then without loss of generality it is the outer automorphism of $\ofaorth(8, -)$. Hence $G$ may be obtained from a twisted form of $\ofaorth(8, -)^m$.

The rest follows from lemma \ref{hyp-descent} and the construction before it. Note that the assumption $n \geq 3$ implies that we do not have to consider twisted forms of $\ofalin(1, K)^m$, $\ofalin(2, K)^m$, $\ofaorth(1, K)^m$, $\ofaorth(2, K)^m$, and $\ofaorth(4, K)^m$.
\end{proof}

If $G$ is a reductive group scheme over $K$ with the isotropic rank $n \geq 2$, then the main result of \cite{PetrovStavrova} says that there is a well-defined elementary subgroup of $G(K)$. By proposition \ref{parabolic} the elementary subgroup may be defined in terms of unitary transvections if $G$ is constructed by a locally classical odd form $K$-algebra $(R, \Delta, \mathcal D)$) with an orthogonal hyperbolic family of rank at least $1$ (such that $R e_{|i|} R = R$ for each $i$). By theorem \ref{isotropic-classical}, such $(R, \Delta, \mathcal D)$ always exists if $n \geq 3$ and $G$ is of adjoint type.

\bibliographystyle{plain}
\bibliography{references}

\end{document}